\newtheorem{theorem}{Theorem}[section]
\theoremstyle{plain}
\newtheorem{lemma}{Lemma}[section]
\newtheorem{proposition}{Proposition}[section]
\numberwithin{equation}{section}
\begin{document}
\title[Hessian Estimates]{Hessian and gradient estimates for three dimensional special Lagrangian
Equations with large phase}
\author{Micah Warren}
\author{Yu YUAN}
\address{Department of Mathematics, Box 354350\\
University of Washington\\
Seattle, WA 98195}
\email{mwarren@math.washington.edu, yuan@math.washington.edu}
\thanks{Y.Y. is partially supported by an NSF grant.}
\date{\today}

\begin{abstract}
We derive a priori interior Hessian and gradient estimates for special
Lagrangian equation of phase at least a critical value in dimension three.

\end{abstract}
\maketitle

\section{\bigskip Introduction}

In this paper, we establish a priori\emph{ interior} Hessian and gradient
estimates for the special Lagrangian equation%

\begin{equation}
\sum_{i=1}^{n}\arctan\lambda_{i}=\Theta\label{EsLag}%
\end{equation}
with (the phase) $\left\vert \Theta\right\vert \geq\pi/2$ and $n=3,$ where
$\lambda_{i}$ are the eigenvalues of the Hessian $D^{2}u.$

Equation (\ref{EsLag}) is from the special Lagrangian geometry [HL]. The
Lagrangian graph $\left(  x,Du\left(  x\right)  \right)  \subset\mathbb{R}%
^{n}\times\mathbb{R}^{n}$ is called special when the phase or the argument of
the complex number $\left(  1+\sqrt{-1}\lambda_{1}\right)  \cdots\left(
1+\sqrt{-1}\lambda_{n}\right)  $ is constant $\Theta,$ that is, $u$ satisfies
equation (\ref{EsLag}), and it is special if and only if $\left(  x,Du\left(
x\right)  \right)  $ is a (volume minimizing) minimal surface in
$\mathbb{R}^{n}\times\mathbb{R}^{n}$ [HL, Theorem 2.3, Proposition 2.17]. Note
that equation (\ref{EsLag}) with $n=3$ and $\left\vert \Theta\right\vert
=\pi/2$ or $\pi$ also takes the following forms respectively%
\[
\sigma_{2}\left(  D^{2}u\right)  =\lambda_{1}\lambda_{2}+\lambda_{2}%
\lambda_{3}+\lambda_{3}\lambda_{1}=1
\]
or%
\begin{equation}
\bigtriangleup u=\det D^{2}u. \label{s1=s3}%
\end{equation}

We first state the following interior Hessian estimates.

\begin{theorem}
Let $u$ be a smooth solution to (\ref{EsLag}) with $\left\vert \Theta
\right\vert \geq\pi/2$ and $n=3$ on $B_{R}(0)\subset\mathbb{R}^{3}.$ Then we
have
\[
|D^{2}u(0)|\leq C(3)\exp\left[  C(3)\left(  \cot\frac{\left\vert
\Theta\right\vert -\pi/2}{3}\right)  ^{2}\max_{B_{R}(0)}|Du|^{7}/R^{7}\right]
,
\]
and also
\[
|D^{2}u(0)|\leq C(3)\exp\left\{  C(3)\exp\left[  C(3)\max_{B_{R}(0)}%
|Du|^{3}/R^{3}\right]  \right\}  .
\]

\end{theorem}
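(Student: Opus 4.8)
The plan is to adapt the strategy of the Hessian-estimate proofs for the convex-phase special Lagrangian equation to the large-phase regime $\left\vert \Theta\right\vert \ge \pi/2$ in dimension three, exploiting the special geometry of the Lagrangian graph $M = (x, Du(x)) \subset \mathbb{R}^3 \times \mathbb{R}^3$. The key structural fact is that $M$ is a minimal submanifold, so for any smooth function $v$ on $M$ one has the Bochner-type identity $\Delta_M v = $ (ambient Laplacian expressed through the induced metric $g = I + (D^2u)^2$), and the harmonic coordinate functions $x_i$ satisfy nice equations on $M$. The natural quantity to estimate is $b = \log\sqrt{1+\lambda_{\max}^2}$ (or a smoothed version such as the sum over a slack variable), which controls $|D^2u|$; the goal is a mean-value inequality showing $b$ is subharmonic on $M$ up to controllable error, and then to run a Michael–Simon Sobolev inequality / De Giorgi–Nash–Moser iteration on the minimal submanifold $M$ to bound $b(0)$ in terms of an integral of $b$, which in turn is bounded via the volume of $M$ over a ball — and the volume is controlled by $\max |Du|$ since $M$ is a graph over $B_R$ with Lipschitz bound on the gradient direction.

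The first concrete step is to derive the crucial differential inequality: on the minimal surface $M$, the function $b = \log \sqrt{1+\lambda_{\max}^2}$ (handled carefully at points of multiplicity via the standard device of working with $\sum_{i} \log\sqrt{1+\lambda_i^2}$ over the largest eigenvalues or with a test function that is smooth) satisfies $\Delta_M b \ge c(n)\, |\nabla_M b|^2 - C$, where the good negative gradient term is the signature of the large-phase condition. This is where the hypothesis $\left\vert\Theta\right\vert \ge \pi/2$ and $n=3$ enters decisively: the phase constraint $\arctan\lambda_1 + \arctan\lambda_2 + \arctan\lambda_3 = \Theta$ with $|\Theta| \ge \pi/2$ forces a lower bound on the eigenvalues (at most one can be very negative, and there is a quantitative lower bound $\lambda_{\min} \ge -\cot\frac{|\Theta|-\pi/2}{3}$ type estimate) — this convexity-type information on the level set is what makes the third-derivative terms in the Bochner formula have a favorable sign, analogous to Yuan's earlier work but now without full convexity of $u$. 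I expect this differential inequality, and especially tracking the explicit dependence on $\cot\frac{|\Theta|-\pi/2}{3}$, to be the main obstacle: the algebra of the second-fundamental-form terms $\sum h_{ijk}^2$ against the Hessian of $b$ is delicate and the sign is not automatic, so one must use the precise three-dimensional trigonometric identity relating the $\lambda_i$ to the slack in the phase.

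With the differential inequality in hand, the second step is a Sobolev/mean-value argument: because $M$ is minimal in Euclidean space, the Michael–Simon inequality holds on $M$ with a dimensional constant, so from $\Delta_M b \ge -C$ (using only the crude form, discarding the good gradient term if necessary, or keeping it to improve the iteration) a Moser iteration yields $\sup_{M \cap B_\rho} b \le C(n)\left( \rho^{-n}\int_{M\cap B_{2\rho}} b\, dV_g + \rho^2 C\right)$, where $B_\rho$ denotes a Euclidean ball in $\mathbb{R}^6$ centered at $(0, Du(0))$. The third step is to bound the right-hand side: $\int_{M \cap B_{2\rho}} b\, dV_g$ is controlled by a volume bound for $M$ times $\sup b$ on a slightly larger set — but this is circular unless one first obtains a rougher a priori bound, so instead one integrates $b$ against the equation $\Delta_M x_i = 0$ or uses the divergence structure: $b$ integrated over $M$ can be estimated by $\int |\nabla_M b|\cdot|x| \le $ (gradient term from the inequality) which feeds back, OR — the cleaner route — one uses that $\int_{M\cap B_{2\rho}} b\, dV_g \le C \operatorname{Vol}(M \cap B_{2\rho}) \cdot \big(1 + \fint b\big)$ and the volume is $\le C \max|Du|^3$ up to normalization (the graph lies in $B_R \times \{|p| \le \max|Du|\}$, and its volume element $\sqrt{\det g}$ integrates to the volume, bounded by the area formula). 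Combining the Moser estimate with the volume bound and choosing $\rho \sim R/\max|Du|$ (to ensure $B_\rho(0, Du(0))$ projects inside $B_R$) produces the stated double-exponential; the single-exponential bound with the $\cot^2$ factor comes from the sharper iteration that retains the good gradient term $|\nabla_M b|^2$, yielding a Harnack-type improvement that replaces one exponential loss with the explicit $\big(\cot\frac{|\Theta|-\pi/2}{3}\big)^2$ dependence and the power $\max|Du|^7/R^7$ coming from the number of iteration/covering steps needed to pass from the tiny ball in $M$ back to scale $R$.

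Finally, I would assemble the two inequalities by noting they are genuinely two different estimates (the first better when the phase is bounded away from $\pi/2$, i.e. $\cot\frac{|\Theta|-\pi/2}{3}$ is finite and not too large; the second uniform in $\Theta$ down to the critical phase), so the proof presents the iteration twice with the two bookkeeping choices, or — more economically — proves the first and derives the second by a limiting/interpolation argument as $|\Theta| \to \pi/2$. The dimensional restriction $n=3$ should be used exactly once, in the sign computation of Step 1, where the three-term arctan identity is what one can control; I would flag that the method does not obviously extend to $n \ge 4$ for this reason.
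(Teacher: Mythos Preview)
Your overall architecture (Jacobi inequality for $b=\ln\sqrt{1+\lambda_{\max}^2}$, Michael--Simon mean value inequality, volume bound) matches the paper's, but you have mislocated the central difficulty and are missing the two ingredients that actually make the supercritical case work.

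First, the Jacobi inequality is \emph{not} where the $\cot\frac{|\Theta|-\pi/2}{3}$ dependence enters. The paper proves (Proposition 2.1, via Lemmas 2.1--2.3) that $\Delta_g b \ge \tfrac{1}{3}|\nabla_g b|^2$ holds with a universal constant for every $\Theta\ge\pi/2$; there is no error term $-C$ and no phase-dependent constant. So your expected main obstacle is in fact the easy part.

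Second, and this is the genuine gap: after Michael--Simon gives $b(0)\le C\big(\int_{B_1} b^{3/2}\,dv_g\big)^{2/3}$, you need a Sobolev inequality to pass to a gradient integral. In the critical-phase proof (Theorem 1.2) this is done with a cutoff $\varphi$ and the conformality identity $g^{ii}V=\sigma_1-\lambda_i$, which relies on $\sigma_2=1$, i.e.\ on $\Theta=\pi/2$. For $\Theta>\pi/2$ that identity fails and the cutoff terms are not controlled by $\Delta u$. The paper's fix is twofold: (i) a \emph{Lewy rotation} (Proposition 2.2) of $\mathbb{C}^3$ by angle $\delta/3$ re-represents the same minimal surface as a gradient graph $(\bar x, D\bar u)$ with phase exactly $\pi/2$, whose Hessian is automatically bounded by $\cot(\delta/3)$ (or by Theorem~1.2 applied to $\bar u$); (ii) in these rotated coordinates the induced metric is uniformly comparable to Euclidean with factor $\mu=\min\{1+\cot^2(\delta/3),\,1+C\exp[C\kappa^3]\}$, and a \emph{relative isoperimetric inequality} on nested domains (Proposition 2.3) yields a Sobolev inequality for functions \emph{without} compact support (Proposition 4.1). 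This is the step your Moser-iteration sketch glosses over, and it is where both the $\cot^2$ factor and the second exponential genuinely originate: $\mu$ is the two-way constant, and the $\rho^4$ from the isoperimetric step combines with the volume bound $\int V\,dx\le C\|Du\|^3$ to produce $\|Du\|^7$. Your explanations for the exponent $7$ (covering steps) and for the double exponential (choice of $\rho\sim R/\max|Du|$) are not what happens.
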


The above $\Theta$-independent Hessian estimates make use of the following
Hessian estimate for the three dimensional special Lagrangian equation
(\ref{EsLag}) with the critical phase $\left\vert \Theta\right\vert =\pi/2.$

\begin{theorem}
[{[WY2]}]Let $u$ be a smooth solution to (\ref{EsLag}) with $\left\vert
\Theta\right\vert =\pi/2$ and $n=3$ on $B_{R}(0)\subset\mathbb{R}^{3}.$ Then
we have
\[
|D^{2}u(0)|\leq C(3)\exp\left[  C(3)\max_{B_{R}(0)}|Du|^{3}/R^{3}\right]  .
\]

\end{theorem}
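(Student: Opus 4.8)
Replacing $u$ by $-u$ and rescaling, we may assume $R=1$, $B=B_{1}(0)$, $\Theta=\pi/2$; then $u$ solves $\sigma_{2}(D^{2}u)=\lambda_{1}\lambda_{2}+\lambda_{2}\lambda_{3}+\lambda_{3}\lambda_{1}=1$ on the branch where $\sum_{i}\arctan\lambda_{i}=\pi/2$, on which the equation is (degenerate) elliptic and $|\lambda_{i}|\le\lambda_{\max}$ for each $i$. The argument is built on the minimal surface interpretation [HL]: the graph $M=\{(x,Du(x)):x\in B\}\subset\mathbb{R}^{3}\times\mathbb{R}^{3}$ is a minimal (indeed calibrated) submanifold, with induced metric $g_{ij}=\delta_{ij}+\sum_{k}u_{ik}u_{jk}$, volume element $V\,dx$ where $V=\sqrt{\det g}=\prod_{i}\sqrt{1+\lambda_{i}^{2}}=\Delta u-\det D^{2}u>0$, and Laplace--Beltrami operator which in the ambient $x$-coordinates has the first-order-free form $\Delta_{g}=\sum_{i,j}g^{ij}\partial_{ij}$; equivalently the fields $(Vg^{i1})_{i},(Vg^{i2})_{i},(Vg^{i3})_{i}$ are divergence free and the coordinate functions $x_{k},u_{k}$ are $g$-harmonic, $\Delta_{g}x_{k}=\Delta_{g}u_{k}=0$. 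The plan is to bound $\lambda_{\max}(0)$ through a differential inequality on $(M,g)$ for
\[
b=\ln\sqrt{1+\lambda_{\max}^{2}},\qquad\lambda_{\max}=\text{largest eigenvalue of }D^{2}u ,
\]
followed by a mean value argument on the minimal submanifold $M$.

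The heart of the matter is the \emph{Jacobi inequality}
\[
\Delta_{g}b\ \geq\ c(3)\,|\nabla_{g}b|_{g}^{2}\qquad\text{on }M ,
\]
understood in the viscosity/barrier sense, and classically wherever $\lambda_{\max}$ is a simple eigenvalue. One obtains it by differentiating $\sum_{i}\arctan\lambda_{i}=\pi/2$ once and twice along $M$, using the Codazzi relations and the commutation of covariant derivatives together with minimality $H\equiv 0$, and then reorganising the resulting third-order terms so that the negative contributions are dominated. Two features are decisive: the phase being \emph{exactly critical} is what forces the bad cubic (curvature) term to have the favourable sign, and the dimension being \emph{three} is what lets the contribution of the single possibly negative eigenvalue (for $\Theta=\pi/2$ one has $\lambda_{3}=\cot(\arctan\lambda_{1}+\arctan\lambda_{2})$, hence $|\lambda_{3}|\le\lambda_{1}=\lambda_{\max}$) be absorbed against the two remaining directions, so that a genuinely positive constant $c(3)$ survives. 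I expect this computation --- carrying the algebra far enough that positivity is not lost when $\lambda_{3}<0$ --- to be the main obstacle; everything downstream is comparatively soft.

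Granting the Jacobi inequality, the conclusion follows from potential theory on $M$. First, $\Delta_{g}b\geq c(3)|\nabla_{g}b|_{g}^{2}$ gives $\Delta_{g}(-e^{-c(3)b})\geq 0$, so $b$ --- indeed $e^{pb}$ for every $p>0$ --- is $\Delta_{g}$-subharmonic, and testing against $\eta^{2}$ for a Lipschitz cutoff $\eta$ supported in a $g$-ball yields the Caccioppoli bound $\int_{M}\eta^{2}|\nabla_{g}b|_{g}^{2}\,dV_{g}\leq C\int_{M}|\nabla_{g}\eta|_{g}^{2}\,dV_{g}$. Second, one needs an interior volume bound for the graph: writing $V=\Delta u-\det D^{2}u=\partial_{j}\big(u_{j}-\tfrac13 u_{i}w_{ij}\big)$ with $w_{ij}$ the (divergence-free) cofactor matrix of $D^{2}u$, integrating by parts over a sphere $\partial B_{\rho}$, $\rho\in[\tfrac12,\tfrac34]$, chosen by averaging in $\rho$ so that $\int_{\partial B_{\rho}}|D^{2}u|^{2}$ is controlled by its bulk integral, and estimating the latter from the divergence structure of the linearised equation $\partial_{i}\big((\sigma_{2})_{ij}\partial_{j}u_{k}\big)=0$ with $(\sigma_{2})_{ij}=\sigma_{1}\delta_{ij}-u_{ij}$, one obtains
\[
\mathrm{Vol}_{g}\big(M\cap B_{r}(0)\big)=\int_{\{|x|^{2}+|Du(x)|^{2}<r^{2}\}}V\,dx\ \leq\ C(3)\big(1+\max_{B}|Du|\big)^{3},
\]
together with the matching lower bound $\mathrm{Vol}_{g}(M\cap B_{r}(0))\geq c(3)\,r^{3}$ from the monotonicity formula for minimal submanifolds through the origin. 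Finally, apply the Michael--Simon Sobolev inequality on $M\subset\mathbb{R}^{6}$ and Moser iteration to the subharmonic function $b$, using the Caccioppoli bound and the two volume estimates, to get $b(0)\leq C(3)\big(1+\max_{B}|Du|\big)^{3}$, i.e. $\ln\sqrt{1+\lambda_{\max}^{2}(0)}\leq C(3)\big(1+\max_{B}|Du|\big)^{3}$. Since $|D^{2}u(0)|\leq\sqrt{3}\,\lambda_{\max}(0)$, exponentiating and undoing the scaling gives
\[
|D^{2}u(0)|\ \leq\ C(3)\exp\!\Big[C(3)\max_{B_{R}(0)}|Du|^{3}/R^{3}\Big],
\]
as claimed. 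The one delicate point in this last step is that the raw mean value estimate $b(0)\lesssim\rho^{-3}\int_{M\cap B_{\rho}}b\,dV_{g}$ hides $\sup b$ on the right, so the iteration must be organised --- exploiting that $e^{pb}$ is subharmonic for all small $p>0$, or iterating the Caccioppoli inequality directly --- so that only the \emph{volume} of $M\cap B_{\rho}$, and not $\sup_{M\cap B_{\rho}}b$, enters the final bound.
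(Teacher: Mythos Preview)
Your outline is essentially the paper's own strategy: the Jacobi inequality $\Delta_{g}b\geq\tfrac13|\nabla_{g}b|^{2}$ for $b=\ln\sqrt{1+\lambda_{\max}^{2}}$, the resulting Caccioppoli bound, Michael--Simon mean value/Sobolev on the minimal graph, and a volume estimate in terms of $\|Du\|_{L^\infty}$. Three points of execution differ from the paper and are worth noting.

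First, for the volume and gradient-energy bounds the paper does \emph{not} integrate $V=\sigma_{1}-\sigma_{3}$ by parts and then fight to control $\int_{\partial B_{\rho}}|D^{2}u|^{2}$ as you propose; instead it uses the (``conformality'') identity specific to $\sigma_{2}=1$,
\[
\frac{V}{1+\lambda_{i}^{2}}=\sigma_{1}-\lambda_{i},
\]
which gives directly $|\nabla_{g}\varphi|^{2}\,dv_{g}\le C\,\Delta u\,dx$ and hence $\int|\nabla_{g}\varphi|^{2}\,dv_{g}\le C\|Du\|_{L^{\infty}}$; the volume itself is then bounded by one application of the Sobolev inequality, $\int V\le C\big(\int|\nabla_{g}\phi|^{2}\,dv_{g}\big)^{3}\le C\|Du\|^{3}$. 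Your divergence-form route may well close, but the conformality identity avoids the potentially circular step you flagged.

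Second, the paper avoids a full Moser iteration: it observes that $b^{3}$ is (integrally) subharmonic, applies Michael--Simon once to get $b(0)\le C\big(\int_{B_{1}}b^{3}\,dv_{g}\big)^{1/3}$, then uses Sobolev once on $\varphi b^{1/2}$. The residual term $\int b\,|\nabla_{g}\varphi|^{2}\,dv_{g}$ is killed by the elementary pointwise bound $b\le\lambda_{\max}+K\le\Delta u+K$ (since $\lambda_{2}+\lambda_{3}>0$), which converts $\int b$ into $\int\Delta u\le C\|Du\|$; this is precisely the device that breaks the circularity you identify at the end.

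Third, where $\lambda_{1}=\lambda_{2}$ the paper does not invoke a viscosity interpretation but works with the smooth symmetric function $b_{2}=\tfrac12\big(\ln\sqrt{1+\lambda_{1}^{2}}+\ln\sqrt{1+\lambda_{2}^{2}}\big)$, proves $\Delta_{g}b_{2}\ge0$ (and the Jacobi inequality where $\lambda_{1}\equiv\lambda_{2}$), and patches $b_{1}$ and $b_{2}$ together to obtain the Jacobi inequality in \emph{integral} form (Proposition~2.1); this step, and the algebra in Lemmas~2.1--2.3 showing positivity of the bad cubic terms via $\sigma_{2}\ge1$ and $\lambda_{1}+2\lambda_{3}>0$, is indeed where most of the work lies, as you anticipated.
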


In order to link the dependence of Hessian estimates in the above theorems to
the potential $u$ itself, we have the following gradient estimate in general dimensions.

\begin{theorem}
Let $u$ be a smooth solution to (\ref{EsLag}) with $\left\vert \Theta
\right\vert \geq$ $\left(  n-2\right)  \frac{\pi}{2}$ on $B_{3R}%
(0)\subset\mathbb{R}^{n}.$ Then we have
\begin{equation}
\max_{B_{R}(0)}|Du|\leq C\left(  n\right)  \left[  \operatorname*{osc}%
_{B_{3R}\left(  0\right)  }\frac{u}{R}+1\right]  . \label{gradient}%
\end{equation}
\ \ 
\end{theorem}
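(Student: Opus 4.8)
We may take $R=1$ after scaling, and $\Theta\ge (n-2)\pi/2$ after replacing $u$ by $-u$ (which solves (\ref{EsLag}) with phase $-\Theta$). The plan rests on a pointwise algebraic consequence of the phase hypothesis together with a geometric descent argument that exploits it.

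\textbf{Step 1 (near‑convexity of $D^2u$).} First I would show that $\Theta\ge (n-2)\pi/2$ forces $D^2u$ to have at most one negative eigenvalue, with the largest eigenvalue dominating the absolute value of the smallest. Ordering $\lambda_1\ge\cdots\ge\lambda_n$ and using $\arctan\lambda_i<\pi/2$,
\[
\arctan\lambda_1+\arctan\lambda_n\ =\ \Theta-\sum_{2\le i\le n-1}\arctan\lambda_i\ \ge\ \Theta-(n-2)\tfrac{\pi}{2}\ \ge\ 0 ,
\]
so $\arctan\lambda_1\ge-\arctan\lambda_n$; if $\lambda_n<0$ this gives $\lambda_1\ge|\lambda_n|$, and one checks similarly that a second negative eigenvalue is incompatible with $\Theta\ge(n-2)\pi/2$. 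Hence $\Delta u=\sum\lambda_i\ge\lambda_1-|\lambda_n|\ge0$, so $u$ is subharmonic, and moreover $|D^2u|\le C(n)\,\Delta u$. This is the quantitative ``$u$ is almost convex'' input.

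\textbf{Step 2 (descent).} Fix $x_0\in\overline{B_1(0)}$ and write $T=|Du(x_0)|$; the goal is $T\le C(n)(\operatorname{osc}_{B_3(0)}u+1)$. The heuristic is that a subharmonic, near‑convex $u$ must fall off steeply in the direction $-Du(x_0)$ and cannot curve back up without large oscillation. I would make this precise along the gradient flow $\dot\gamma=-Du(\gamma)$, $\gamma(0)=x_0$: there $\frac{d}{dt}u(\gamma)=-|Du(\gamma)|^2\le 0$, while by Step 1
\[
\Big|\tfrac{d}{dt}\log|Du(\gamma)|^2\Big|\ =\ \frac{2\,|(D^2u)(Du,Du)|}{|Du(\gamma)|^2}\ \le\ 2|D^2u(\gamma)|\ \le\ 2C(n)\,\Delta u(\gamma).
\]
If the trajectory, while $|Du(\gamma)|\ge T/2$, either exits $B_3(0)$ or accumulates arc‑length $\ge 2$, then $\operatorname{osc}_{B_3(0)}u\ge u(x_0)-u(\gamma)=\int|Du(\gamma)|^2\,ds\ge\frac T2\int|Du(\gamma)|\,ds\ge T$, and we are done up to the constant. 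Otherwise $|Du(\gamma)|$ decays to $T/2$ after short arc‑length inside $B_3(0)$; I would restart at the new base point and, using that the decay rate of $\log|Du|^2$ is controlled by $\Delta u$ and that the subharmonic $u$ has no interior maximum (so the flow cannot stall), show that the drop of $u$ accumulated over the successive pieces is still $\gtrsim T$ before the trajectory leaves $B_3(0)$. An equivalent bookkeeping is via the convex envelope $\bar u$ of $u$ on $B_2(0)$: on the contact set $\{u=\bar u\}$ one has at once $|Du|=|D\bar u|\le C(n)\operatorname{osc}_{B_3(0)}u$ from the Lipschitz bound for convex functions, and on the complementary ruled region the single negative eigenvalue is exactly what must be used to control $Du$. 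The factor $3$ in (\ref{gradient}) is the room needed to run whichever version of this descent.

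\textbf{The main obstacle.} The genuine difficulty is that $D^2u$ is \emph{not} bounded a priori — bounding it is the content of the Hessian estimates stated above, which are not available at this stage — so a crude comparison with a single downhill line fails: along the descent $u$ may pass through regions where the positive part of $D^2u$ is enormous, so $|Du(\gamma)|$ collapses almost instantly and $u$ curves back toward $u(x_0)$. The phase hypothesis is what rescues the argument: since $D^2u$ has at most one negative eigenvalue, dominated by the largest (Step 1), this curving‑back is confined to a single direction and, quantitatively, cannot happen without a correspondingly large oscillation of $u$. Turning this into the clean linear estimate (\ref{gradient}), with the correct dimensional constant and the loss of a factor $3$ in the radius, is the heart of the proof.
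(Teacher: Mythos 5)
There is a genuine gap, and you effectively concede it yourself in the final paragraph: Step 2 — the heart of the argument — is a heuristic sketch, not a proof. The gradient‑flow descent along $\dot\gamma=-Du(\gamma)$ controls $\frac{d}{dt}\log|Du|^2$ by $|D^2u|$, but $|D^2u|$ (equivalently $\Delta u$) has no a priori bound, and there is no mechanism in the sketch to close the estimate when the flow stalls after an arbitrarily short arc‑length; "restart and bookkeep" is asserted, not demonstrated. The convex‑envelope alternative has the same problem: on the ruled region you say the single negative eigenvalue "must be used to control $Du$," but you never say how. As a minor additional point, the quantitative claim $|D^{2}u|\le C(n)\Delta u$ in Step 1, while correct for $n\ge 3$, fails at $n=2$, $\Theta=0$ (take $\lambda_{1}=-\lambda_{2}\to\infty$, so $\Delta u\equiv 0$), and the theorem as stated includes $n=2$.

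The paper closes the gap by a completely different mechanism. For the critical phase $\Theta=(n-2)\pi/2$ it runs Trudinger's maximum‑principle argument on $w=\eta|Du|+Au^{2}$, $\eta=1-|x|^{2}$, $A=n/\operatorname{osc}u$, with respect to the \emph{linearized} operator $\Delta_{g}=\sum g^{ij}\partial_{ij}$ of the equation; the two structural facts it needs are $\Delta_{g}u\ge 0$ (proved by minimizing $\Gamma(\theta)=\frac12\sum\sin 2\theta_{i}$ on the constraint simplex via Lagrange multipliers) and $\Delta_{g}|Du|\ge 0$ (a direct computation using the minimal‑surface system). Crucially these are statements about $\Delta_{g}$, not the Euclidean $\Delta$, and they exploit the equation; your descent argument works entirely with Euclidean quantities and never uses the PDE beyond the eigenvalue constraint. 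For the supercritical phase $\Theta>(n-2)\pi/2$, the Hessian lower bound $\theta_{i}>\delta-\pi/2$ gives a semiconvexity estimate that is $\delta$‑dependent, and the $\delta$‑dependence is removed by the Lewy rotation (Proposition 2.2) combined with a careful conversion of the oscillation of the rotated potential back to that of $u$. Neither of these two ingredients — the $\Delta_{g}$-subharmonicity maximum principle and the Lewy rotation — appears in your outline, and the obstacle you isolate (no Hessian bound) is exactly what they are designed to circumvent.
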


One quick consequence of the above estimates is a Liouville type result for
global solutions with quadratic growth to (\ref{EsLag}) with $\left\vert
\Theta\right\vert =\pi/2$ and $n=3,$ namely any such a solution must be
quadratic (cf. [Y1], [Y2] where other Liouville type results for convex
solutions to (\ref{EsLag}) and Bernstein type results for global solutions to
(\ref{EsLag}) with $\left\vert \Theta\right\vert >\left(  n-2\right)  \pi/2$
were obtained). Another application is the regularity (analyticity) of the
$C^{0}$ viscosity solutions to (\ref{EsLag}) with $\left\vert \Theta
\right\vert \geq\pi/2$ and $n=3.$

In the 1950's, Heinz [H] derived a Hessian bound for the two dimensional
Monge-Amp\`{e}re type equation including (\ref{EsLag}) with $n=2;$ see also
Pogorelov [P1] for Hessian estimates for these equations including
(\ref{EsLag}) with $\left\vert \Theta\right\vert >\pi/2\ $and $n=2.$ In the
1970's Pogorelov [P2] constructed his famous counterexamples, namely irregular
solutions to three dimensional Monge-Amp\`{e}re equations $\sigma_{3}%
(D^{2}u)=\lambda_{1}\lambda_{2}\lambda_{3}=\det(D^{2}u)=1;$ see
generalizations of the counterexamples for $\sigma_{k}$ equations with
$k\geq3$ in Urbas [U1]. In passing, we also mention Hessian estimates for
solutions with certain strict convexity constraints to Monge-Amp\`{e}re
equations and $\sigma_{k}$ equation ($k\geq2$) by Pogorelov [P2] and Chou-Wang
[CW] respectively using the Pogorelov technique. Urbas [U2][U3], also Bao and
Chen [BC] obtained (pointwise) Hessian estimates in term of certain integrals
of the Hessian, for $\sigma_{k}$ equations and special Lagrangian equation
(1.1) with $n=3,\ \Theta=\pi$ respectively.

A Hessian bound for (\ref{EsLag}) with $n=2$ also follows from an earlier work
by Gregori [G], where Heinz's Jacobian estimate was extended to get a gradient
bound in terms of the heights of the two dimensional minimal surfaces with any
codimension. A gradient estimate for general dimensional and codimensional
minimal graphs with certain constraints on the gradients themselves was
obtained in [W], using an integral method developed for codimension one
minimal graphs. The gradient estimate of Bombieri-De Giorgi-Miranda [BDM] (see
also [T1] [BG] [K]) is by now classic.

The Bernstein-Pogorelov-Korevaar technique was employed to derive Hessian
estimates for (\ref{EsLag}) with certain constraints on the solutions in
[WY1].\ A slightly sharper Hessian estimate for (\ref{EsLag}) with $n=2$ was
obtained by elementary methods in [WY3]. The Hessian estimate for the equation
$\sigma_{2}\left(  D^{2}u\right)  =1$ in dimension three, or (\ref{EsLag})
with $\left\vert \Theta\right\vert =\pi/2$ and $n=3$ was derived by
\textquotedblleft less\textquotedblright\ involved arguments in [WY2].

The heuristic ideas for Hessian estimates are as follows. The function
$b=\ln\sqrt{1+\lambda_{\max}^{2}}$ is subharmonic so that $b$ at any point is
bounded by its integral over a ball around the point on the minimal surface by
Michael-Simon's mean value inequality [MS]. This special choice of $b$ is not
only subharmonic, but even stronger, satisfies a Jacobi inequality. Coupled
with Sobolev inequalities for functions both with and without compact support,
this Jacobi inequality leads to a bound on the integral of $b$ by the volume
of the ball on the minimal surface. Taking advantage of the divergence form of
the volume element of the minimal Lagrangian graph, we bound the volume in
terms of the height of the special Lagrangian graph, which is the gradient of
the solution to equation (\ref{EsLag}).

As for the gradient estimates, we adapt Trudinger's method [T2] for
$\sigma_{k}$ equations to (\ref{EsLag}) with the critical phase $\Theta
=\left(  n-2\right)  \pi/2.$ Gradient estimates for (\ref{EsLag}) with larger
phase $\Theta>\left(  n-2\right)  \pi/2$ are straightforward consequences of
the observation that the Hessians of solutions have lower bound depending on
the phase $\Theta.$ In order to obtain the uniform gradient estimates
independent of the phase $\Theta,$ we make use of the Lewy rotation, which
links the corresponding estimates to the ones in the case of the critical phase.

Lewy rotation is also used along with a relative isoperimetric inequality to
get another key ingredient in our proof of the Hessian estimates, namely a
Sobolev inequality for functions without compact support, in the super
critical phase case.

As one can see, our arguments for the Hessian estimates resemble the
\textquotedblleft isoperimetric\textquotedblright\ proof of the classical
gradient estimate for minimal graphs. Now only some technical obstacles remain
for Hessian estimates for (\ref{EsLag}) with large phase $\left\vert
\Theta\right\vert \geq\left(  n-2\right)  \pi/2$ and $n\geq4.$ Yet further new
ideas are lacking for us to handle the special Lagrangian equation
(\ref{EsLag}) with general phases in dimension three and higher, including
(\ref{s1=s3}) corresponding to $\Theta=0$ and $n=3.$

\textbf{Notation. }$\partial_{ij}=\frac{\partial^{2}}{\partial x_{i}\partial
x_{j}},u_{i}=\partial_{ij}u,$ etc., but $\lambda_{1},\cdots,\lambda_{n}$ and
$b_{1}=\ln\sqrt{1+\lambda_{1}^{2}}$, $b_{2}=\left(  \ln\sqrt{1+\lambda_{1}%
^{2}}+\ln\sqrt{1+\lambda_{2}^{2}}\right)  /2\ $do not represent the partial
derivatives. The eigenvalues are ordered $\lambda_{1}\geq\cdots\geq\lambda
_{n}$ and $\theta_{i}=\arctan\lambda_{i}.$ Further, $h_{ijk}$ will denote (the
second fundamental form)
\[
h_{ijk}=\frac{1}{\sqrt{1+\lambda_{i}^{2}}}\frac{1}{\sqrt{1+\lambda_{j}^{2}}%
}\frac{1}{\sqrt{1+\lambda_{k}^{2}}}u_{ijk}.
\]
when $D^{2}u$ is diagonalized. Finally $C\left(  n\right)  $ will denote
various constants depending only on dimension $n.$

\section{Preliminaries}

Taking the gradient of both sides of the special Lagrangian equation
(\ref{EsLag}), we have
\begin{equation}
\sum_{i,j}^{n}g^{ij}\partial_{ij}\left(  x,Du\left(  x\right)  \right)  =0,
\label{Emin}%
\end{equation}
where $\left(  g^{ij}\right)  $ is the inverse of the induced metric
$g=\left(  g_{ij}\right)  =I+D^{2}uD^{2}u$ on the surface $\left(  x,Du\left(
x\right)  \right)  \subset\mathbb{R}^{n}\times\mathbb{R}^{n}.$ Simple
geometric manipulation of (\ref{Emin}) yields the divergence form of the
minimal surface equation
\[
\bigtriangleup_{g}\left(  x,Du\left(  x\right)  \right)  =0,
\]
where the Laplace-Beltrami operator of the metric $g$ is given by
\[
\bigtriangleup_{g}=\frac{1}{\sqrt{\det g}}\sum_{i,j}^{n}\partial_{i}\left(
\sqrt{\det g}g^{ij}\partial_{j}\right)  .
\]
Because we are using harmonic coordinates $\bigtriangleup_{g}x=0,$ we see that
$\bigtriangleup_{g}$ also equals the linearized operator of the special
Lagrangian equation (\ref{EsLag}) at $u,$%
\[
\bigtriangleup_{g}=\sum_{i,j}^{n}g^{ij}\partial_{ij}.
\]
The gradient and inner product with respect to the metric $g$ are
\begin{align*}
\nabla_{g}v  &  =\left(  \sum_{k=1}^{n}g^{1k}v_{k},\cdots,\sum_{k=1}^{n}%
g^{nk}v_{k}\right)  ,\\
\left\langle \nabla_{g}v,\nabla_{g}w\right\rangle _{g}  &  =\sum_{i,j=1}%
^{n}g^{ij}v_{i}w_{j},\ \ \text{in particular \ }\left\vert \nabla
_{g}v\right\vert ^{2}=\left\langle \nabla_{g}v,\nabla_{g}v\right\rangle _{g}.
\end{align*}

\subsection{Jacobi inequality}

We begin with some geometric calculations.

\begin{lemma}
Let $u$ be a smooth solution to (\ref{EsLag}). Suppose that the Hessian
$D^{2}u$ is diagonalized and the eigenvalue $\lambda_{1}$ is distinct from all
other eigenvalues of $D^{2}u$ at point $p.$ Set $b_{1}=\ln\sqrt{1+\lambda
_{1}^{2}}$ near $p.$ \ Then we have at $p$%
\begin{equation}
\left\vert \nabla_{g}b_{1}\right\vert ^{2}=\sum_{k=1}^{n}\lambda_{1}%
^{2}h_{11k}^{2} \label{gradientb1}%
\end{equation}
and
\begin{gather}
\bigtriangleup_{g}b_{1}=\nonumber\\
(1+\lambda_{1}^{2})h_{111}^{2}+\sum_{k>1}\left(  \frac{2\lambda_{1}}%
{\lambda_{1}-\lambda_{k}}+\frac{2\lambda_{1}^{2}\lambda_{k}}{\lambda
_{1}-\lambda_{k}}\right)  h_{kk1}^{2}\label{lapb1a}\\
+\sum_{k>1}\left[  1+\frac{2\lambda_{1}}{\lambda_{1}-\lambda_{k}}%
+\frac{\lambda_{1}^{2}\left(  \lambda_{1}+\lambda_{k}\right)  }{\lambda
_{1}-\lambda_{k}}\right]  h_{11k}^{2}\label{lapb1b}\\
+\sum_{k>j>1}2\lambda_{1}\left[  \frac{1+\lambda_{k}^{2}}{\lambda_{1}%
-\lambda_{k}}+\frac{1+\lambda_{j}^{2}}{\lambda_{1}-\lambda_{j}}+(\lambda
_{j}+\lambda_{k})\right]  h_{kj1}^{2}. \label{lapb1c}%
\end{gather}

\end{lemma}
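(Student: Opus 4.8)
The plan is to compute $\nabla_g b_1$ and $\bigtriangleup_g b_1$ directly in the diagonalizing coordinates at $p$, using the structure of the special Lagrangian equation. First I would record the basic derivative formulas. Since $D^2u$ is diagonalized at $p$ and $\lambda_1$ is simple, $b_1 = \ln\sqrt{1+\lambda_1^2}$ is smooth near $p$; differentiating, $\partial_k b_1 = \frac{\lambda_1}{1+\lambda_1^2}\,\partial_k\lambda_1 = \frac{\lambda_1}{1+\lambda_1^2}\,u_{11k}$, where the last equality uses the standard first-variation formula for a simple eigenvalue, $\partial_k\lambda_1 = u_{11k}$ at a point where $D^2u$ is diagonal. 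Converting to the second fundamental form via $u_{11k} = \sqrt{1+\lambda_1^2}\sqrt{1+\lambda_1^2}\sqrt{1+\lambda_k^2}\,h_{11k}$ and using $g^{kk} = (1+\lambda_k^2)^{-1}$, $g^{ij}=0$ for $i\ne j$ at $p$, one gets
\[
|\nabla_g b_1|^2 = \sum_k g^{kk}(\partial_k b_1)^2 = \sum_k \frac{1}{1+\lambda_k^2}\cdot\frac{\lambda_1^2}{(1+\lambda_1^2)^2}\,u_{11k}^2 = \sum_k \lambda_1^2 h_{11k}^2,
\]
which is \eqref{gradientb1}.

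Next I would compute the Laplace–Beltrami operator. Write $\bigtriangleup_g b_1 = \sum_{i,j} g^{ij}\partial_{ij} b_1$ (valid because we are in harmonic coordinates, as the excerpt notes), and expand
\[
\partial_{ij} b_1 = \frac{\lambda_1}{1+\lambda_1^2}\,\partial_{ij}\lambda_1 + \frac{1-\lambda_1^2}{(1+\lambda_1^2)^2}\,\partial_i\lambda_1\,\partial_j\lambda_1.
\]
The zeroth-order-looking term $\frac{1-\lambda_1^2}{(1+\lambda_1^2)^2}\sum_k g^{kk}(\partial_k\lambda_1)^2$ is elementary. The real work is the Hessian of the eigenvalue, $\sum_{i,j}g^{ij}\partial_{ij}\lambda_1$. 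For this I would use the second-variation formula for a simple eigenvalue of a symmetric matrix: at a point where $D^2u$ is diagonal,
\[
\partial_{ij}\lambda_1 = u_{11ij} + 2\sum_{k>1}\frac{u_{1ki}\,u_{1kj}}{\lambda_1-\lambda_k}.
\]
Then $\sum_{i,j}g^{ij}u_{11ij}$ is handled by differentiating the equation \eqref{EsLag} twice: from $\sum_i \theta_i$ constant one gets $\sum_{i}\frac{1}{1+\lambda_i^2}\partial_{11}\lambda_i = \sum_i \frac{\lambda_i}{(1+\lambda_i^2)^2}(\partial_1\lambda_i)^2$ after commuting derivatives past the smooth operator, i.e. $\bigtriangleup_g u_{11}$ can be re-expressed; this is the usual manipulation giving $\sum g^{ij}u_{11ij} = \sum_i g^{ii}g^{ii}\lambda_i u_{1ii}^2 + (\text{lower terms})$ once one accounts for $\partial_k g^{ij}$. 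Collecting the eigenvalue-Hessian term, the first-order squared terms from both sources, and regrouping the $h_{ijk}$'s by index type — diagonal $h_{111}$, mixed $h_{kk1}$, mixed $h_{11k}$, and fully mixed $h_{kj1}$ with $k>j>1$ — and using $\lambda_1\lambda_k$ and $\lambda_j+\lambda_k$ combinations to simplify, one arrives at the four sums \eqref{lapb1a}–\eqref{lapb1c}.

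The main obstacle is bookkeeping rather than any conceptual difficulty: correctly differentiating $g^{ij}$ (which is \emph{not} constant near $p$, only diagonal \emph{at} $p$) contributes extra $h_{ijk}^2$ terms, and the second-variation terms $\frac{u_{1ki}u_{1kj}}{\lambda_1-\lambda_k}$ must be paired carefully with the $\frac{1}{\lambda_1-\lambda_k}$ pieces coming from $\partial_k g^{11}$ and from commuting $\bigtriangleup_g$ with $\partial_{11}$. One should also verify that all apparent contributions involving $h_{ijk}$ with three distinct indices none equal to $1$ cancel (they do not appear on the right side), which is a genuine check on the algebra. I would organize the computation by first writing everything in terms of $u_{ijk}$ with the metric weights kept symbolic, then substituting $h_{ijk}$ at the very end, and finally splitting the $k>1$ sums into the coefficient of $h_{kk1}^2$, of $h_{11k}^2$, and (for the double sum) of $h_{kj1}^2$, matching \eqref{lapb1a}, \eqref{lapb1b}, \eqref{lapb1c} respectively; the coefficient of $h_{111}^2$ is just $1+\lambda_1^2$ from the two $h_{111}$ contributions combining.
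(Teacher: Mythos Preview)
Your approach is correct and essentially identical to the paper's: compute the first and second variation of the simple eigenvalue, use the differentiated equation to re-express the fourth-order term $\sum_\gamma g^{\gamma\gamma}\partial_{\gamma\gamma}u_{11}$, then regroup by index type. One clarification on the muddled middle step: the clean route (and the paper's) is to differentiate the minimal surface equation $\sum_{\alpha,\beta}g^{\alpha\beta}u_{j\alpha\beta}=0$ once in $x_i$, yielding $\bigtriangleup_g u_{11}=\sum_{\alpha,\beta}g^{\alpha\alpha}g^{\beta\beta}(\lambda_\alpha+\lambda_\beta)\,u_{\alpha\beta 1}^{2}$, rather than differentiating the scalar equation $\sum_i\theta_i=\Theta$ twice; also, terms $h_{ijk}$ with no index equal to $1$ never arise in this computation, so your proposed cancellation check is unnecessary.
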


\begin{proof}
We first compute the derivatives of the smooth function $b_{1}$ near $p.$ We
may implicitly differentiate the characteristic equation
\[
\det(D^{2}u-\lambda_{1}I)=0
\]
near any point where $\lambda_{1}$ is distinct from the other eigenvalues.
Then we get at $p$
\begin{align*}
\partial_{e}\lambda_{1}  &  =\partial_{e}u_{11},\\
\partial_{ee}\lambda_{1}  &  =\partial_{ee}u_{11}+\sum_{k>1}2\frac{\left(
\partial_{e}u_{1k}\right)  ^{2}}{\lambda_{1}-\lambda_{k}},
\end{align*}
with arbitrary unit vector $e\in\mathbb{R}^{n}.$

Thus we have (\ref{gradientb1}) at $p$%
\[
|\nabla_{g}b_{1}|^{2}=\sum_{k=1}^{n}g^{kk}\left(  \frac{\lambda_{1}}%
{1+\lambda_{1}^{2}}\partial_{k}u_{11}\right)  ^{2}=\sum_{k=1}^{n}\lambda
_{1}^{2}h_{11k}^{2},
\]
where we used the notation $h_{ijk}=\sqrt{g^{ii}}\sqrt{g^{jj}}\sqrt{g^{kk}%
}u_{ijk}.$

From
\[
\partial_{ee}b_{1}=\partial_{ee}\ln\sqrt{1+\lambda_{1}^{2}}=\frac{\lambda_{1}%
}{1+\lambda_{1}^{2}}\partial_{ee}\lambda_{1}+\frac{1-\lambda_{1}^{2}}{\left(
1+\lambda_{1}^{2}\right)  ^{2}}\left(  \partial_{e}\lambda_{1}\right)  ^{2},
\]
we conclude that at $p$
\[
\partial_{ee}b_{1}=\frac{\lambda_{1}}{1+\lambda_{1}^{2}}\left[  \partial
_{ee}u_{11}+\sum_{k>1}2\frac{\left(  \partial_{e}u_{1k}\right)  ^{2}}%
{\lambda_{1}-\lambda_{k}}\right]  +\frac{1-\lambda_{1}^{2}}{\left(
1+\lambda_{1}^{2}\right)  ^{2}}\left(  \partial_{e}u_{11}\right)  ^{2},
\]
and
\begin{align}
\bigtriangleup_{g}b_{1}  &  =\sum_{\gamma=1}^{n}g^{\gamma\gamma}%
\partial_{\gamma\gamma}b_{1}\nonumber\\
&  =\sum_{\gamma=1}^{n}g^{\gamma\gamma}\frac{\lambda_{1}}{1+\lambda_{1}^{2}%
}\left(  \partial_{\gamma\gamma}u_{11}+\sum_{k>1}2\frac{\left(  u_{1k\gamma
}\right)  ^{2}}{\lambda_{1}-\lambda_{k}}\right)  +\sum_{\gamma=1}^{n}%
\frac{1-\lambda_{1}^{2}}{\left(  1+\lambda_{1}^{2}\right)  ^{2}}%
g^{\gamma\gamma}u_{11\gamma}^{2}. \label{E4thorder}%
\end{align}

Next we substitute the fourth order derivative terms $\partial_{\gamma\gamma
}u_{11}$ in the above by lower order derivative terms.\ Differentiating the
minimal surface equation (\ref{Emin}) $\sum_{\alpha,\beta=1}^{n}g^{\alpha
\beta}u_{j\alpha\beta}=0,$\ we obtain
\begin{align}
\bigtriangleup_{g}u_{ij}  &  =\sum_{\alpha,\beta=1}^{n}g^{\alpha\beta
}u_{ji\alpha\beta}=\sum_{\alpha,\beta=1}^{n}-\partial_{i}g^{_{_{\alpha\beta}}%
}u_{j\alpha\beta}=\sum_{\alpha,\beta,\gamma,\delta=1}^{n}g^{\alpha\gamma
}\partial_{i}g_{\gamma\delta}g^{\delta\beta}u_{j\alpha\beta}\nonumber\\
&  =\sum_{\alpha,\beta=1}^{n}g^{\alpha\alpha}g^{\beta\beta}(\lambda_{\alpha
}+\lambda_{\beta})u_{\alpha\beta i}u_{\alpha\beta j}, \label{lapujj}%
\end{align}
where we used
\[
\partial_{i}g_{_{\gamma\delta}}=\partial_{i}(\delta_{\gamma\delta}%
+\sum_{\varepsilon=1}^{n}u_{\gamma\varepsilon}u_{\varepsilon\delta}%
)=u_{\gamma\delta i}(\lambda_{\gamma}+\lambda_{\delta})
\]
with diagonalized $D^{2}u.$ Plugging (\ref{lapujj}) with $i=j=1$ in
(\ref{E4thorder}), we have at $p$%

\begin{align*}
\bigtriangleup_{g}b_{1}  &  =\frac{\lambda_{1}}{1+\lambda_{1}^{2}}\left[
\sum_{\alpha,\beta=1}^{n}g^{\alpha\alpha}g^{\beta\beta}(\lambda_{\alpha
}+\lambda_{\beta})u_{\alpha\beta1}^{2}+\sum_{\gamma=1}^{n}\sum_{k>1}%
2\frac{u_{1k\gamma}^{2}}{\lambda_{1}-\lambda_{k}}g^{\gamma\gamma}\right] \\
&  +\sum_{\gamma=1}^{n}\frac{1-\lambda_{1}^{2}}{\left(  1+\lambda_{1}%
^{2}\right)  ^{2}}g^{\gamma\gamma}u_{11\gamma}^{2}\\
&  =\lambda_{1}\sum_{\alpha,\beta=1}^{n}(\lambda_{\alpha}+\lambda_{\beta
})h_{\alpha\beta1}^{2}+\sum_{k>1}\sum_{\gamma=1}^{n}\frac{2\lambda_{1}\left(
1+\lambda_{k}^{2}\right)  }{\lambda_{1}-\lambda_{k}}h_{\gamma k1}^{2}%
+\sum_{\gamma=1}^{n}\left(  1-\lambda_{1}^{2}\right)  h_{11\gamma}^{2},
\end{align*}
where we used the notation $h_{ijk}=\sqrt{g^{ii}}\sqrt{g^{jj}}\sqrt{g^{kk}%
}u_{ijk}.$ Regrouping those terms $h_{\heartsuit\heartsuit1},\ h_{11\heartsuit
},$ and $h_{\heartsuit\clubsuit1}$ in the last expression, we have
\begin{gather*}
\bigtriangleup_{g}b_{1}=\left(  1-\lambda_{1}^{2}\right)  h_{111}^{2}%
+\sum_{\alpha=1}^{n}2\lambda_{1}\lambda_{\alpha}h_{\alpha\alpha1}^{2}%
+\sum_{k>1}\frac{2\lambda_{1}\left(  1+\lambda_{k}^{2}\right)  }{\lambda
_{1}-\lambda_{k}}h_{kk1}^{2}\\
+\sum_{k>1}2\lambda_{1}(\lambda_{k}+\lambda_{1})h_{k11}^{2}+\sum_{k>1}\left(
1-\lambda_{1}^{2}\right)  h_{11k}^{2}+\sum_{k>1}\frac{2\lambda_{1}\left(
1+\lambda_{k}^{2}\right)  }{\lambda_{1}-\lambda_{k}}h_{1k1}^{2}\\
+\sum_{k>j>1}2\lambda_{1}(\lambda_{j}+\lambda_{k})h_{jk1}^{2}+\sum
_{\substack{j,k>1,\\j\neq k}}\frac{2\lambda_{1}\left(  1+\lambda_{k}%
^{2}\right)  }{\lambda_{1}-\lambda_{k}}h_{jk1}^{2}.
\end{gather*}
After simplifying the above expression, we have the second formula in Lemma 2.1.
\end{proof}

\begin{lemma}
Let $u$ be a smooth solution to (\ref{EsLag}) with $n=3$ and $\Theta\geq
\pi/2.$ Suppose that the ordered eigenvalues $\lambda_{1}\geq\lambda_{2}%
\geq\lambda_{3}$ of the Hessian $D^{2}u$ satisfy $\lambda_{1}>\lambda_{2}$ at
point $p.$ Set
\[
b_{1}=\ln\sqrt{1+\lambda_{\max}^{2}}=\ln\sqrt{1+\lambda_{1}^{2}}%
\]
Then we have at $p$%
\begin{equation}
\bigtriangleup_{g}b_{1}\geq\frac{1}{3}|\nabla_{g}b_{1}|^{2}. \label{Jacobi-3d}%
\end{equation}

\end{lemma}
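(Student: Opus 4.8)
The plan is to reduce (\ref{Jacobi-3d}) to a pointwise algebraic inequality among the third order quantities $h_{ij1}$ and the eigenvalues $\lambda_1>\lambda_2\ge\lambda_3$, exploiting the one linear relation forced by the equation. Specializing Lemma 2.1 to $n=3$, the right side is $|\nabla_g b_1|^2=\lambda_1^2(h_{111}^2+h_{112}^2+h_{113}^2)$, while $\bigtriangleup_g b_1$ is the sum (\ref{lapb1a})--(\ref{lapb1c}), which is diagonal in the six quantities $h_{111},h_{221},h_{331},h_{112},h_{113},h_{231}$. Differentiating (\ref{EsLag}) in the $x_1$ direction (equivalently, tracing the minimal surface equation (\ref{Emin})) gives $h_{111}+h_{221}+h_{331}=0$. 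Substituting $h_{111}=-(h_{221}+h_{331})$, the desired inequality $\bigtriangleup_g b_1-\tfrac13|\nabla_g b_1|^2\ge0$ becomes the positivity of a quadratic form consisting of a $2\times2$ block in $(h_{221},h_{331})$ assembled from the $(1+\lambda_1^2)h_{111}^2$, $h_{221}^2$ and $h_{331}^2$ terms of (\ref{lapb1a}) minus $\tfrac13\lambda_1^2 h_{111}^2$, together with three scalar blocks from the $h_{112}^2$, $h_{113}^2$ and $h_{231}^2$ terms.

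Before checking positivity I would record the sign information forced by $\Theta\ge\pi/2$. With $\theta_i=\arctan\lambda_i\in(-\tfrac\pi2,\tfrac\pi2)$ ordered, for any pair $\theta_i+\theta_j=\Theta-\theta_k>\tfrac\pi2-\tfrac\pi2=0$, so every pairwise sum $\lambda_i+\lambda_j>0$; in particular $\lambda_1,\lambda_2>0$, and since $\lambda_1>\lambda_2$ also $\lambda_1>\lambda_3$. Then I split into two cases. If $\lambda_3\ge0$ every coefficient in $\bigtriangleup_g b_1-\tfrac13|\nabla_g b_1|^2$ is manifestly nonnegative: the $h_{111}^2$ coefficient is $1+\tfrac23\lambda_1^2$, each $h_{kk1}^2$ coefficient equals $\tfrac{2\lambda_1(1+\lambda_1\lambda_k)}{\lambda_1-\lambda_k}$, each $h_{11k}^2$ coefficient exceeds $\tfrac13\lambda_1^2$ since $\tfrac{\lambda_1+\lambda_k}{\lambda_1-\lambda_k}\ge\tfrac13$, and the $h_{231}^2$ coefficient is nonnegative because $\lambda_2+\lambda_3\ge0$; so the linear relation is not even needed here.

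The substantive case is $\lambda_3<0$. Then $\theta_1+\theta_2=\Theta-\theta_3>\tfrac\pi2$ with $\theta_1,\theta_2\in(0,\tfrac\pi2)$, so $\lambda_1\lambda_2>1$ and hence $\lambda_1>1$; moreover $\theta_1+\theta_2\ge\tfrac\pi2-\theta_3$ rewrites, on taking cotangents, as $\lambda_3\ge\frac{1-\lambda_1\lambda_2}{\lambda_1+\lambda_2}$, i.e. $\sigma_2(D^2u)=\lambda_1\lambda_2+\lambda_2\lambda_3+\lambda_3\lambda_1\ge1$, and since $2(1-\lambda_1\lambda_2)+\lambda_1(\lambda_1+\lambda_2)=2+\lambda_1(\lambda_1-\lambda_2)>0$ this also gives $\lambda_3\ge-\tfrac12\lambda_1$, i.e. $\lambda_1+2\lambda_3\ge0$. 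With these facts the three scalar blocks stay nonnegative as before (the $h_{113}^2$ one using $\lambda_1+2\lambda_3\ge0$), so everything reduces to the $2\times2$ block, i.e. to its lower-right entry $1+\tfrac23\lambda_1^2+\tfrac{2\lambda_1(1+\lambda_1\lambda_3)}{\lambda_1-\lambda_3}$ and its determinant being nonnegative. After clearing the positive denominator $(\lambda_1-\lambda_2)(\lambda_1-\lambda_3)$ the determinant condition collapses to a polynomial inequality of the shape $12\lambda_1+4\lambda_1^3+(2\lambda_1^4+7\lambda_1^2-3)(\lambda_2+\lambda_3)+2\lambda_1(\lambda_1^2-3)\lambda_2\lambda_3\ge0$; as $\lambda_1>1$, all coefficients but possibly that of $\lambda_2\lambda_3$ are positive, so when $\lambda_1^2\le3$ the remaining term is nonnegative because $\lambda_2\lambda_3<0$, and when $\lambda_1^2>3$ one inserts $\lambda_2\lambda_3\ge1-\lambda_1(\lambda_2+\lambda_3)$ and the inequality becomes $6\lambda_1+6\lambda_1^3+(13\lambda_1^2-3)(\lambda_2+\lambda_3)\ge0$, which is clear.

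The hard part will be precisely this last step: arranging the algebra so that the $2\times2$-block determinant reduces to a polynomial inequality closable using only the equation-forced bounds $\sigma_2\ge1$, $\lambda_1>1$ and $\lambda_2+\lambda_3>0$, and getting the sign of the $\lambda_2\lambda_3$-coefficient and the threshold $\lambda_1^2=3$ right. A secondary point is justifying (e.g. from the argument of $(1+\sqrt{-1}\lambda_1)(1+\sqrt{-1}\lambda_2)(1+\sqrt{-1}\lambda_3)$) that $\Theta\ge\pi/2$ is equivalent to $\sigma_2\ge1$, though in the case split above this is extracted directly from the arctangent relations.
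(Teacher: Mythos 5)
Your proof is correct, and the underlying ingredients are the same as the paper's (discard the positive (\ref{lapb1c}) sum, establish $\lambda_1+2\lambda_3\ge0$, invoke the trace relation $h_{111}+h_{221}+h_{331}=0$ and $\sigma_2\ge1$), but the bookkeeping is genuinely different and somewhat heavier. The paper first prunes the formula harder: from (\ref{lapb1a})--(\ref{lapb1b}) it discards the terms $\frac{2\lambda_1}{\lambda_1-\lambda_k}h_{kk1}^2$, the constant $1$ from $(1+\lambda_1^2)h_{111}^2$, and the $1+\frac{2\lambda_1}{\lambda_1-\lambda_k}$ part of the $h_{11k}^2$ coefficient, arriving at the clean form $\lambda_1^2\bigl(\tfrac23 h_{111}^2+\sum_{k>1}\tfrac{2\lambda_k}{\lambda_1-\lambda_k}h_{kk1}^2\bigr)+\lambda_1^2\sum_{k>1}\tfrac{2(\lambda_1+2\lambda_k)}{3(\lambda_1-\lambda_k)}h_{11k}^2$. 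It then eliminates $h_{331}$ (not $h_{111}$) via the trace relation and applies Cauchy--Schwarz; the decisive factor reduces in one line to $\frac{\sigma_2}{(\lambda_1-\lambda_3)\lambda_2}$, so only $\sigma_2>0$ is actually needed. You instead retain the full coefficients $\frac{2\lambda_1(1+\lambda_1\lambda_k)}{\lambda_1-\lambda_k}$, eliminate $h_{111}$, and compute the $2\times2$ determinant $a(b+c)+bc$; I checked that after dividing by $\tfrac{2\lambda_1}{3}$ this is exactly your cubic
$12\lambda_1+4\lambda_1^3+(2\lambda_1^4+7\lambda_1^2-3)(\lambda_2+\lambda_3)+2\lambda_1(\lambda_1^2-3)\lambda_2\lambda_3$,
and that your case split at $\lambda_1^2=3$ together with $\sigma_2\ge1$ and $\lambda_2+\lambda_3>0$ closes it (the substitution $\lambda_2\lambda_3\ge1-\lambda_1(\lambda_2+\lambda_3)$ collapses it to $6\lambda_1+6\lambda_1^3+(13\lambda_1^2-3)(\lambda_2+\lambda_3)\ge0$). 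So your route works, but at the cost of carrying extra positive terms into the determinant and of needing the stronger $\sigma_2\ge1$ rather than $\sigma_2>0$; the paper's aggressive pruning buys a one-line finish with no case split. One small caveat to flag if you write this up: you must also verify a diagonal entry of the $2\times2$ block is nonnegative, since the determinant alone does not imply positive semidefiniteness; you note the lower-right entry, and it indeed equals $\frac{3\lambda_1-\lambda_3+\frac23\lambda_1^2(\lambda_1+2\lambda_3)}{\lambda_1-\lambda_3}>0$ by $\lambda_1+2\lambda_3\ge0$, so this is fine but should be stated.
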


\begin{proof}
[Proof ]We assume that the Hessian $D^{2}u$ is diagonalized at point $p.$

Step 1. Recall $\theta_{i}=\arctan\lambda_{i}\in(-\pi/2,\pi/2)$ and
\ $\theta_{1}+\theta_{2}+\theta_{3}=\Theta\geq\pi/2.$ It is easy to see that
$\theta_{1}\geq\theta_{2}>0$ and $\theta_{i}+\theta_{j}\geq0$ for any pair.
Consequently $\lambda_{1}\geq\lambda_{2}>0\ $and$\ \lambda_{i}+\lambda_{j}%
\geq0$ for any pair of distinct eigenvalues. It follows that (\ref{lapb1c}) in
the formula for $\bigtriangleup_{g}b_{1}$ is positive, then from
(\ref{lapb1a}) and (\ref{lapb1b}) we have the inequality\
\begin{equation}
\bigtriangleup_{g}b_{1}\geq\lambda_{1}^{2}\left(  h_{111}^{2}+\sum_{k>1}%
\frac{2\lambda_{k}}{\lambda_{1}-\lambda_{k}}h_{kk1}^{2}\right)  +\lambda
_{1}^{2}\sum_{k>1}\left(  1+\frac{2\lambda_{k}}{\lambda_{1}-\lambda_{k}%
}\right)  h_{11k}^{2}. \label{cor21z}%
\end{equation}

Combining (\ref{cor21z}) and (\ref{gradientb1}) gives \
\begin{gather}
\bigtriangleup_{g}b_{1}-\frac{1}{3}|\nabla_{g}b_{1}|^{2}\geq\nonumber\\
\lambda_{1}^{2}\left(  \frac{2}{3}h_{111}^{2}+\sum_{k>1}\frac{2\lambda_{k}%
}{\lambda_{1}-\lambda_{k}}h_{kk1}^{2}\right)  +\lambda_{1}^{2}\sum_{k>1}%
\frac{2\left(  \lambda_{1}+2\lambda_{k}\right)  }{3\left(  \lambda_{1}%
-\lambda_{k}\right)  }h_{11k}^{2}. \label{cor21a}%
\end{gather}

Step 2. \ We show that the last term in (\ref{cor21a}) is nonnegative. Note
that $\lambda_{1}+2\lambda_{k}\geq\lambda_{1}+2\lambda_{3}.$ We only need to
show that $\lambda_{1}+2\lambda_{3}\geq0$ in the case that $\lambda_{3}<0$ or
equivalently $\theta_{3}<0.$ From $\theta_{1}+\theta_{2}+\theta_{3}=\Theta
\geq$ $\pi/2,$ we have
\[
\frac{\pi}{2}>\theta_{3}+\frac{\pi}{2}=\left(  \frac{\pi}{2}-\theta
_{1}\right)  +\left(  \frac{\pi}{2}-\theta_{2}\right)  +\Theta-\frac{\pi}%
{2}\geq2\left(  \frac{\pi}{2}-\theta_{1}\right)  .
\]
It follows that
\[
-\frac{1}{\lambda_{3}}=\tan\left(  \theta_{3}+\frac{\pi}{2}\right)
>2\tan\left(  \frac{\pi}{2}-\theta_{1}\right)  =\frac{2}{\lambda_{1}},
\]
then
\begin{equation}
\lambda_{1}+2\lambda_{3}>0. \label{l_1+2l_3>0}%
\end{equation}
\ 

Step 3. \ We show that the first term in (\ref{cor21a}) is nonnegative by
proving
\begin{equation}
\frac{2}{3}h_{111}^{2}+\frac{2\lambda_{2}}{\lambda_{1}-\lambda_{2}}h_{221}%
^{2}+\frac{2\lambda_{3}}{\lambda_{1}-\lambda_{3}}h_{331}^{2}\geq0.
\label{cor21c}%
\end{equation}
We only need to show it for $\lambda_{3}<0.$ Directly from the minimal surface
equation (\ref{Emin})
\[
h_{111}+h_{221}+h_{331}=0,
\]
we bound
\[
h_{331}^{2}=\left(  h_{111}+h_{221}\right)  ^{2}\leq\left(  \frac{2}{3}%
h_{111}^{2}+\frac{2\lambda_{2}}{\lambda_{1}-\lambda_{2}}h_{221}^{2}\right)
\left(  \frac{3}{2}+\frac{\lambda_{1}-\lambda_{2}}{2\lambda_{2}}\right)  .
\]
It follows that
\begin{gather*}
\frac{2}{3}h_{111}^{2}+\frac{2\lambda_{2}}{\lambda_{1}-\lambda_{2}}h_{221}%
^{2}+\frac{2\lambda_{3}}{\lambda_{1}-\lambda_{3}}h_{331}^{2}\geq\\
\left(  \frac{2}{3}h_{111}^{2}+\frac{2\lambda_{2}}{\lambda_{1}-\lambda_{2}%
}h_{221}^{2}\right)  \left[  1+\frac{2\lambda_{3}}{\lambda_{1}-\lambda_{3}%
}\left(  \frac{3}{2}+\frac{\lambda_{1}-\lambda_{2}}{2\lambda_{2}}\right)
\right]  .
\end{gather*}
The last factor becomes
\[
1+\frac{2\lambda_{3}}{\lambda_{1}-\lambda_{3}}\left(  \frac{3}{2}%
+\frac{\lambda_{1}-\lambda_{2}}{2\lambda_{2}}\right)  =\frac{\sigma_{2}%
}{\left(  \lambda_{1}-\lambda_{3}\right)  \lambda_{2}}>0.
\]
The above inequality is from the observation
\[
\operatorname{Re}%
{\displaystyle\prod\limits_{i=1}^{3}}
\left(  1+\sqrt{-1}\lambda_{i}\right)  =1-\sigma_{2}\leq0
\]
for $3\pi/2>\theta_{1}+\theta_{2}+\theta_{3}=\Theta\geq$ $\pi/2.$ Therefore
(\ref{cor21c}) holds.

We have proved the pointwise Jacobi inequality (\ref{Jacobi-3d}) in Lemma 2.2.
\end{proof}

\begin{lemma}
Let $u$ be a smooth solution to (\ref{EsLag}) with $n=3$ and $\Theta\geq
\pi/2.$ Suppose that the ordered eigenvalues $\lambda_{1}\geq\lambda_{2}%
\geq\lambda_{3}$ of the Hessian $D^{2}u$ satisfy $\lambda_{2}>\lambda_{3}$ at
point $p.$ Set
\[
b_{2}=\frac{1}{2}\left(  \ln\sqrt{1+\lambda_{1}^{2}}+\ln\sqrt{1+\lambda
_{2}^{2}}\right)  .
\]
\ Then $b_{2}$ satisfies at $p$
\begin{equation}
\bigtriangleup_{g}b_{2}\geq0. \label{subharmonicb2}%
\end{equation}
Further, suppose that $\lambda_{1}\equiv$ $\lambda_{2}$ in a neighborhood of
$p.$\ Then $b_{2}$ satisfies at $p$
\begin{equation}
\bigtriangleup_{g}b_{2}\geq\frac{1}{3}\left\vert \nabla_{g}b_{2}\right\vert
^{2}. \label{jacobi-b2}%
\end{equation}

\end{lemma}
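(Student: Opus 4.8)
The plan is to run the same mechanism as in Lemmas 2.1--2.2, splitting according to the behaviour of $\lambda_1-\lambda_2$ near $p$ and bridging the two extreme cases by continuity. The first thing to record is that $b_2$ is in fact a \emph{smooth} function in a whole neighborhood of $p$: since $\lambda_2(p)>\lambda_3(p)$, the characteristic polynomial of $D^2u$ splits smoothly near $p$ as $(\lambda^2-s\lambda+q)(\lambda-\lambda_3)$ with $s=\lambda_1+\lambda_2$ and $q=\lambda_1\lambda_2$ smooth, and $e^{4b_2}=(1+\lambda_1^2)(1+\lambda_2^2)=1+(s^2-2q)+q^2\ge 1$, so $b_2$ is a smooth function of $s$ and $q$; in particular $\bigtriangleup_g b_2$ and $|\nabla_g b_2|^2$ are continuous near $p$.

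First I would treat the nondegenerate case $\lambda_1>\lambda_2>\lambda_3$ at $p$ and prove $\bigtriangleup_g b_2\ge 0$ there. Now all three eigenvalues are simple near $p$, so $\ln\sqrt{1+\lambda_1^2}$ and $\ln\sqrt{1+\lambda_2^2}$ are smooth and Lemma 2.1 applies to each, to the second one after relabelling $1\leftrightarrow 2$; adding the two formulas and using the symmetry of $h_{ijk}$ expresses $\bigtriangleup_g(2b_2)$ as an explicit quadratic form $\sum(\cdots)h_{\alpha\beta\gamma}^2$ in the third derivatives. I would then show this form is nonnegative using the sign facts from Step 1 of the proof of Lemma 2.2, namely $\theta_1\ge\theta_2>0$ and $\theta_i+\theta_j>0$ for every pair (hence $\lambda_1,\lambda_2>0$ and $\lambda_i+\lambda_j>0$), which all follow from $\theta_1+\theta_2+\theta_3=\Theta\ge\pi/2$; together with $1+\lambda_1\lambda_2=\cos(\theta_1-\theta_2)/(\cos\theta_1\cos\theta_2)>0$, the inequality $1-\sigma_2=\operatorname{Re}\prod_i(1+\sqrt{-1}\lambda_i)\le 0$, the bound $\lambda_1+2\lambda_3>0$ from (\ref{l_1+2l_3>0}), and the trace relations $h_{11\gamma}+h_{22\gamma}+h_{33\gamma}=0$ coming directly from (\ref{Emin}) as in Step 3 of the proof of Lemma 2.2. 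The terms of a priori indefinite sign I would handle exactly in that spirit: eliminate the $h_{33\gamma}$ via the trace relations and bound the surviving quadratic form by Cauchy--Schwarz, the decisive positivity coming from $\sigma_2\ge 1$. The general $p$ with $\lambda_2>\lambda_3$ then follows at once: if $\lambda_1\equiv\lambda_2$ near $p$ the second assertion gives the stronger $\bigtriangleup_g b_2\ge\frac{1}{3}|\nabla_g b_2|^2\ge 0$; otherwise $\{\lambda_1\neq\lambda_2\}\cup\operatorname{int}\{\lambda_1=\lambda_2\}$ is open and dense near $p$, and $\bigtriangleup_g b_2\ge 0$ holds on it (by the nondegenerate case and by the second assertion applied inside $\operatorname{int}\{\lambda_1=\lambda_2\}$), so continuity of $\bigtriangleup_g b_2$ forces $\bigtriangleup_g b_2(p)\ge 0$.

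For the second assertion, assume $\lambda_1\equiv\lambda_2=:\mu$ near $p$, so $\lambda_3$ is simple there and $2\mu=\bigtriangleup u-\lambda_3$ is smooth. I would compute $\bigtriangleup_g\mu$ by inserting the identity $\bigtriangleup_g u_{ii}=\sum_{\alpha,\beta}g^{\alpha\alpha}g^{\beta\beta}(\lambda_\alpha+\lambda_\beta)u_{\alpha\beta i}^2$ from (\ref{lapujj}) into $\bigtriangleup_g(\bigtriangleup u)=\sum_i\bigtriangleup_g u_{ii}$, and the relabelled Lemma 2.1 computation $\bigtriangleup_g\lambda_3=\bigtriangleup_g u_{33}+\sum_\gamma g^{\gamma\gamma}\sum_{k\neq3}\frac{2u_{3k\gamma}^2}{\lambda_3-\lambda_k}$ for $\lambda_3$; subtracting cancels the $i=3$ contribution and leaves $\bigtriangleup_g(2\mu)$ as a sum over $i\in\{1,2\}$ plus the manifestly positive terms proportional to $\frac{h_{3k\gamma}^2}{\lambda_k-\lambda_3}$ with $k\in\{1,2\}$ (positive since $\lambda_k=\mu>\lambda_3$). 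Since $b_2=\ln\sqrt{1+\mu^2}$ one has $\bigtriangleup_g b_2=\frac{\mu}{1+\mu^2}\bigtriangleup_g\mu+\frac{1-\mu^2}{(1+\mu^2)^2}|\nabla_g\mu|^2$ and $|\nabla_g b_2|^2=\frac{\mu^2}{(1+\mu^2)^2}|\nabla_g\mu|^2$, with $\nabla_g\mu$ controlled by the combinations $h_{11\gamma}+h_{22\gamma}$; then $\bigtriangleup_g b_2-\frac{1}{3}|\nabla_g b_2|^2\ge 0$ reduces to a pointwise quadratic inequality in the $h_{\alpha\beta\gamma}$ with $\alpha,\beta\in\{1,2\}$, considerably simplified by $\lambda_1=\lambda_2$, which I would verify by the same phase arguments as in Steps 2 and 3 of the proof of Lemma 2.2 (using $\sigma_2\ge 1$ and $\lambda_1+2\lambda_3>0$).

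The main obstacle is the two nonnegativity verifications for the explicit third-derivative quadratic forms: organizing the many $h_{\alpha\beta\gamma}^2$ terms so that the trace relations and the short list of phase-dependent sign facts ($\theta_i+\theta_j>0$, $1+\lambda_1\lambda_2>0$, $\sigma_2\ge 1$, $\lambda_1+2\lambda_3>0$) exactly absorb the terms of indefinite sign. The continuity passage and the eigenvalue bookkeeping are routine by comparison.
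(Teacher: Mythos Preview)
Your plan for the first assertion (\ref{subharmonicb2}) is essentially the paper's: add the two Lemma~2.1 expressions, simplify (the paper records the result as (\ref{lapb2a})--(\ref{lapb2d}), valid by continuity for all $\lambda_1\ge\lambda_2>\lambda_3$), and kill the only dangerous blocks (\ref{lapb2a})--(\ref{lapb2b}) via the trace relation and Cauchy--Schwarz, the nonnegativity boiling down to $\sigma_2\ge 1$. Your continuity/density packaging is a harmless reorganisation of the same argument.

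For the Jacobi inequality (\ref{jacobi-b2}), however, there is a real gap. Your route via $2\mu=\bigtriangleup u-\lambda_3$ is perfectly legitimate and leads to the same quadratic form in the $h_{\alpha\beta\gamma}$ as the paper's, but that quadratic form is \emph{not} nonnegative under the trace relations $h_{11\gamma}+h_{22\gamma}+h_{33\gamma}=0$ and the phase facts ($\sigma_2\ge 1$, $\lambda_1+2\lambda_3>0$, etc.) alone. Concretely, take $\Theta=\pi/2$, $\mu=\lambda_1=\lambda_2=10$, $\lambda_3=-4.95$ (so $\sigma_2=1$), set every $h_{\alpha\beta\gamma}=0$ except $h_{111}=1$, $h_{221}=0.3$, $h_{331}=-1.3$. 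Plugging into (\ref{lapb2a}) gives $2\bigtriangleup_g b_2\approx 18.6$, while $\tfrac{2}{3}|\nabla_g b_2|^2=\tfrac{1}{6}\mu^2 h_{331}^2\approx 28.2$, so $\bigtriangleup_g b_2-\tfrac{1}{3}|\nabla_g b_2|^2<0$. Thus ``the same phase arguments as in Steps 2 and 3 of the proof of Lemma 2.2'' cannot close the inequality.

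What rescues the paper is an additional third-derivative constraint that you have not used: since $\lambda_1\equiv\lambda_2$ in a whole neighborhood, differentiating the eigenvector equations $(D^2u)U=\mu U$, $(D^2u)V=\mu V$ at $p$ (with $D^2u$ diagonal there) forces $u_{11e}=u_{22e}=\partial_e\mu$ for every $e$, i.e.\ $h_{11k}=h_{22k}=-\tfrac{1}{2}h_{33k}$. This collapses each block (\ref{lapb2a})--(\ref{lapb2c}) to a single $h_{11k}^2$ term, after which the Jacobi inequality reduces exactly to $\frac{\lambda_1+\lambda_3}{\lambda_1-\lambda_3}\ge\tfrac{1}{3}$, i.e.\ $\lambda_1+2\lambda_3\ge 0$. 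Your outline needs this extra identity (it is a genuine consequence of $\lambda_1\equiv\lambda_2$ in a neighborhood, not of $\lambda_1=\lambda_2$ at a point); once you insert it, either computational route goes through.
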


\begin{proof}
\ We assume that Hessian $D^{2}u$ is diagonalized at point $p.$ We may use
Lemma 2.1 to obtain expressions for both \ $\bigtriangleup_{g}\ln
\sqrt{1+\lambda_{1}^{2}}$ and $\bigtriangleup_{g}\ln\sqrt{1+\lambda_{2}^{2}},$
whenever the eigenvalues of $D^{2}u$ are distinct. From (\ref{lapb1a}),
(\ref{lapb1b}), and (\ref{lapb1c}),\ we have
\begin{gather}
\bigtriangleup_{g}\ln\sqrt{1+\lambda_{1}^{2}}+\bigtriangleup_{g}\ln
\sqrt{1+\lambda_{2}^{2}}=\label{lapb2}\\
(1+\lambda_{1}^{2})h_{111}^{2}+\sum_{k>1}\frac{2\lambda_{1}(1+\lambda
_{1}\lambda_{k})}{\lambda_{1}-\lambda_{k}}h_{kk1}^{2}+\sum_{k>1}\left[
1+\lambda_{1}^{2}+2\lambda_{1}\left(  \frac{1+\lambda_{1}\lambda_{k}}%
{\lambda_{1}-\lambda_{k}}\right)  \right]  h_{11k}^{2}\nonumber\\
+2\lambda_{1}\left[  \frac{1+\lambda_{3}^{2}}{\lambda_{1}-\lambda_{3}}%
+\frac{1+\lambda_{2}^{2}}{\lambda_{1}-\lambda_{2}}+(\lambda_{3}+\lambda
_{2})\right]  h_{321}^{2}\nonumber\\
+(1+\lambda_{2}^{2})h_{222}^{2}+\sum_{k\neq2}\frac{2\lambda_{2}(1+\lambda
_{2}\lambda_{k})}{\lambda_{2}-\lambda_{k}}h_{kk2}^{2}+\sum_{k\neq2}\left[
1+\lambda_{2}^{2}+2\lambda_{2}\left(  \frac{1+\lambda_{2}\lambda_{k}}%
{\lambda_{2}-\lambda_{k}}\right)  \right]  h_{22k}^{2}\nonumber\\
+2\lambda_{2}\left[  \frac{1+\lambda_{3}^{2}}{\lambda_{2}-\lambda_{3}}%
+\frac{1+\lambda_{1}^{2}}{\lambda_{2}-\lambda_{1}}+(\lambda_{3}+\lambda
_{1})\right]  h_{321}^{2}.\text{ \ }\nonumber
\end{gather}
The function $b_{2}$ is symmetric in $\lambda_{1}$ and $\lambda_{2},$ thus
$b_{2}$ is smooth even when $\lambda_{1}=\lambda_{2},$ provided that
$\lambda_{2}>\lambda_{3}.$ \ We simplify (\ref{lapb2}) to the following, which
holds by continuity wherever $\lambda_{1}\geq\lambda_{2}>\lambda_{3}.$%
\[
2\bigtriangleup_{g}b_{2}=
\]%
\begin{align}
&  (1+\lambda_{1}^{2})h_{111}^{2}+(3+\lambda_{2}^{2}+2\lambda_{1}\lambda
_{2})h_{221}^{2}+\left(  \frac{2\lambda_{1}}{\lambda_{1}-\lambda_{3}}%
+\frac{2\lambda_{1}^{2}\lambda_{3}}{\lambda_{1}-\lambda_{3}}\right)
h_{331}^{2}\label{lapb2a}\\
&  +(3+\lambda_{1}^{2}+2\lambda_{1}\lambda_{2})h_{112}^{2}+(1+\lambda_{2}%
^{2})h_{222}^{2}+\left(  \frac{2\lambda_{2}}{\lambda_{2}-\lambda_{3}}%
+\frac{2\lambda_{2}^{2}\lambda_{3}}{\lambda_{2}-\lambda_{3}}\right)
h_{332}^{2}\label{lapb2b}\\
&  +\left[  \frac{3\lambda_{1}-\lambda_{3}+\lambda_{1}^{2}(\lambda_{1}%
+\lambda_{3})}{\lambda_{1}-\lambda_{3}}\right]  h_{113}^{2}+\left[
\frac{3\lambda_{2}-\lambda_{3}+\lambda_{2}^{2}(\lambda_{2}+\lambda_{3}%
)}{\lambda_{2}-\lambda_{3}}\right]  h_{223}^{2}\label{lapb2c}\\
&  +2\left[  1+\lambda_{1}\lambda_{2}+\lambda_{2}\lambda_{3}+\lambda
_{3}\lambda_{1}+\frac{\lambda_{1}\left(  1+\lambda_{3}^{2}\right)  }%
{\lambda_{1}-\lambda_{3}}+\frac{\lambda_{2}\left(  1+\lambda_{3}^{2}\right)
}{\lambda_{2}-\lambda_{3}}\right]  h_{123}^{2}. \label{lapb2d}%
\end{align}
Using the relations $\lambda_{1}\geq\lambda_{2}>0,\ \lambda_{i}+\lambda
_{j}>0,$ and $\sigma_{2}\geq1$ derived in the proof of Lemma 2.2, we see that
(\ref{lapb2d}) and (\ref{lapb2c}) are nonnegative. We only need to justify the
nonnegativity of (\ref{lapb2a}) and (\ref{lapb2b}) for $\lambda_{3}<0.$ From
the minimal surface equation (\ref{Emin}), we know
\[
h_{332}^{2}=\left(  h_{112}+h_{222}\right)  ^{2}\leq\left[  (\lambda_{1}%
^{2}+2\lambda_{1}\lambda_{2})h_{112}^{2}+\lambda_{2}^{2}h_{222}^{2}\right]
\left(  \frac{1}{\lambda_{1}^{2}+2\lambda_{1}\lambda_{2}}+\frac{1}{\lambda
_{2}^{2}}\right)  .
\]
It follows that
\begin{align*}
(\ref{lapb2b})  &  \geq(\lambda_{1}^{2}+2\lambda_{1}\lambda_{2})h_{112}%
^{2}+\lambda_{2}^{2}h_{222}^{2}+\frac{2\lambda_{2}^{2}\lambda_{3}}{\lambda
_{2}-\lambda_{3}}h_{332}^{2}\\
&  \geq\left[  (\lambda_{1}^{2}+2\lambda_{1}\lambda_{2})h_{112}^{2}%
+\lambda_{2}^{2}h_{222}^{2}\right]  \left[  1+\frac{2\lambda_{2}^{2}%
\lambda_{3}}{\lambda_{2}-\lambda_{3}}\left(  \frac{1}{\lambda_{1}^{2}%
+2\lambda_{1}\lambda_{2}}+\frac{1}{\lambda_{2}^{2}}\right)  \right]  .
\end{align*}
The last term becomes
\begin{align*}
&  \frac{2\lambda_{2}^{2}\lambda_{3}}{\lambda_{2}-\lambda_{3}}\left(
\frac{\lambda_{2}-\lambda_{3}}{2\lambda_{2}^{2}\lambda_{3}}+\frac{1}%
{\lambda_{1}^{2}+2\lambda_{1}\lambda_{2}}+\frac{1}{\lambda_{2}^{2}}\right) \\
&  =\frac{\lambda_{2}}{\lambda_{2}-\lambda_{3}}\left[  \frac{\sigma_{2}%
}{\lambda_{1}\lambda_{2}}-\frac{\lambda_{3}}{\left(  \lambda_{1}+2\lambda
_{2}\right)  }\right]  \geq0.
\end{align*}
Thus (\ref{lapb2b}) is nonnegative. Similarly (\ref{lapb2a}) is nonnegative.
We have proved (\ref{subharmonicb2}).

Next we prove (\ref{jacobi-b2}), still assuming $D^{2}u$ is diagonalized at
point $p.$ Plugging in $\lambda_{1}=\lambda_{2}$ into (\ref{lapb2a}),
(\ref{lapb2b}), and (\ref{lapb2c}), we get
\[
2\bigtriangleup_{g}b_{2}\geq
\]%
\begin{align*}
&  \lambda_{1}^{2}\left(  h_{111}^{2}+3h_{221}^{2}+\frac{2\lambda_{3}}%
{\lambda_{1}-\lambda_{3}}h_{331}^{2}\right) \\
&  +\;\lambda_{1}^{2}\left(  3h_{112}^{2}+h_{222}^{2}+\frac{2\lambda_{3}%
}{\lambda_{1}-\lambda_{3}}h_{332}^{2}\right) \\
&  +\lambda_{1}^{2}\left(  \frac{\lambda_{1}+\lambda_{3}}{\lambda_{1}%
-\lambda_{3}}\right)  \left(  h_{113}^{2}+h_{223}^{2}\right)  .
\end{align*}
Differentiating the eigenvector equations in the neighborhood where
$\lambda_{1}\equiv$ $\lambda_{2}$%
\[
\left(  D^{2}u\right)  U=\frac{\lambda_{1}+\lambda_{2}}{2}U,\ \left(
D^{2}u\right)  V=\frac{\lambda_{1}+\lambda_{2}}{2}V,\ \text{and}\ \left(
D^{2}u\right)  W=\lambda_{3}W,
\]
we see that $u_{11e}=u_{22e}$ for any $e\in\mathbb{R}^{3}$ at point $p.$ Using
the minimal surface equation (\ref{Emin}), we then have
\[
h_{11k}=h_{22k}=-\frac{1}{2}h_{33k}%
\]
at point $p.$ Thus
\[
\bigtriangleup_{g}b_{2}\geq\lambda_{1}^{2}\left[  2\left(  \frac{\lambda
_{1}+\lambda_{3}}{\lambda_{1}-\lambda_{3}}\right)  h_{111}^{2}+2\left(
\frac{\lambda_{1}+\lambda_{3}}{\lambda_{1}-\lambda_{3}}\right)  h_{112}%
^{2}+\left(  \frac{\lambda_{1}+\lambda_{3}}{\lambda_{1}-\lambda_{3}}\right)
h_{113}^{2}\right]  .
\]
The gradient $|\nabla_{g}b_{2}|^{2}$ has the expression at $p$
\[
|\nabla_{g}b_{2}|^{2}=\sum_{k=1}^{3}g^{kk}\left(  \frac{1}{2}\frac{\lambda
_{1}}{1+\lambda_{1}^{2}}\partial_{k}u_{11}+\frac{1}{2}\frac{\lambda_{2}%
}{1+\lambda_{2}^{2}}\partial_{k}u_{22}\right)  ^{2}=\sum_{k=1}^{3}\lambda
_{1}^{2}h_{11k}^{2}.
\]
Thus at $p$%
\begin{gather*}
\bigtriangleup_{g}b_{2}-\frac{1}{3}|\nabla_{g}b_{2}|^{2}\geq\\
\lambda_{1}^{2}\left\{  \left[  2\left(  \frac{\lambda_{1}+\lambda_{3}%
}{\lambda_{1}-\lambda_{3}}\right)  -\frac{1}{3}\right]  h_{111}^{2}+\left[
2\left(  \frac{\lambda_{1}+\lambda_{3}}{\lambda_{1}-\lambda_{3}}\right)
-\frac{1}{3}\right]  h_{112}^{2}+\left(  \frac{\lambda_{1}+\lambda_{3}%
}{\lambda_{1}-\lambda_{3}}-\frac{1}{3}\right)  h_{113}^{2}\right\} \\
\geq0,
\end{gather*}
where we again used $\lambda_{1}+2\lambda_{3}>0$ from (\ref{l_1+2l_3>0}). We
have proved (\ref{jacobi-b2}) of Lemma 2.3.
\end{proof}

The following is the first main result of this section. This Jacobi inequality
is crucial in our proofs of Theorems 1.1 and 1.2.

\begin{proposition}
Let $u$ be a smooth solution to the special\ Lagrangian equation (\ref{EsLag})
with $n=3$ and $\Theta\geq\pi/2$ on $B_{R}.$ Set
\[
b=\max\left\{  \ln\sqrt{1+\lambda_{\max}^{2}},\ K\right\}  ,
\]
with $K=1+\ln\sqrt{1+\tan^{2}\left(  \frac{\Theta}{3}\right)  .}$ Then $b$
satisfies the integral Jacobi inequality
\begin{equation}
\int_{B_{R}}-\left\langle \nabla_{g}\varphi,\nabla_{g}b\right\rangle
_{g}dv_{g}\geq\frac{1}{3}\int_{B_{R}}\varphi\left\vert \nabla_{g}b\right\vert
^{2}dv_{g} \label{IJacobi-3}%
\end{equation}
for all non-negative $\varphi\in C_{0}^{\infty}\left(  B_{R}\right)  .$
\label{PIJacobi}
\end{proposition}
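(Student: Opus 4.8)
The plan is to establish the pointwise Jacobi inequality $\bigtriangleup_{g}b\geq\frac{1}{3}\left\vert \nabla_{g}b\right\vert ^{2}$ in the distributional sense on $B_{R}$: since $b=\max\{\ln\sqrt{1+\lambda_{\max}^{2}},\ K\}$ is locally Lipschitz (being the maximum of a constant and a Lipschitz function of the smooth field $D^{2}u$), and since $\bigtriangleup_{g}$ is self-adjoint with respect to $dv_{g}$, the statement ``$\bigtriangleup_{g}b\geq\frac13|\nabla_{g}b|^{2}$ weakly'' is exactly (\ref{IJacobi-3}). Abbreviate $b_{1}=\ln\sqrt{1+\lambda_{\max}^{2}}=\ln\sqrt{1+\lambda_{1}^{2}}$. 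Because $\theta_{1}\geq\theta_{2}\geq\theta_{3}$ and $\theta_{1}+\theta_{2}+\theta_{3}=\Theta$ we have $\theta_{1}\geq\Theta/3$, hence $b_{1}\geq K-1$ on $B_{R}$, with equality only where $\lambda_{1}=\lambda_{2}=\lambda_{3}=\tan(\Theta/3)$; in particular, wherever $b_{1}>K-\tfrac{1}{2}$ the largest eigenvalue satisfies $\lambda_{1}>\lambda_{3}$, so the two largest eigenvalues are there smoothly separated from the third. (This ``room'' is what the constant $K=1+\ln\sqrt{1+\tan^{2}(\Theta/3)}$ provides.)

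I first treat the regions where $b$ is smooth. On the open set $\{b_{1}>K\}\cap\{\lambda_{1}>\lambda_{2}\}$ the eigenvalue $\lambda_{1}$ is simple, $b=b_{1}$ is smooth, and Lemma 2.2 gives (\ref{Jacobi-3d}). On the open set $\{b_{1}>K\}\cap\operatorname{int}\{\lambda_{1}=\lambda_{2}\}$ we have $\lambda_{1}\equiv\lambda_{2}$ near each point and, by the first paragraph, $\lambda_{2}>\lambda_{3}$ there (total degeneracy being excluded by $b_{1}>K$); hence $b=b_{1}=b_{2}$ is smooth and Lemma 2.3, inequality (\ref{jacobi-b2}), again gives $\bigtriangleup_{g}b\geq\frac{1}{3}|\nabla_{g}b|^{2}$. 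On the open set $\{b_{1}<K\}$ we have $b\equiv K$, so the inequality is trivial. These open sets exhaust $B_{R}$ apart from the closed exceptional set $\mathcal{E}$ consisting of the ``seam'' $\{b_{1}>K\}\cap\big(\{\lambda_{1}=\lambda_{2}\}\setminus\operatorname{int}\{\lambda_{1}=\lambda_{2}\}\big)$ together with $\{b_{1}=K\}$. Since smooth solutions of (\ref{EsLag}) are real-analytic, the quantity $(\lambda_{1}-\lambda_{2})^{2}$ is real-analytic wherever $\lambda_{1}>\lambda_{3}$ (it is a symmetric function of the two top eigenvalues), and $b_{1}$ is real-analytic off the seam; a standard analyticity argument then shows $\mathcal{E}$ has Lebesgue measure zero.

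The crux is to check that $\mathcal{E}$ carries no negative distributional mass. Near any point of $\mathcal{E}$ one has $\lambda_{1}>\lambda_{3}$, so $\lambda_{\max}$ is the larger eigenvalue of a smoothly varying positive definite $2\times 2$ symmetric matrix $A(x)$; writing $\phi(t)=\ln\sqrt{1+t^{2}}$, which is increasing on $[0,\infty)$, the Rayleigh quotient gives $b_{1}=\phi(\lambda_{\max}(A(x)))=\max_{|\xi|=1}\phi(\langle A(x)\xi,\xi\rangle)$, a maximum of smooth functions with locally uniformly bounded $C^{1,1}$ norms. Hence $b_{1}$, and therefore $b=\max\{b_{1},K\}$, is locally semiconvex on $B_{R}$, so the distribution $\varphi\mapsto-\int_{B_{R}}\langle\nabla_{g}b,\nabla_{g}\varphi\rangle_{g}\,dv_{g}$ is a Radon measure whose part singular with respect to Lebesgue measure is nonnegative: the ``convex corner'' of the eigenvalue maximum contributes only nonnegative mass, composition with the increasing $\phi$ preserves that sign, and taking the maximum with the constant $K$ only adds nonnegative distributional Laplacian. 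Putting the pieces together: $\bigtriangleup_{g}b$ equals its absolutely continuous part plus a nonnegative singular part; its Lebesgue density equals, almost everywhere, the classical $\bigtriangleup_{g}b$, which by the previous paragraph is $\geq\frac{1}{3}|\nabla_{g}b|^{2}$ off the null set $\mathcal{E}$; and $\nabla_{g}b=\mathbf{1}_{\{b_{1}>K\}}\nabla_{g}b_{1}$ almost everywhere, so $|\nabla_{g}b|^{2}$ is precisely the $L^{1}_{loc}$ density of the right-hand side of (\ref{IJacobi-3}). Therefore $\bigtriangleup_{g}b\geq\frac{1}{3}|\nabla_{g}b|^{2}$ as distributions, i.e. (\ref{IJacobi-3}).

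The main obstacle is this last step: making rigorous that the failure of smoothness of $\ln\sqrt{1+\lambda_{\max}^{2}}$ across the multiplicity locus $\{\lambda_{1}=\lambda_{2}\}$ is of convex-corner type (so it helps rather than hurts), and that composing with $\phi$ does not spoil the sign of the singular part of the distributional Laplacian. Everything else is the pointwise input of Lemmas 2.1--2.3 together with the elementary behavior of the maximum with a constant.
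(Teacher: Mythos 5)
Your proposal is correct in outline but takes a genuinely different route from the paper's own proof, so let me compare. The paper handles the failure of smoothness of $b_{1}=\ln\sqrt{1+\lambda_{\max}^{2}}$ across the multiplicity locus $S=\{\lambda_{1}=\lambda_{2}\}$ by a hands-on decomposition: when $|S|=0$, it splits $\Omega=\{b>K\}$ into $\Omega_{1}(\tau)=\{b_{1}>b_{2}+\tau\}$ (where $b=b_{1}$ is smooth and (\ref{Jacobi-3d}) holds) and a thin collar $\Omega_{2}(\tau)=\{b_{2}\leq b_{1}<b_{2}+\tau\}$ (where $b_{2}$ is smooth, using the same ``room'' provided by $K$ that you identify, and (\ref{subharmonicb2}) holds), then integrates by parts, checks sign on all the boundary terms via $b\geq b_{2}+\tau$ with equality on the interface, invokes Sard's theorem to make the boundaries piecewise $C^{1}$, and sends $\tau\to 0$; when $|S|>0$, it uses analyticity of the discriminant to reduce to the locally-degenerate case and applies (\ref{jacobi-b2}). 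You instead cover $B_{R}$ by the three open regions where $b$ is smooth, argue by analyticity that the leftover set $\mathcal{E}$ is Lebesgue-null, and then dispose of $\mathcal{E}$ with a single structural observation: near $\{b_{1}>K-\tfrac12\}$ the top $2\times2$ block of $D^{2}u$ is smoothly separated, so $b_{1}=\max_{|\xi|=1}\phi(\langle A(x)\xi,\xi\rangle)$ with $\phi$ increasing is a sup of smooth functions with locally uniformly bounded second derivatives, hence semiconvex; taking the max with $K$ preserves this; therefore $\Delta_{g}b$ is a Radon measure whose singular part is nonnegative (since the principal coefficients $\sqrt{\det g}\,g^{ij}$ are positive definite and the first-order terms contribute no singular mass), while its absolutely continuous density agrees a.e.\ with the classical value and so is bounded below by $\tfrac13|\nabla_{g}b|^{2}$. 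Your route is conceptually cleaner and avoids the $\tau$-collar, the boundary integrals, Sard's theorem, and the $|S|>0$ dichotomy, at the price of importing semiconvexity/Alexandrov-type measure theory (that the singular part of the distributional Hessian of a semiconvex function is nonnegative, and that the a.c.\ density equals the pointwise Hessian a.e.) which you do not fully spell out. Two small points worth tightening if you develop this: (i) the increasing function $\phi(t)=\ln\sqrt{1+t^{2}}$ is not convex, so semiconvexity is not preserved by composition in general; your argument is valid precisely because you push $\phi$ \emph{inside} the max to keep a sup of smooth functions with uniform $C^{2}$ bounds, and this is worth stating explicitly; (ii) the measure-zero claim for $\{b_{1}=K\}$ should be split: on components where $b_{1}\equiv K$ the inequality is trivial and they should simply be grouped with the smooth regions, and elsewhere analyticity gives measure zero.
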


\begin{proof}
If $b_{1}=\ln\sqrt{1+\lambda_{\max}^{2}}$ is smooth everywhere, then the
pointwise Jacobi inequality (\ref{Jacobi-3d}) in Lemma 2.2 \ already implies
the integral Jacobi (\ref{IJacobi-3}).\ It is known that $\lambda_{\max}$ is
always a Lipschitz function of the entries of the Hessian $D^{2}u.$ \ Now $u$
is smooth in $x,$ so $b_{1}=\ln\sqrt{1+\lambda_{\max}^{2}}$ is Lipschitz in
terms of $x.$ If \ $b_{1}$ (or equivalently $\lambda_{\max})\;$is not smooth,
then the first two largest eigenvalues $\lambda_{1}\left(  x\right)  $ and
$\lambda_{2}\left(  x\right)  $ coincide, and $b_{1}\left(  x\right)
=b_{2}\left(  x\right)  ,$ where $b_{2}\left(  x\right)  $ is the average
$b_{2}=\left(  \ln\sqrt{1+\lambda_{1}^{2}}+\ln\sqrt{1+\lambda_{2}^{2}}\right)
/2.$ \ We prove the integral Jacobi inequality (\ref{IJacobi-3}) for a
possibly singular $b_{1}\left(  x\right)  $ in two cases. Set
\[
S=\left\{  x|\ \lambda_{1}\left(  x\right)  =\lambda_{2}\left(  x\right)
\right\}  .
\]

Case 1. $S$ has measure zero. For small $\tau>0,$ let
\begin{align*}
\Omega &  =B_{R}\backslash\left\{  x|\ b_{1}\left(  x\right)  \leq K\right\}
=B_{R}\backslash\left\{  x|\ b\left(  x\right)  =K\right\} \\
\Omega_{1}\left(  \tau\right)   &  =\left\{  x|\ b\left(  x\right)
=b_{1}\left(  x\right)  >b_{2}\left(  x\right)  +\tau\right\}  \cap\Omega\\
\Omega_{2}\left(  \tau\right)   &  =\left\{  x|\ b_{2}\left(  x\right)  \leq
b\left(  x\right)  =b_{1}\left(  x\right)  <b_{2}\left(  x\right)
+\tau\right\}  \cap\Omega.
\end{align*}
Now $b\left(  x\right)  =b_{1}\left(  x\right)  $ is smooth in $\overline
{\Omega_{1}\left(  \tau\right)  }.$ We claim that $b_{2}\left(  x\right)  $ is
smooth in $\overline{\Omega_{2}\left(  \tau\right)  }.$ We know $b_{2}\left(
x\right)  $ is smooth wherever $\lambda_{2}\left(  x\right)  >\lambda
_{3}\left(  x\right)  .$ If (the \ Lipschitz) $b_{2}\left(  x\right)  $ is not
smooth at $x_{\ast}\in\overline{\Omega_{2}\left(  \tau\right)  },$ then
\begin{align*}
\ln\sqrt{1+\lambda_{3}^{2}}  &  =\ln\sqrt{1+\lambda_{2}^{2}}\geq\ln
\sqrt{1+\lambda_{1}^{2}}-2\tau\\
&  \geq\ln\sqrt{1+\tan^{2}\left(  \frac{\Theta}{3}\right)  }+1-2\tau,
\end{align*}
by the choice of $K.$ For small enough $\tau$, we have $\lambda_{2}%
=\lambda_{3}>\tan\left(  \frac{\Theta}{3}\right)  $ and a contradiction
\[
\left(  \theta_{1}+\theta_{2}+\theta_{3}\right)  \left(  x_{\ast}\right)
>\Theta.
\]

Note that
\begin{gather*}
\int_{B_{R}}-\left\langle \nabla_{g}\varphi,\nabla_{g}b\right\rangle
_{g}dv_{g}=\int_{\Omega}-\left\langle \nabla_{g}\varphi,\nabla_{g}%
b\right\rangle _{g}dv_{g}\\
=\lim_{\tau\rightarrow0^{+}}\left[  \int_{\Omega_{1}\left(  \tau\right)
}-\left\langle \nabla_{g}\varphi,\nabla_{g}b\right\rangle _{g}dv_{g}%
+\int_{\Omega_{2}\left(  \tau\right)  }-\left\langle \nabla_{g}\varphi
,\nabla_{g}\left(  b_{2}+\tau\right)  \right\rangle _{g}dv_{g}\right]  .
\end{gather*}
By the smoothness of $b$ in $\Omega_{1}\left(  \tau\right)  $ and $b_{2}$ in
$\Omega_{2}\left(  \tau\right)  ,$ and also inequalities (\ref{Jacobi-3d}) and
(\ref{subharmonicb2}), we have
\begin{align*}
&  \int_{\Omega_{1}\left(  \tau\right)  }-\left\langle \nabla_{g}%
\varphi,\nabla_{g}b\right\rangle _{g}dv_{g}+\int_{\Omega_{2}\left(
\tau\right)  }-\left\langle \nabla_{g}\varphi,\nabla_{g}\left(  b_{2}%
+\tau\right)  \right\rangle _{g}dv_{g}\\
&  =\int_{\partial\Omega_{1}\left(  \tau\right)  }-\varphi\text{ }%
\partial_{\nu_{g}^{1}}b\text{ }dA_{g}+\int_{\Omega_{1}\left(  \tau\right)
}\varphi\bigtriangleup_{g}b_{1}dv_{g}\\
&  +\int_{\partial\Omega_{2}\left(  \tau\right)  }-\varphi\partial_{\nu
_{g}^{2}}\left(  b_{2}+\tau\right)  dA_{g}+\int_{\Omega_{2}\left(
\tau\right)  }\varphi\bigtriangleup_{g}\left(  b_{2}+\tau\right)  dv_{g}\\
&  \geq\int_{\partial\Omega_{1}\left(  \tau\right)  }-\varphi\text{ }%
\partial_{\nu_{g}^{1}}b\text{ }dA_{g}+\int_{\partial\Omega_{2}\left(
\tau\right)  }-\varphi\partial_{\nu_{g}^{2}}\left(  b_{2}+\tau\right)
dA_{g}+\frac{1}{3}\int_{\Omega_{1}\left(  \tau\right)  }\varphi\left\vert
\nabla_{g}b_{1}\right\vert ^{2}dv_{g},
\end{align*}
where $\nu_{g}^{1}$ and $\nu_{g}^{2}$ are the outward co-normals of
$\partial\Omega_{1}\left(  \tau\right)  $ and $\partial\Omega_{2}\left(
\tau\right)  $ with respect to the metric $g.$

Observe that if $b_{1}$ is not smooth on any part of $\partial\Omega
\backslash\partial B_{R}$, which is the $K$-level set of $b_{1,}$ then on this
portion $\partial\Omega\backslash\partial B_{R}$ is also the $K$-level set of
$b_{2},$ which is smooth near this portion. Applying Sard's theorem, we can
perturb $K$ so that $\partial\Omega$ is piecewise $C^{1}.$ Applying Sard's
theorem again, we find a subsequence of positive $\tau$ going to $0,$ so that
the boundaries $\partial\Omega_{1}\left(  \tau\right)  $ and $\partial
\Omega_{2}\left(  \tau\right)  $ are piecewise $C^{1}.$

Then, we show the above boundary integrals are non-negative. The boundary
integral portion along $\partial\Omega$ is easily seen non-negative, because
either $\varphi=0$, or $-\partial_{\nu_{g}^{1}}b\geq0,\ -\partial_{\nu_{g}%
^{2}}\left(  b_{2}+\tau\right)  $ $\geq0$ there. The boundary integral portion
in the interior of $\Omega$ is also non-negative, because there we have
\begin{gather*}
b=b_{2}+\tau\ \ \ \text{(and }b\geq b_{2}+\tau\ \ \text{in }\Omega_{1}\left(
\tau\right)  \text{)}\\
-\partial_{\nu_{g}^{1}}b\ -\partial_{\nu_{g}^{2}}\left(  b_{2}+\tau\right)
=\partial_{\nu_{g}^{2}}b\ -\partial_{\nu_{g}^{2}}\left(  b_{2}+\tau\right)
\geq0.
\end{gather*}
Taking the limit along the (Sard) sequence of $\tau$ going to $0,$ we obtain
$\Omega_{1}\left(  \tau\right)  \rightarrow\Omega$ up to a set of measure
zero, and
\begin{align*}
&  \int_{B_{R}}-\left\langle \nabla_{g}\varphi,\nabla_{g}b\right\rangle
_{g}dv_{g}\\
&  =\int_{\Omega}-\left\langle \nabla_{g}\varphi,\nabla_{g}b\right\rangle
_{g}dv_{g}\geq\frac{1}{3}\int_{\Omega}\left\vert \nabla_{g}b\right\vert
^{2}dv_{g}\\
&  =\frac{1}{3}\int_{B_{R}}\left\vert \nabla_{g}b\right\vert ^{2}dv_{g}.
\end{align*}

Case 2. $S$ has positive measure. The discriminant
\[
\mathcal{D}=\left(  \lambda_{1}-\lambda_{2}\right)  ^{2}\left(  \lambda
_{2}-\lambda_{3}\right)  ^{2}\left(  \lambda_{3}-\lambda_{1}\right)  ^{2}%
\]
is an analytic function in $B_{R},$ because the smooth $u$ is actually
analytic (cf. [M, p. 203]). So $\mathcal{D}$ must vanish identically. Then we
have either $\lambda_{1}\left(  x\right)  =\lambda_{2}\left(  x\right)  $ or
$\lambda_{2}\left(  x\right)  =\lambda_{3}\left(  x\right)  $ at any point
$x\in B_{R}.$ In turn, we know that $\lambda_{1}\left(  x\right)  =\lambda
_{2}\left(  x\right)  =\lambda_{3}\left(  x\right)  =\tan\frac{\Theta}{3}$ and
$b=K>b_{1}\left(  x\right)  $ at every \textquotedblleft
boundary\textquotedblright\ point of $S$ inside $B_{R},$ $x\in\partial
S\cap\mathring{B}_{R}.$ If the \textquotedblleft boundary\textquotedblright%
\ set $\partial S$ has positive measure, then $\lambda_{1}\left(  x\right)
=\lambda_{2}\left(  x\right)  =\lambda_{3}\left(  x\right)  =\tan\frac{\Theta
}{3}$ everywhere by the analyticity of $u,$ and (\ref{IJacobi-3}) is trivially
true. In the case that $\partial S$ has zero measure, $b=b_{1}>K$ is smooth up
to the boundary of every component of $\left\{  x|\ b\left(  x\right)
>K\right\}  .$ By the pointwise Jacobi inequality (\ref{jacobi-b2}), the
integral inequality (\ref{IJacobi-3}) is also valid in case 2.
\end{proof}

\subsection{Lewy rotation}

The next is the second main result of this section. Our proofs of Theorems 1.1
and 1.3 rely on this new representation of the original special\ Lagrangian graph.

\begin{proposition}
\label{Prop rotation}Let $u$ be a smooth solution to (\ref{EsLag}) with
$\Theta=$ $\left(  n-2\right)  \pi/2+\delta$ on $B_{R}(0)\subset\mathbb{R}%
^{n}.$\ Then the special Lagrangian surface $\mathfrak{M}=\left(
x,Du(x)\right)  $ can be represented as a gradient graph $\mathfrak{M}=\left(
\bar{x},D\bar{u}\left(  \bar{x}\right)  \right)  $ of the new potential
$\bar{u}$ satisfying (\ref{EsLag}) with phase $\Theta=\left(  n-2\right)
\pi/2$ in a domain containing a ball of radius
\[
\bar{R}\geq\frac{R}{2\cos\left(  \delta/n\right)  }.
\]

\end{proposition}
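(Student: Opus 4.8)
The plan is to carry out the Lewy rotation explicitly and then estimate the image of the base ball. Set $\mu=\delta/n$ and rotate the ambient $\mathbb{R}^{n}\times\mathbb{R}^{n}=\mathbb{C}^{n}$ by the unitary map $z\mapsto e^{-i\mu}z$. In real coordinates the point $(x,Du(x))$ of $\mathfrak{M}$ is sent to $(\bar{x},\bar{y})$ with $\bar{x}=\Phi(x):=\cos\mu\,x+\sin\mu\,Du(x)$ and $\bar{y}=-\sin\mu\,x+\cos\mu\,Du(x)$. The one structural input I will use is the supercritical lower bound: since $\theta_{j}<\pi/2$ for all $j$ and $\sum_{i}\theta_{i}=\Theta=(n-2)\pi/2+\delta$, the smallest angle satisfies $\theta_{n}>\Theta-(n-1)\pi/2=-\pi/2+\delta$, hence $\lambda_{j}=\tan\theta_{j}>\tan(\delta-\pi/2)=-\cot\delta$ for every $j$, i.e. $D^{2}u>-(\cot\delta)\,I$ on $B_{R}$; note also that $\theta_j<\pi/2$ for all $j$ forces $0<\delta<\pi$.

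First I would show that $\Phi$ is a diffeomorphism of $B_{R}$ onto the open set $\Phi(B_{R})$ and identify the rotated graph. When $D^{2}u$ is diagonalized, $D\Phi=\cos\mu\,I+\sin\mu\,D^{2}u$ has eigenvalues $\cos\mu+\sin\mu\,\lambda_{j}=\cos(\theta_{j}-\mu)/\cos\theta_{j}$; since $\theta_{j}\in(-\pi/2,\pi/2)$ and $\theta_{j}-\mu>-\pi/2+\delta(n-1)/n>-\pi/2$, both cosines are positive, so $D\Phi>0$ and $\Phi$ is a local diffeomorphism. For injectivity, $\Phi(x_{1})=\Phi(x_{2})$ means $\cos\mu(x_{1}-x_{2})+\sin\mu(Du(x_{1})-Du(x_{2}))=0$; pairing with $x_{1}-x_{2}$ and using $\langle Du(x_{1})-Du(x_{2}),x_{1}-x_{2}\rangle>-\cot\delta\,|x_{1}-x_{2}|^{2}$ (integrate $D^{2}u>-(\cot\delta)I$ along the segment) forces $0>|x_{1}-x_{2}|^{2}(\cos\mu-\sin\mu\cot\delta)=|x_{1}-x_{2}|^{2}\,\sin(\delta(n-1)/n)/\sin\delta\ge 0$, so $x_{1}=x_{2}$. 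Along the way, the new Hessian is $D^{2}\bar{u}=(-\sin\mu\,I+\cos\mu\,D^{2}u)(\cos\mu\,I+\sin\mu\,D^{2}u)^{-1}$, with eigenvalues $\bar{\lambda}_{j}=\tan(\theta_{j}-\mu)$ and $\theta_{j}-\mu\in(-\pi/2,\pi/2)$; since $e^{-i\mu}$ is symplectic, the rotated (still minimal) submanifold is again Lagrangian, hence the gradient graph of some potential $\bar{u}$ over the simply connected domain $\Phi(B_{R})$, and $\sum_{j}\arctan\bar{\lambda}_{j}=\sum_{j}(\theta_{j}-\mu)=\Theta-\delta=(n-2)\pi/2$, so $\bar{u}$ solves (\ref{EsLag}) with critical phase.

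Next I would extract the ball. Since $\Phi$ is continuous and injective on $\overline{B_{R}}$ (apply the previous step on $\overline{B_{R'}}$ and let $R'\uparrow R$) and a local diffeomorphism on $B_{R}$, one has $\overline{\Phi(B_{R})}=\Phi(\overline{B_{R}})$ and $\partial\Phi(B_{R})=\Phi(\partial B_{R})$; therefore $\Phi(B_{R})$ contains the ball $B_{\bar{R}}(\Phi(0))$ with $\bar{R}=\operatorname{dist}(\Phi(0),\Phi(\partial B_{R}))$. To bound $\bar{R}$ from below, fix $x$ with $|x|=R$ and compute, using $D^{2}u>-(\cot\delta)I$ once more,
\begin{align*}
\bigl|\Phi(x)-\Phi(0)\bigr| &\ge \Bigl\langle \Phi(x)-\Phi(0),\tfrac{x}{|x|}\Bigr\rangle = \cos\mu\,R+\tfrac{\sin\mu}{R}\,\langle Du(x)-Du(0),x\rangle\\
&\ge R\bigl(\cos\mu-\sin\mu\cot\delta\bigr)=R\,\frac{\sin(\delta(n-1)/n)}{\sin\delta}.
\end{align*}
Hence $\bar{R}\ge R\,\sin(\delta(n-1)/n)/\sin\delta$, and the elementary identity $2\cos(\delta/n)\sin(\delta(n-1)/n)=\sin\delta+\sin(\delta(n-2)/n)\ge\sin\delta$ — valid since $0\le\delta(n-2)/n<\pi$ — gives $\bar{R}\ge R/(2\cos(\delta/n))$, as claimed.

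I expect the main obstacle to be the \emph{global} injectivity of $\Phi$: this is exactly where the supercritical lower bound $D^{2}u>-(\cot\delta)I$ is indispensable, and where one must pin down the sign of $\cos\mu-\sin\mu\cot\delta=\sin(\delta(n-1)/n)/\sin\delta$. A closely related point requiring care is that $\theta_{j}-\mu$ never escapes $(-\pi/2,\pi/2)$, so that $\arctan\bar{\lambda}_{j}=\theta_{j}-\mu$ with no $\pi$-ambiguity and the rotated phase is genuinely $(n-2)\pi/2$ rather than a translate of it; both reduce to $\delta(n-1)/n>0$ together with $0<\delta<\pi$. The degenerate case $\delta=0$ is trivial, as no rotation is needed. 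All the remaining steps are routine trigonometric rewritings and the standard topological fact that an open set contains the ball of radius $\operatorname{dist}(p,\partial U)$ about any $p\in U$.
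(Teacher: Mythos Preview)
Your proof is correct and follows essentially the same route as the paper: the same Lewy rotation $z\mapsto e^{-i\delta/n}z$, the same structural input $D^{2}u>-(\cot\delta)I$ coming from $\theta_{j}>-\pi/2+\delta$, and the same final trigonometric bound $\sin(\delta(n-1)/n)/\sin\delta\ge 1/(2\cos(\delta/n))$. The only notable difference is that the paper proves the full bi-Lipschitz inequality $|\bar{x}(x^{\alpha})-\bar{x}(x^{\beta})|\ge|x^{\alpha}-x^{\beta}|/(2\cos(\delta/n))$ for \emph{all} pairs (by expanding the square after translating so that $x^{\beta}=0$, $Du(x^{\beta})=0$), whereas you separate the argument into local invertibility, global injectivity, and a distance estimate from $\Phi(0)$ to $\Phi(\partial B_{R})$ via projection onto $x/|x|$; your projection computation would in fact yield the same bi-Lipschitz bound if applied to two arbitrary points, and that stronger form is used later in the paper (the distance estimate between $\bar{\Omega}_{1}$ and $\partial\bar{\Omega}_{5}$ in the proof of Theorem~1.1).
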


\begin{proof}
\ To obtain the new representation, we use a Lewy rotation (cf. [Y1], [Y2, p.
1356]). Take a $U\left(  n\right)  $ rotation of $\mathbb{C}^{n}%
\cong\mathbb{R}^{n}\times\mathbb{R}^{n}:\bar{z}=e^{-\sqrt{-1}\delta/n}z$ with
$z=x+\sqrt{-1}y$ and $\bar{z}=\bar{x}+\sqrt{-1}\bar{y}.$ Because $U\left(
n\right)  $ rotation preserves the length and complex structure,
$\mathfrak{M}$ is still a special Lagrangian submanifold with the
parametrization
\[
\left\{
\begin{array}
[c]{c}%
\bar{x}=x\cos\frac{\delta}{n}+Du\left(  x\right)  \sin\frac{\delta}{n}\\
D\bar{u}=-x\sin\frac{\delta}{n}+Du\left(  x\right)  \cos\frac{\delta}{n}%
\end{array}
\right.  .
\]
In order to show that this parameterization is that of a gradient graph over
$\bar{x}$ , we must first show that $\bar{x}(x)$ is a diffeomorphism onto its
image. This is accomplished by showing that%
\begin{equation}
\left\vert \bar{x}(x^{\alpha})-\bar{x}(x^{\beta})\right\vert \geq\frac
{1}{2\cos\delta/n}\left\vert x^{\alpha}-x^{\beta}\right\vert \label{lbonrad}%
\end{equation}
for any $x^{\alpha},$ $x^{\beta}.$ We assume by translation that $x^{\beta}=0$
and $Du\left(  x^{\beta}\right)  =0.$ Now\ $0<\delta<\pi,$ and $\theta
_{i}>\delta-\frac{\pi}{2},$ so $\ u+\frac{1}{2}\cot\delta x^{2}$ is convex,
\ and we have%
\begin{align*}
\left\vert \bar{x}\left(  x^{\alpha}\right)  -\bar{x}\left(  x^{\beta}\right)
\right\vert ^{2}  &  =\left\vert \bar{x}\left(  x^{\alpha}\right)  \right\vert
^{2}=\left\vert x^{\alpha}\cos\frac{\delta}{n}+Du\left(  x^{\alpha}\right)
\sin\frac{\delta}{n}\right\vert ^{2}\\
&  =\left\vert x^{\alpha}\left(  \cos\frac{\delta}{n}-\cot\delta\sin
\frac{\delta}{n}\right)  +\left[  Du\left(  x^{\alpha}\right)  +x^{\alpha}%
\cot\delta\right]  \sin\frac{\delta}{n}\right\vert ^{2}\\
&  =\left\vert x^{\alpha}\right\vert ^{2}\left[  \frac{\sin\frac{\left(
n-1\right)  \delta}{n}}{\sin\delta}\right]  ^{2}+\left\vert Du\left(
x^{\alpha}\right)  +x^{\alpha}\cot\delta\right\vert ^{2}\sin^{2}\frac{\delta
}{n}\\
&  +2\frac{\sin\frac{\left(  n-1\right)  \delta}{n}\sin\frac{\delta}{n}}%
{\sin\delta}\left\langle x^{\alpha},Du\left(  x^{\alpha}\right)  +x\cot
\delta\right\rangle \\
&  \geq\left\vert x^{\alpha}\right\vert ^{2}\left(  \frac{1}{2\cos\frac
{\delta}{n}}\right)  ^{2}.
\end{align*}
It follows that $\mathfrak{M}$ is a special Lagrangian graph over $\bar{x}$.
The Lagrangian graph is the gradient graph of a potential function $\bar{u}$
(cf. [HL, Lemma 2.2]), that is, $\mathfrak{M}=\left(  \bar{x},D\bar{u}\left(
\bar{x}\right)  \right)  .$ \ The eigenvalues $\bar{\lambda}_{i}$ of the
Hessian $D^{2}\bar{u}$ are determined by
\begin{equation}
\bar{\theta}_{i}=\arctan\bar{\lambda}_{i}=\theta_{i}-\frac{\delta}{n}%
\in\left(  -\frac{\pi}{2}+\frac{\left(  n-1\right)  \delta}{n}\,,\frac{\pi}%
{2}-\frac{\delta}{n}\right)  . \label{automatic bound}%
\end{equation}
Then
\[
\sum_{i=1}^{n}\bar{\theta}_{i}=\frac{\left(  n-2\right)  \pi}{2n},
\]
that is, $\bar{u}$ satisfies the special Lagrangian equation (\ref{EsLag}) of
phase $\left(  n-2\right)  \pi/2.$ \ The lower bound on $\bar{R}$ follows
immediately from (\ref{lbonrad}). \ 
\end{proof}

\subsection{Relative isoperimetric inequality}

We end with the last main result of this section, Proposition 2.3. This
relative isoperimetric inequality is needed in the proof of Theorem 1.2 to
prove a key ingredient, namely a Sobolev inequality for functions without
compact support. Proposition 2.3 is proved from the following classical
relative isoperimetric inequality for balls.

\begin{lemma}
\label{Lem poincare 2}Let $A$ and $A^{c}$ are disjoint measurable sets such
that $A\cup A^{c}=B_{1}(0)\subset\mathbb{R}^{n}.$\ Then
\begin{equation}
\min\left\{  \left\vert A\right\vert ,\left\vert A^{c}\right\vert \right\}
\leq C(n)\left\vert \partial A\cap\partial A^{c}\right\vert ^{n/n-1}.
\label{classical iso}%
\end{equation}

\end{lemma}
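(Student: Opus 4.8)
The plan is to derive this from the Poincar\'{e}--Sobolev inequality for functions of bounded variation on the unit ball. We may assume that $A$ has finite perimeter in $B_{1}$, since otherwise the right-hand side of (\ref{classical iso}) is infinite and there is nothing to prove; and by relabeling $A\leftrightarrow A^{c}$ we may assume $|A|\leq|A^{c}|$, so that $t:=|A|/|B_{1}|\leq 1/2$. Consider the characteristic function $f=\chi_{A}\in BV(B_{1})$, whose total variation measure satisfies $|Df|(B_{1})=|\partial A\cap\partial A^{c}|$ (the relative perimeter of $A$ in $B_{1}$); in fact $|Df|(B_{1})\leq\mathcal{H}^{n-1}(\partial A\cap B_{1})$ always, so proving (\ref{classical iso}) with $|Df|(B_{1})$ on the right is enough. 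Write $\bar{f}=\fint_{B_{1}}f=t$.

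First I would invoke the BV Poincar\'{e} inequality on the ball,
\[
\bigl\Vert f-\bar{f}\bigr\Vert_{L^{n/(n-1)}(B_{1})}\leq C(n)\,|Df|(B_{1}),
\]
which holds because $B_{1}$ is an extension (John) domain; it follows from the Gagliardo--Nirenberg--Sobolev inequality for $W^{1,1}$ functions by mollification together with an extension/compactness argument (see e.g.\ Giusti or Evans--Gariepy). Next I would compute the left-hand side explicitly. Since $f=1$ on $A$ and $f=0$ on $A^{c}$,
\[
\bigl\Vert f-\bar{f}\bigr\Vert_{L^{n/(n-1)}(B_{1})}^{n/(n-1)}=|A|(1-t)^{\frac{n}{n-1}}+|A^{c}|\,t^{\frac{n}{n-1}}=|B_{1}|\,t(1-t)\bigl[(1-t)^{\frac{1}{n-1}}+t^{\frac{1}{n-1}}\bigr].
\]
Because $t\leq 1/2$ we have $1-t\geq 1/2$ and hence $(1-t)^{1/(n-1)}+t^{1/(n-1)}\geq 2^{-1/(n-1)}$, so the right-hand side is bounded below by $C(n)^{-1}|B_{1}|\,t=C(n)^{-1}|A|$.

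Combining the two displays yields $|A|\leq C(n)\,|Df|(B_{1})^{n/(n-1)}\leq C(n)\,|\partial A\cap\partial A^{c}|^{n/(n-1)}$, which is (\ref{classical iso}) since $|A|=\min\{|A|,|A^{c}|\}$. The only step that genuinely needs care is the BV Poincar\'{e} inequality on the ball, together with the identification of $|Df|(B_{1})$ with the relative perimeter $|\partial A\cap\partial A^{c}|$; both are standard facts of geometric measure theory, and since Lemma \ref{Lem poincare 2} is used only qualitatively in the proof of Theorem 1.2 (to produce a Sobolev inequality for functions without compact support), it suffices to cite them. One could alternatively argue more by hand by replacing $A$ with a spherically symmetrized competitor inside $B_{1}$, but the BV route is the most economical.
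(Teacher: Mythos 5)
Your proof is correct, but it is worth noting that the paper itself supplies no proof of this lemma: it simply cites [LY, Theorem 5.3.2]. So there is no ``paper's argument'' to compare against in detail, and any valid derivation is acceptable here.

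Your route is a clean and self-contained one. You reduce the relative isoperimetric inequality to the $BV$ Poincar\'e--Sobolev inequality on the ball, compute $\Vert \chi_A - t\Vert_{L^{n/(n-1)}(B_1)}^{n/(n-1)}$ exactly as $|B_1|\,t(1-t)\bigl[(1-t)^{1/(n-1)}+t^{1/(n-1)}\bigr]$, and use $t\le 1/2$ to extract the lower bound $\gtrsim |A|$; the algebra checks out (indeed the first term $|A|(1-t)^{n/(n-1)}\ge |A|\,2^{-n/(n-1)}$ alone already suffices). The identification step is also handled correctly: you only ever need $|D\chi_A|(B_1)\le \mathcal{H}^{n-1}(\partial A\cap\partial A^c)$, which holds because the relative perimeter is the measure of the reduced boundary, a subset of the topological boundary. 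This is the standard textbook derivation of the relative isoperimetric inequality (e.g.\ Evans--Gariepy, Giusti), and it is almost certainly what the cited [LY, Theorem 5.3.2] does as well. For the purposes of this paper, where Lemma~\ref{Lem poincare 2} is used only with an unspecified constant $C(n)$ to feed Propositions~\ref{Prop relative isoperimetric} and~4.1, a clean citation---as the authors chose---is equally acceptable, but your write-up makes the dependence transparent and nothing in it is wrong.
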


\begin{proof}
See for example [LY, Theorem 5.3.2.].
\end{proof}

\begin{proposition}
\label{Prop relative isoperimetric}\ Let $\Omega_{1}\subset$ $\Omega
_{2}\subset B_{\rho}\subset\mathbb{R}^{n}.$ \ Suppose that dist$(\Omega
_{1},\partial\Omega_{2})\geq2,$ also $A$ and $A^{c}$ are disjoint measurable
sets such that $A\cup A^{c}=\Omega_{2}.$ \ Then
\[
\min\left\{  \left\vert A\cap\Omega_{1}\right\vert ,\left\vert A^{c}\cap
\Omega_{1}\right\vert \right\}  \leq C(n)\rho^{n}\left\vert \partial
A\cap\partial A^{c}\right\vert ^{n/n-1}.
\]

\end{proposition}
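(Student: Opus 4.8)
The plan is to deduce the relative isoperimetric inequality on the pair $\Omega_1 \subset \Omega_2 \subset B_\rho$ from the classical one on a unit ball (Lemma~\ref{Lem poincare 2}) by a covering argument. First I would cover $\Omega_1$ by a family of balls $\{B_1(x_j)\}_{j=1}^N$ of radius $1$ with centers $x_j \in \Omega_1$, chosen so that the concentric balls $B_{1/2}(x_j)$ already cover $\Omega_1$ while the overlap of the $B_1(x_j)$ is bounded by a dimensional constant $C(n)$ (a Vitali-type selection). Since $\mathrm{dist}(\Omega_1,\partial\Omega_2) \ge 2$, every such $B_1(x_j)$ is contained in $\Omega_2$, so the decomposition $\Omega_2 = A \cup A^c$ restricts to a decomposition $B_1(x_j) = (A \cap B_1(x_j)) \cup (A^c \cap B_1(x_j))$ to which Lemma~\ref{Lem poincare 2} applies after rescaling: on each ball,
\[
\min\left\{ |A \cap B_1(x_j)|,\ |A^c \cap B_1(x_j)| \right\} \le C(n)\, \big| \partial A \cap \partial A^c \cap B_1(x_j) \big|^{n/(n-1)}.
\]

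The key step is then to sum these local estimates and pass back to $\Omega_1$. Here I would split the index set into $J = \{ j : |A \cap B_1(x_j)| \le |A^c \cap B_1(x_j)| \}$ and its complement, so that on $J$ the left-hand minimum equals $|A \cap B_1(x_j)|$. Summing over $j \in J$, using that the $B_1(x_j)$ have bounded overlap on the right and that the $B_{1/2}(x_j)$ cover $\Omega_1$ on the left, together with superadditivity of $t \mapsto t^{n/(n-1)}$ (i.e. $\sum a_j^{n/(n-1)} \le (\sum a_j)^{n/(n-1)}$ for $a_j \ge 0$), yields
\[
\sum_{j \in J} |A \cap \Omega_1 \cap B_{1/2}(x_j)| \le C(n)\, \big| \partial A \cap \partial A^c \big|^{n/(n-1)}.
\]
The left side controls $|A \cap \Omega_1|$ up to the overlap constant. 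The same argument on the complementary index set controls $|A^c \cap \Omega_1|$ by the same right-hand quantity, and taking the minimum of the two gives the desired bound. The factor $\rho^n$ enters because the number of covering balls $N$ needed is at most $C(n)\rho^n$ (since the disjoint $B_{1/2}(x_j)$ all sit inside a ball of radius $\rho + 1 \le 2\rho$, or one simply absorbs it), and in the worst case — when, say, $J$ is all of the index set — one cannot do better than estimating $|A \cap \Omega_1| \le \sum_j |A \cap B_1(x_j)| \le N \cdot \max_j C(n)|\partial A \cap \partial A^c \cap B_1(x_j)|^{n/(n-1)} \le C(n)\rho^n |\partial A \cap \partial A^c|^{n/(n-1)}$.

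The main obstacle I anticipate is bookkeeping the two cases correctly: the classical inequality only bounds the \emph{smaller} of $|A|$, $|A^c|$ on each ball, but globally we want to bound the smaller of $|A \cap \Omega_1|$, $|A^c \cap \Omega_1|$, and which of $A$, $A^c$ is locally smaller may vary from ball to ball. The clean way around this is precisely to not try to be sharp: estimate $|A \cap \Omega_1| \le \sum_j |A \cap B_1(x_j)|$ and separately $|A^c \cap \Omega_1| \le \sum_j |A^c \cap B_1(x_j)|$, then for each $j$ bound \emph{one} of these two summands by the local perimeter term (whichever is the minimum) and the other trivially by $|B_1(x_j)| = C(n)$; multiplying through and taking the global minimum, together with $N \le C(n)\rho^n$, absorbs all the slack into the stated constant $C(n)\rho^n$. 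This crude route avoids any delicate matching of the local minorants and produces exactly the asserted form of the inequality.
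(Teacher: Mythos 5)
There is a genuine gap, and it sits exactly where you flag the main obstacle. Route (a) fails because $\sum_{j\in J}|A\cap\Omega_1\cap B_{1/2}(x_j)|$ omits the part of $A\cap\Omega_1$ that lies only in the balls indexed by $J^{c}$, so it does not control $|A\cap\Omega_1|$; the assertion that this sum ``controls $|A\cap\Omega_1|$ up to the overlap constant'' is false whenever $J\neq\{1,\dots,N\}$. Route (b) fails for a quieter reason: you are effectively trying to bound $\min\bigl\{\sum_{j}X_{j},\sum_{j}Y_{j}\bigr\}$ after bounding, for each $j$, exactly one of $X_{j},Y_{j}$ by a perimeter term $C(n)P_{j}$ and the other by $C(n)$. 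But $\min\bigl\{\sum_{j}X_{j},\sum_{j}Y_{j}\bigr\}\geq\sum_{j}\min\{X_{j},Y_{j}\}$, not $\leq$. In the mixed case where, say, half the balls have $A$ as local minority and half have $A^{c}$, both $\sum_{j}X_{j}$ and $\sum_{j}Y_{j}$ contain on the order of $N/2$ copies of the useless constant $C(n)$, so $\min\bigl\{\sum X_{j},\sum Y_{j}\bigr\}\gtrsim N$ with no a priori relation to $|\partial A\cap\partial A^{c}|$. Nothing in your argument forces the perimeter to be large when both index sets are substantial, so the stated bound does not follow.

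The missing ingredient is a continuity argument, which is precisely how the paper eliminates the mixed case. Set $\chi(x)=|A\cap B_{1}(x)|/|B_{1}(x)|$ for $x\in\Omega_{1}$; this is continuous. If $\chi(x^{\ast})=1/2$ at some $x^{\ast}\in\Omega_{1}$, apply the classical inequality to the single ball $B_{1}(x^{\ast})\subset\Omega_{2}$ to get $|B_{1}|/2\leq C(n)|\partial A\cap\partial A^{c}|^{n/(n-1)}$, and then trivially $\min\{|A\cap\Omega_{1}|,|A^{c}\cap\Omega_{1}|\}\leq|B_{\rho}|=\rho^{n}|B_{1}|\leq C(n)\rho^{n}|\partial A\cap\partial A^{c}|^{n/(n-1)}$; no covering needed. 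Otherwise $\chi$ stays strictly on one side of $1/2$ on (the connected set) $\Omega_{1}$, so the same set — say $A$ — is the local minority in every covering ball, and then your covering argument works with no case-splitting: $|A\cap\Omega_{1}|\leq\sum_{j}|A\cap B_{1}(x_{j})|\leq C(n)\sum_{j}|\partial A\cap\partial A^{c}|^{n/(n-1)}\leq C(n)\rho^{n}|\partial A\cap\partial A^{c}|^{n/(n-1)}$. Your covering and rescaling setup is fine; it is this intermediate-value dichotomy, inserted before you sum, that the argument requires.
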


\begin{proof}
\ Define a continuous function on $\Omega_{1}$
\[
\chi(x)=\frac{\left\vert A\cap B_{1}(x)\right\vert }{\left\vert B_{1}%
(x)\right\vert }.
\]
First, suppose that $\chi(x^{\ast})=1/2$ \ for some $x^{\ast}\in\Omega_{1}.$
From the relative isoperimetric inequality for balls (\ref{classical iso})
\[
\frac{\left\vert B_{1}(x^{\ast})\right\vert }{2}\leq C(n)\left\vert \partial
A\cap\partial A^{c}\cap B_{1}(x^{\ast})\right\vert ^{n/n-1}\leq C(n)\left\vert
\partial A\cap\partial A^{c}\right\vert ^{n/n-1}.
\]
Now $\ $%
\[
\min\left\{  \left\vert A\cap\Omega_{1}\right\vert ,\left\vert A^{c}\cap
\Omega_{1}\right\vert \right\}  \leq\left\vert \Omega_{1}\right\vert
<\left\vert B_{\rho}\right\vert =\frac{\left\vert B_{1}(x^{\ast})\right\vert
}{2}2\rho^{n}\leq C(n)\rho^{n}\left\vert \partial A\cap\partial A^{c}%
\right\vert ^{n/n-1},
\]
and the conclusion of this proposition follows.

On the other hand, suppose that for all $x\in\Omega_{1},$ $\chi(x)\neq1/2$.
\ Then either $\chi(x)<1/2$ on $\Omega_{1},$ or $\chi(x)>1/2$ on $\Omega_{1}.$
\ Without loss of generality we assume that $\chi(x)<1/2$ on $\Omega_{1}%
.$\ Cover $\Omega_{2}$ by $C(n)\rho^{n}$ balls of unit radius, $B_{1}(x_{i})$.
\ Consider the subcover which covers $\Omega_{1}$; each ball in this subcover
is completely contained inside $\Omega_{2}.$ \ Thus we may apply
(\ref{classical iso}) to each ball in this subcover and obtain
\[
\left\vert A\cap B_{1}(x_{i})\right\vert =\min\left\{  \left\vert A\cap
B_{1}(x_{i})\right\vert ,\left\vert A^{c}\cap B_{1}(x_{i})\right\vert
\right\}  \leq C(n)\left\vert \partial A\cap\partial A^{c}\right\vert
^{n/n-1}.
\]
Summing this inequality over the subcover, we get
\[
|A\cap\Omega_{1}|\leq\sum_{i=1}^{C(n)\rho^{n}}\left\vert A\cap B_{1}%
(x_{i})\right\vert \leq C(n)\rho^{n}C(n)\left\vert \partial A\cap\partial
A^{c}\right\vert ^{n/n-1}.
\]
Again, the conclusion of this proposition follows.
\end{proof}

\textbf{Remark}. Considering dumbbell type regions, we see that the order of
dependence on $\rho$ is sharp in Proposition 2.3.

\section{Proof Of Theorem 1.2}

For completeness, we reproduce the proof of Theorem 1.2 here. We assume that
$R=4$ and $u$ is a solution on $B_{4}\subset\mathbb{R}^{3}$ for simplicity of
notation. By scaling $u\left(  \frac{R}{4}x\right)  /\left(  \frac{R}%
{4}\right)  ^{2},$ we still get the estimate in Theorem 1.2. By symmetry, we
assume without loss of generality that $\Theta=\pi/2.$

Step 1. By the integral Jacobi inequality (\ref{IJacobi-3}) in Proposition
\ref{PIJacobi}, $b$\ is subharmonic in the integral sense, then $b^{3}$ is
also subharmonic in the integral sense on the minimal surface $\mathfrak{M}%
=\left(  x,Du\right)  :$%
\begin{align}
\int-\left\langle \nabla_{g}\varphi,\nabla_{g}b^{3}\right\rangle _{g}dv_{g}
&  =\int-\left\langle \nabla_{g}\left(  3b^{2}\varphi\right)  -6b\varphi
\nabla_{g}b,\nabla_{g}b\right\rangle _{g}dv_{g}\nonumber\\
&  \geq\int\left(  \varphi b^{2}\left\vert \nabla_{g}b\right\vert
^{2}+6b\varphi\left\vert \nabla_{g}b\right\vert ^{2}\right)  dv_{g}\geq0
\label{convex function also subharmonic}%
\end{align}
for all non-negative $\varphi\in C_{0}^{\infty},$ approximating $b$ by smooth
functions if necessary.

Applying Michael-Simon's mean value inequality [MS, Theorem 3.4] to the
Lipschitz subharmonic function $b^{3},$ we obtain
\[
b\left(  0\right)  \leq C\left(  3\right)  \left(  \int_{\mathfrak{B}_{1}%
\cap\mathfrak{M}}b^{3}dv_{g}\right)  ^{1/3}\leq C\left(  3\right)  \left(
\int_{B_{1}}b^{3}dv_{g}\right)  ^{1/3},
\]
where $\mathfrak{B}_{r}$ is the ball with radius $r$ and center $\left(
0,Du\left(  0\right)  \right)  $ in $\mathbb{R}^{3}\times\mathbb{R}^{3}$, and
$B_{r}$ is the ball with radius $r$ and center $0$ in $\mathbb{R}^{3}.$ Choose
a cut-off function $\varphi\in C_{0}^{\infty}\left(  B_{2}\right)  $ such that
$\varphi\geq0,$ $\varphi=1$ on $B_{1},$ and $\left\vert D\varphi\right\vert
\leq1.1,$ we then have
\[
\left(  \int_{B_{1}}b^{3}dv_{g}\right)  ^{1/3}\leq\left(  \int_{B_{2}}%
\varphi^{6}b^{3}dv_{g}\right)  ^{1/3}=\left(  \int_{B_{2}}\left(  \varphi
b^{1/2}\right)  ^{6}dv_{g}\right)  ^{1/3}.
\]
Applying the Sobolev inequality on the minimal surface $\mathfrak{M}$ [MS,
Theorem 2.1] or [A, Theorem 7.3] to $\varphi b^{1/2},$ which we may assume to
be $C^{1}$ by approximation, we obtain
\[
\left(  \int_{B_{2}}\left(  \varphi b^{1/2}\right)  ^{6}dv_{g}\right)
^{1/3}\leq C\left(  3\right)  \int_{B_{2}}\left\vert \nabla_{g}\left(  \varphi
b^{1/2}\right)  \right\vert ^{2}dv_{g}.
\]
Splitting the integrand as follows
\begin{align*}
\left\vert \nabla_{g}\left(  \varphi b^{1/2}\right)  \right\vert ^{2}  &
=\left\vert \frac{1}{2b^{1/2}}\varphi\nabla_{g}b+b^{1/2}\nabla_{g}%
\varphi\right\vert ^{2}\leq\frac{1}{2b}\varphi^{2}\left\vert \nabla
_{g}b\right\vert ^{2}+2b\left\vert \nabla_{g}\varphi\right\vert ^{2}\\
&  \leq\frac{1}{2}\varphi^{2}\left\vert \nabla_{g}b\right\vert ^{2}%
+2b\left\vert \nabla_{g}\varphi\right\vert ^{2},
\end{align*}
where we used $b\geq1,$ we get
\begin{align*}
b\left(  0\right)   &  \leq C\left(  3\right)  \int_{B_{2}}\left\vert
\nabla_{g}\left(  \varphi b^{1/2}\right)  \right\vert ^{2}dv_{g}\\
&  \leq C\left(  3\right)  \left(  \int_{B_{2}}\varphi^{2}\left\vert
\nabla_{g}b\right\vert ^{2}dv_{g}+\int_{B_{2}}b\left\vert \nabla_{g}%
\varphi\right\vert ^{2}dv_{g}\right) \\
&  \leq\underset{\text{Step\ 2}}{\underbrace{C\left(  3\right)  \left\Vert
Du\right\Vert _{L^{\infty}\left(  B_{2}\right)  }}}+C\left(  3\right)
\underset{\text{step\ 3}}{\underbrace{\left[  \left\Vert Du\right\Vert
_{L^{\infty}\left(  B_{3}\right)  }^{2}+\left\Vert Du\right\Vert _{L^{\infty
}\left(  B_{4}\right)  }^{3}\right]  }}.
\end{align*}

Step 2. By (\ref{IJacobi-3}) in Proposition \ref{PIJacobi}, $b$ satisfies the
Jacobi inequality in the integral sense:
\[
3\bigtriangleup_{g}b\geq\left|  \nabla_{g}b\right|  ^{2}.
\]
Multiplying both sides by the above non-negative cut-off function $\varphi\in
C_{0}^{\infty}\left(  B_{2}\right)  ,$ then integrating, we obtain
\begin{align*}
\int_{B_{2}}\varphi^{2}\left|  \nabla_{g}b\right|  ^{2}dv_{g}  &  \leq
3\int_{B_{2}}\varphi^{2}\bigtriangleup_{g}bdv_{g}\\
&  =-3\int_{B_{2}}\left\langle 2\varphi\nabla_{g}\varphi,\nabla_{g}%
b\right\rangle dv_{g}\\
&  \leq\frac{1}{2}\int_{B_{2}}\varphi^{2}\left|  \nabla_{g}b\right|
^{2}dv_{g}+18\int_{B_{2}}\left|  \nabla_{g}\varphi\right|  ^{2}dv_{g}.
\end{align*}
It follows that
\begin{equation}
\int_{B_{2}}\varphi^{2}\left|  \nabla_{g}b\right|  ^{2}dv_{g}\leq36\int
_{B_{2}}\left|  \nabla_{g}\varphi\right|  ^{2}dv_{g}. \label{jacobi outcome}%
\end{equation}
Observe the (``conformality'')\ identity:
\[
\left(  \frac{1}{1+\lambda_{1}^{2}},\frac{1}{1+\lambda_{2}^{2}},\frac
{1}{1+\lambda_{3}^{2}}\right)  V=\left(  \sigma_{1}-\lambda_{1,}\text{
\ }\sigma_{1}-\lambda_{2,}\text{ \ }\sigma_{1}-\lambda_{3}\right)
\]
where we used the identity $V=%
{\displaystyle\prod\limits_{i=1}^{3}}
\sqrt{\left(  1+\lambda_{i}^{2}\right)  }=\sigma_{1}-\sigma_{3}$ with
$\sigma_{2}=1$. \ We then have
\begin{align}
\left|  \nabla_{g}\varphi\right|  ^{2}dv_{g}  &  =\sum_{i=1}^{3}\frac{\left(
D_{i}\varphi\right)  ^{2}}{1+\lambda_{i}^{2}}Vdx=\sum_{i=1}^{3}\left(
D_{i}\varphi\right)  ^{2}\left(  \sigma_{1}-\lambda_{i}\right)
dx\label{conformality}\\
&  \leq2.42\bigtriangleup u\ dx.\nonumber
\end{align}
Thus
\begin{gather*}
\int_{B_{2}}\varphi^{2}\left|  \nabla_{g}b\right|  ^{2}dv_{g}\leq C\left(
3\right)  \int_{B_{2}}\bigtriangleup u\ dx\\
\leq C\left(  3\right)  \left\|  Du\right\|  _{L^{\infty}\left(  B_{2}\right)
}.
\end{gather*}

Step 3. By (\ref{conformality}), we get
\[
\int_{B_{2}}b\left\vert \nabla_{g}\varphi\right\vert ^{2}dv_{g}\leq C\left(
3\right)  \int_{B_{2}}b\bigtriangleup u\ dx.
\]
Choose another cut-off function $\psi\in C_{0}^{\infty}\left(  B_{3}\right)  $
such that $\psi\geq0,$ $\psi=1$ on $B_{2},$ and $\left\vert D\psi\right\vert
\leq1.1.$ We have
\begin{align*}
\int_{B_{2}}b\bigtriangleup udx  &  \leq\int_{B_{3}}\psi b\bigtriangleup
udx=\int_{B_{3}}-\left\langle bD\psi+\psi Db,Du\right\rangle dx\\
&  \leq\left\Vert Du\right\Vert _{L^{\infty}\left(  B_{3}\right)  }\int
_{B_{3}}\left(  b\left\vert D\psi\right\vert +\psi\left\vert Db\right\vert
\right)  dx\\
&  \leq C\left(  3\right)  \left\Vert Du\right\Vert _{L^{\infty}\left(
B_{3}\right)  }\int_{B_{3}}\left(  b+\left\vert Db\right\vert \right)  dx.
\end{align*}
Now
\[
b=\max\left\{  \ln\sqrt{1+\lambda_{\max}^{2}},\ K\right\}  \leq\lambda_{\max
}+K<\lambda_{1}+\lambda_{2}+\lambda_{3}+K=\bigtriangleup u+K,
\]
where $\lambda_{2}+\lambda_{3}>0$ follows from $\arctan\lambda_{2}%
+\arctan\lambda_{3}=\frac{\pi}{2}-\arctan\lambda_{1}>0.$ Hence
\[
\int_{B_{3}}bdx\leq C(3)(1+\left\Vert Du\right\Vert _{L^{\infty}\left(
B_{3}\right)  }).
\]
And we have left to estimate $\int_{B_{3}}\left\vert Db\right\vert dx:$
\begin{align*}
\ \int_{B_{3}}\left\vert Db\right\vert dx  &  \leq\int_{B_{3}}\sqrt{\sum
_{i=1}^{3}\frac{\left(  b_{i}\right)  ^{2}}{\left(  1+\lambda_{i}^{2}\right)
}\left(  1+\lambda_{1}^{2}\right)  \left(  1+\lambda_{2}^{2}\right)  \left(
1+\lambda_{3}^{2}\right)  }\ dx\\
&  =\int_{B_{3}}\left\vert \nabla_{g}b\right\vert Vdx\\
&  \leq\left(  \int_{B_{3}}\left\vert \nabla_{g}b\right\vert ^{2}Vdx\right)
^{1/2}\left(  \int_{B_{3}}Vdx\right)  ^{1/2}.
\end{align*}
Repeating the \textquotedblleft Jacobi\textquotedblright\ argument from Step
2, we see
\[
\int_{B_{3}}\left\vert \nabla_{g}b\right\vert ^{2}Vdx\leq C\left(  3\right)
\left\Vert Du\right\Vert _{L^{\infty}\left(  B_{4}\right)  }.
\]
Then by the Sobolev inequality on the minimal surface $\mathfrak{M},$ we have
\[
\int_{B_{3}}Vdx=\int_{B_{3}}dv_{g}\leq\int_{B_{4}}\phi^{6}dv_{g}\leq C\left(
3\right)  \left(  \int_{B_{4}}\left\vert \nabla_{g}\phi\right\vert ^{2}%
dv_{g}\right)  ^{3},
\]
where the non-negative cut-off function $\phi\in C_{0}^{\infty}\left(
B_{4}\right)  $ satisfies $\phi=1$ on $B_{3},$ and $\left\vert D\phi
\right\vert \leq1.1.$ Applying the conformality equality (\ref{conformality})
again, we obtain
\[
\int_{B_{4}}\left\vert \nabla_{g}\phi\right\vert ^{2}dv_{g}\leq C\left(
3\right)  \int_{B_{4}}\bigtriangleup u\ dx\leq C\left(  3\right)  \left\Vert
Du\right\Vert _{L^{\infty}\left(  B_{4}\right)  }.
\]
Thus we get
\[
\int_{B_{3}}Vdx\leq C\left(  3\right)  \left\Vert Du\right\Vert _{L^{\infty
}\left(  B_{4}\right)  }^{3}%
\]
and
\[
\int_{B_{3}}\left\vert Db\right\vert dx\leq C\left(  3\right)  \left\Vert
Du\right\Vert _{L^{\infty}\left(  B_{4}\right)  }^{2}.
\]
In turn, we obtain
\[
\int_{B_{2}}b\left\vert \nabla_{g}\varphi\right\vert ^{2}dv_{g}\leq C\left(
3\right)  \left[  K\left\Vert Du\right\Vert _{L^{\infty}\left(  B_{3}\right)
}+\left\Vert Du\right\Vert _{L^{\infty}\left(  B_{3}\right)  }^{2}+\left\Vert
Du\right\Vert _{L^{\infty}\left(  B_{4}\right)  }^{3}\right]  .
\]
Finally collecting all the estimates in the above three steps, we arrive at
\begin{align}
\lambda_{\max}\left(  0\right)   &  \leq\exp\left[  C\left(  3\right)  \left(
\left\Vert Du\right\Vert _{L^{\infty}\left(  B_{4}\right)  }+\left\Vert
Du\right\Vert _{L^{\infty}\left(  B_{4}\right)  }^{2}+\left\Vert Du\right\Vert
_{L^{\infty}\left(  B_{4}\right)  }^{3}\right)  \right]
\label{sigma2 estimate}\\
&  \leq C\left(  3\right)  \exp\left[  C\left(  3\right)  \left\Vert
Du\right\Vert _{L^{\infty}\left(  B_{4}\right)  }^{3}\right]  .\nonumber
\end{align}
This completes the proof of Theorem 1.2.

\section{Proof Of Theorem 1.1}

As in the proof of Theorem 1.2, we assume that $R=8$ and $u$ is a solution on
$B_{8}\subset\mathbb{R}^{3}$ for simplicity of notation. By scaling $v\left(
x\right)  =u\left(  \frac{R}{8}x\right)  /\left(  \frac{R}{8}\right)  ^{2},$
we still get the estimate in Theorem 1.1. We consider the cases when
$\Theta=\pi/2+\delta$ for $\delta\in\left(  0,\pi\right)  .$\ The cases
$\Theta<-\pi/2$ follow by symmetry. \ 

Step 1. As preparation for the proof of Theorem 1.2, we take the phase $\pi/2$
representation $\mathfrak{M}=\left(  \bar{x},D\bar{u}\left(  \bar{x}\right)
\right)  $ in Proposition \ref{Prop rotation} for the original special
Lagrangian graph $\mathfrak{M}=\left(  x,Du\left(  x\right)  \right)  $ with
$x\in B_{8}.$ The \textquotedblleft critical\textquotedblright\ representation
is\
\begin{equation}
\left\{
\begin{array}
[c]{c}%
\bar{x}=x\cos\frac{\delta}{3}+Du\left(  x\right)  \sin\frac{\delta}{3}\\
D\bar{u}=-x\sin\frac{\delta}{3}+Du\left(  x\right)  \cos\frac{\delta}{3}%
\end{array}
\right.  . \label{rotation}%
\end{equation}
Define
\[
\bar{\Omega}_{r}=\bar{x}(B_{r}(0)).
\]
Then we have from (\ref{lbonrad})
\begin{equation}
\text{dist}(\bar{\Omega}_{1},\partial\bar{\Omega}_{5})\geq\frac{4}{2\cos
\delta/3}>2. \label{distance}%
\end{equation}
We see from (\ref{rotation}) that $\left\vert \bar{x}\right\vert \leq\rho$ for
$\bar{x}\in$ $\bar{\Omega}_{8}$ with
\begin{equation}
\rho=8\cos\frac{\delta}{3}+\left\Vert Du\right\Vert _{L^{\infty}\left(
B_{8}\right)  }\sin\frac{\delta}{3} \label{define M}%
\end{equation}
and that $\left\vert D\bar{u}\left(  \bar{x}\right)  \right\vert \leq\kappa$
(for $\bar{x}\in\bar{\Omega}_{8}$) with
\begin{equation}
\kappa=8\sin\frac{\delta}{3}+\left\Vert Du\right\Vert _{L^{\infty}\left(
B_{8}\right)  }\cos\frac{\delta}{3}. \label{define H}%
\end{equation}
The eigenvalues of the new potential $\bar{u}$ satisfy (\ref{automatic bound})
and the interior Hessian bound by Theorem 1.2%
\[
\left\vert D^{2}\bar{u}\left(  \bar{x}\right)  \right\vert \leq C\left(
3\right)  \exp\left[  C\left(  3\right)  \kappa^{3}\right]
\]
for $\bar{x}\in\bar{\Omega}_{5}.$ It follows that the induced metric on
$\mathfrak{M}=(\bar{x},D\bar{u}(\bar{x}))\,$in $\bar{x}$ coordinates is
bounded on $\bar{\Omega}_{5}$ by%
\begin{equation}
d\bar{x}^{2}\leq\bar{g}\left(  \bar{x}\right)  \leq\mu(\kappa,\delta)d\bar
{x}^{2}, \label{metric bound h}%
\end{equation}
where%
\begin{equation}
\mu(\kappa,\delta)=\min\left\{  1+C(3)\exp\left[  C(3)\kappa^{3}\right]
,\left[  1+\left(  \cot\frac{\delta}{3}\right)  ^{2}\right]  \right\}  .
\label{define h}%
\end{equation}

Step 2. Relying on the above set-up and the relative isoperimetric inequality
in Proposition \ref{Prop relative isoperimetric}, we proceed with the
following Sobolev inequality for functions without compact support.

\begin{proposition}
Let $u$ be a smooth solution to (\ref{EsLag}) with \ $\Theta=\pi/2+\delta$ on
$B_{5}(0)\subset$ $\mathbb{R}^{3}.$ Let $f$ be a smooth positive function on
the special Lagrangian surface $\mathfrak{M=}\left(  x,Du\left(  x\right)
\right)  .$ Then%
\[
\left[  \int_{B_{1}}\left\vert (f-\iota)^{+}\right\vert ^{3/2}dv_{g}\right]
^{2/3}\leq C(3)\rho^{4}\mu(\kappa,\delta)\int_{B_{5}}\left\vert \nabla
_{g}(f-\iota)^{+}\right\vert dv_{g},
\]
where $\rho,$ $\kappa,$ and $\mu$ were defined in (\ref{define M}),
(\ref{define H}), and (\ref{define h}); also $\iota=\int_{B_{5}(0)}fdx.$
\end{proposition}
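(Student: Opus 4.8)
The plan is to transfer the problem to the critical-phase representation $\mathfrak{M}=(\bar x,D\bar u(\bar x))$ from Proposition \ref{Prop rotation}, apply the classical relative isoperimetric inequality there, and then pull the resulting estimate back to the original $x$ coordinates. First I would fix $x^{\ast}$ realizing $\iota=\int_{B_5}f\,dx$ and, using the $L^1$ bound for $(f-\iota)^+$ that follows from the mean value interpretation of $\iota$, choose a level $t$ of $(f-\iota)^+$ via the coarea formula so that the superlevel set $A=\{(f-\iota)^+>t\}\cap B_5$ has comparable measure on both $B_1$ and $B_5$, and so that $\int_{B_5}|\nabla_g(f-\iota)^+|\,dv_g$ controls $|\partial A\cap\partial A^c|$ (the perimeter measured in the induced metric $g$). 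This is the standard reduction of a Sobolev-type inequality for non-compactly-supported functions to an isoperimetric inequality for a set.

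Next, I would work inside $\bar\Omega_5=\bar x(B_5)$, which by (\ref{distance}) satisfies $\mathrm{dist}(\bar\Omega_1,\partial\bar\Omega_5)\geq 2$ and is contained in the ball $B_\rho$ in $\bar x$-space by (\ref{define M}). The metric-comparison (\ref{metric bound h})–(\ref{define h}) lets me compare the Riemannian volume and perimeter of $A$ (in $g$, equivalently $\bar g$ since these are the same surface) with the Euclidean volume and perimeter of $\bar x(A)$ in $\bar x$-coordinates: volumes change by a factor at most $\mu^{3/2}$ and perimeters by a factor at most $\mu$. Applying Proposition \ref{Prop relative isoperimetric} to $\bar x(A)$ and $\bar x(A^c)$ inside $\bar\Omega_1\subset\bar\Omega_5\subset B_\rho$ gives
\[
\min\{|\bar x(A)\cap\bar\Omega_1|,\ |\bar x(A^c)\cap\bar\Omega_1|\}\leq C(3)\,\rho^{3}\,|\partial\bar x(A)\cap\partial\bar x(A^c)|^{3/2}.
\]
Feeding in the two-sided metric bounds converts this back into a statement about $A$ measured in $g$, picking up the factor $\mu(\kappa,\delta)$ (with the exponent arranged so the $\mu$-powers combine to the single power claimed). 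Here I must also check that $\bar x(B_1)\supset\bar\Omega_1$-type containments line up, i.e. that the region where I want the conclusion ($B_1$ in $x$) maps into $\bar\Omega_1$ where the isoperimetric estimate is available; this follows from the diffeomorphism property and the radius bound in Proposition \ref{Prop rotation}.

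Finally I would integrate back over levels: the coarea formula plus the isoperimetric bound on each superlevel set yields, by the usual Federer–Fleming/Maz'ya argument, the $L^{3/2}$–$L^1$ Sobolev inequality
\[
\Bigl[\int_{B_1}|(f-\iota)^+|^{3/2}\,dv_g\Bigr]^{2/3}\leq C(3)\,\rho^{4}\,\mu(\kappa,\delta)\int_{B_5}|\nabla_g(f-\iota)^+|\,dv_g,
\]
where one extra power of $\rho$ beyond the $\rho^3$ of Proposition \ref{Prop relative isoperimetric} appears because passing from the $3/2$-exponent isoperimetric inequality to the $L^{3/2}$ norm (rather than the weak-$L^{3/2}$ quasinorm) costs a factor controlled by $|\bar\Omega_1|^{1/3}\leq C\rho$. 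The main obstacle is the careful bookkeeping in this last step: one must keep the two competing cases in the "$\min$" of Proposition \ref{Prop relative isoperimetric} under control simultaneously for all levels $t$ (so that $(f-\iota)^+$, which vanishes on a set of definite measure because $\iota$ is the average of $f$, always lands on the "small" side), and one must verify that the metric distortion factor $\mu$ enters linearly rather than with a larger power — this is exactly where the two-sided bound $d\bar x^2\leq\bar g\leq\mu\,d\bar x^2$, rather than a one-sided bound, is essential.
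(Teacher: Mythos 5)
Your overall strategy matches the paper's: pass to the critical-phase representation via Lewy rotation, apply Proposition \ref{Prop relative isoperimetric} in the $\bar x$-coordinates, convert back to the induced metric via the two-sided bound (\ref{metric bound h}), then integrate over levels. The places where you zoom in on the difficulties are also the right ones. But the resolution you propose for the ``min'' issue is not what works, and your $\rho$-exponent bookkeeping does not close.

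First, on the ``min.'' You say the goal is to arrange that $A_t=\{f>t\}$ ``always lands on the small side'' of the min, because $\iota$ is the average and so $(f-\iota)^+$ vanishes on a set of definite measure. The average condition does give $|\{f\le t\}\cap B_1|>|B_1|-1>1$ (via Chebyshev), hence $|A_t^c\cap\bar\Omega_1|\geq 1/2^3$ after the Lipschitz lower bound (\ref{lbonrad}). But this does \emph{not} force $|A_t\cap\bar\Omega_1|\leq|A_t^c\cap\bar\Omega_1|$, since $\bar\Omega_1$ can have volume of order $\rho^3\gg 1$ and nothing prevents $A_t$ from occupying most of it. The paper does not avoid the bad case; it absorbs it, using precisely the uniform lower bound $|A_t^c\cap\bar\Omega_1|\geq 1/8$ to write $|A_t\cap\bar\Omega_1|\leq C(3)\rho^3\leq C(3)\rho^3|A_t^c\cap\bar\Omega_1|$ and then applying the isoperimetric bound to the small side $A_t^c$. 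This is exactly where a second factor of $\rho^3$ enters, yielding the per-level estimate (\ref{isop}) of the form $|A_t\cap B_1|_g\leq C(3)\rho^6\big[\mu\,|\{f=t\}\cap B_5|_g\big]^{3/2}$.

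Second, the $\rho$ arithmetic. You claim the extra power of $\rho$ (beyond the $\rho^3$ of Proposition \ref{Prop relative isoperimetric}) comes from passing from the weak to the strong $L^{3/2}$ norm, at a cost $|\bar\Omega_1|^{1/3}\leq C\rho$. That is both unnecessary and insufficient: the paper integrates over levels via the Hardy--Littlewood--P\'olya inequality, which is a genuine inequality introducing no extra volume factor; and $(\rho^3)^{2/3}=\rho^2$ plus your extra $\rho$ would only give $\rho^3$, not the stated $\rho^4$. The correct accounting is $(\rho^6\mu^{3/2})^{2/3}=\rho^4\mu$, with the $\rho^6$ coming from the bad case discussed above. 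So the argument needs the bad-case estimate you tried to rule out, not a norm-passage loss, and without it your proof does not deliver the stated power of $\rho$.

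One small imprecision: you say perimeters change by a factor at most $\mu$. What is used is only the one-sided inequality $|\partial A_t\cap\partial A_t^c|\leq|\partial A_t\cap\partial A_t^c|_{\bar g}$ (from $\bar g\geq d\bar x^2$), paired with the upper bound $|\cdot|_{\bar g}\leq\mu^{3/2}|\cdot|$ for volumes; the exponent $1$ on $\mu$ in the conclusion then falls out of $(\mu^{3/2})^{2/3}$, which is the part of your sketch that is correct.
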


\begin{proof}
Step 2.1. Let $M=||f||_{L^{\infty}(B_{1})}.$ We may assume $\iota<M.$ \ By
Sard's theorem, the level set $\{x|\ f(x)=t\}$ is $C^{1}$ for almost all $t.$
\ We first show that for all such $t$ $\in\lbrack\iota,M],$%
\begin{equation}
|\{x|\ f(x)>t\}\cap B_{1}|_{g}\leq C(3)\rho^{6}\left[  \mu(\kappa
,\delta)|\{x|\ f(x)=t\}\cap B_{5}|_{g}\right]  ^{3/2}. \label{isop}%
\end{equation}
(Here $\left\vert \ \right\vert _{g}$ and $\left\vert \ \right\vert _{\bar{g}%
}$ denote the area or volume with respect to the induced metric; $\left\vert
\ \right\vert $ denotes the ones with respect to the Euclidean metric as in
Lemma \ref{Lem poincare 2} and Proposition \ref{Prop relative isoperimetric} .)

From $t>\int_{B_{5}}fdx,$ it follows that $\left\vert \{x|\ f(x)>t\}\cap
B_{1}\right\vert <1$ and consequently
\begin{equation}
\left\vert \{x|\ f(x)\leq t\}\cap B_{1}\right\vert >\left\vert B_{1}%
\right\vert -1>1. \label{level set not small}%
\end{equation}
Now we use instead the coordinates for $\mathfrak{M}=(\bar{x},D\bar{u}(\bar
{x}))$ given by the Lewy rotation (\ref{rotation}). Let
\[
A_{t}=\{\bar{x}|\ f(\bar{x})>t\}\cap\bar{\Omega}_{5},
\]
where we are treating $f$ as a function on the special Lagrangian surface
$\mathfrak{M}.$ \ Applying Proposition \ref{Prop relative isoperimetric} with
(\ref{distance}) and (\ref{define M}), we see that
\[
\min\left\{  \left\vert A_{t}\cap\bar{\Omega}_{1}\right\vert ,\left\vert
A_{t}^{c}\cap\bar{\Omega}_{1}\right\vert \right\}  \leq C(3)\rho^{3}\left\vert
\partial A_{t}\cap\partial A_{t}^{c}\right\vert ^{3/2}.
\]
\ Now either $\left\vert A_{t}\cap\bar{\Omega}_{1}\right\vert \leq\left\vert
A_{t}^{c}\cap\bar{\Omega}_{1}\right\vert ,$ or vice versa.

If $\left\vert A_{t}\cap\bar{\Omega}_{1}\right\vert \leq\left\vert A_{t}%
^{c}\cap\bar{\Omega}_{1}\right\vert ,$ then we have from (\ref{metric bound h}%
)
\begin{align*}
\left\vert A_{t}\cap\bar{\Omega}_{1}\right\vert _{\bar{g}}  &  \leq\left[
\mu(\kappa,\delta)\right]  ^{3/2}\left\vert A_{t}\cap\bar{\Omega}%
_{1}\right\vert \\
&  \leq C(3)\rho^{3}\left[  \mu(\kappa,\delta)\right]  ^{3/2}\left\vert
\partial A_{t}\cap\partial A_{t}^{c}\right\vert ^{3/2}\\
&  \leq C(3)\rho^{3}\left[  \mu(\kappa,\delta)\right]  ^{3/2}\left\vert
\partial A_{t}\cap\partial A_{t}^{c}\right\vert _{\bar{g}}^{3/2}.
\end{align*}

Otherwise, if $\left\vert A_{t}\cap\bar{\Omega}_{1}\right\vert >\left\vert
A_{t}^{c}\cap\bar{\Omega}_{1}\right\vert ,$ still we have that
\[
\left\vert A_{t}\cap\bar{\Omega}_{1}\right\vert \leq C(3)\rho^{3}\left\vert
A_{t}^{c}\cap\bar{\Omega}_{1}\right\vert
\]
as $\left\vert A_{t}^{c}\cap\bar{\Omega}_{1}\right\vert \geq1/2^{3}$ from
(\ref{level set not small}) and (\ref{lbonrad}), and $\rho\geq8\cos\frac{\pi
}{3}$ from (\ref{define M}). Thus
\begin{align*}
\left\vert A_{t}\cap\bar{\Omega}_{1}\right\vert _{\bar{g}}  &  \leq\left[
\mu(\kappa,\delta)\right]  ^{3/2}C(3)\rho^{3}\left\vert A_{t}^{c}\cap
\bar{\Omega}_{1}\right\vert \\
&  \leq C(3)\rho^{6}\left[  \mu(\kappa,\delta)\right]  ^{3/2}\left\vert
\partial A_{t}\cap\partial A_{t}^{c}\right\vert _{\bar{g}}^{3/2}.
\end{align*}
In either case we have the desired isoperimetric inequality (now given in the
new coordinates for $\mathfrak{M}$ ) which holds for $\iota<t<M$
\[
\left\vert A_{t}\cap\bar{\Omega}_{1}\right\vert _{\bar{g}}\leq C(3)\rho
^{6}\left[  \mu(\kappa,\delta)\left\vert \partial A_{t}\cap\partial A_{t}%
^{c}\right\vert _{\bar{g}}\right]  ^{3/2},
\]
or equivalently (\ref{isop}) in the original coordinates.

Step 2.2. \ With this isoperimetric inequality in hand, the following proof is
standard (cf. [LY, Theorem 5.3.1]).
\begin{align*}
\left[  \int_{B_{1}}\left\vert (f-\iota)^{+}\right\vert ^{3/2}dv_{g}\right]
^{2/3}  &  =\left(  \int_{0}^{M-\iota}\left\vert \{x|\ f(x)-\iota>t\}\cap
B_{1}\right\vert _{g}dt^{3/2}\right)  ^{2/3}\\
&  \leq\int_{0}^{M-\iota}\left\vert \left\{  x|\ f(x)-\iota>t\}\cap
B_{1}\right\}  \right\vert _{g}^{2/3}dt\\
&  \leq C(3)\rho^{4}\mu(\kappa,\delta)\int_{0}^{M-\iota}|\{x|\ f(x)=t\}\cap
B_{5}|_{g}dt\\
&  \leq C(3)\rho^{4}\mu(\kappa,\delta)\int_{B_{5}}\left\vert \nabla
_{g}(f-\iota)^{+}\right\vert dv_{g},
\end{align*}
where the last inequality followed from the the coarea formula; the second
inequality from (\ref{isop}); and the first inequality from the
Hardy-Littlewood-Polya inequality for any nonnegative, nonincreasing integrand
$\eta\left(  t\right)  :$%
\[
\left[  \int_{0}^{T}\eta\left(  t\right)  ^{q}dt^{q}\right]  ^{1/q}\leq
\int_{0}^{T}\eta\left(  t\right)  dt.
\]
This H-L-P inequality (with $q>1$) is proved by noting that $s\eta\left(
s\right)  \leq\int_{0}^{s}\eta\left(  t\right)  dt$ and integrating the
inequality%
\[
q\left[  s\eta\left(  s\right)  \right]  ^{q-1}\eta\left(  s\right)  \leq
q\left[  \int_{0}^{s}\eta\left(  t\right)  dt\right]  ^{q-1}\eta\left(
s\right)  =\frac{d}{ds}\left[  \int_{0}^{s}\eta\left(  t\right)  dt\right]
^{q}.
\]
The proposition is thus proved.
\end{proof}

Step 3. We continue the proof of Theorem 1.1. As in the proof of Theorem 1.2,
we take
\[
b=\max\left\{  \ln\sqrt{1+\lambda_{\max}^{2}},\ K_{\Theta}\right\}
\ \ \ \text{with\ \ }\ K_{\Theta}=1+\ln\sqrt{1+\tan^{2}\left(  \frac{\Theta
}{3}\right)  }.
\]
Based on Proposition 2.1, a calculation similar to
(\ref{convex function also subharmonic}) shows that the Lipschitz function
$\left[  (b-\iota)^{+}\right]  ^{3/2}$ is weakly subharmonic, where
$\iota=\int_{B_{5}(0)}bdx.$ We apply Michael and Simon's mean value inequality
[MS, Theorem 3.4] to obtain
\begin{align*}
(b-\iota)^{+}(0)  &  \leq C(3)\left[  \int_{B_{1}}\left\vert (b-\iota
)^{+}\right\vert ^{3/2}dv_{g}\right]  ^{2/3}\\
&  \leq C(3)\rho^{4}\mu(\kappa,\delta)\int_{B_{5}}\left\vert \nabla
_{g}(b-\iota)^{+}\right\vert dv_{g},
\end{align*}
where the second inequality follows from Proposition 4.1, approximating
$(b-\iota)^{+}$ by smooth functions if necessary. Thus
\begin{align}
b(0)  &  \leq C(3)\rho^{4}\mu(\kappa,\delta)\int_{B_{5}}\left\vert \nabla
_{g}b\right\vert dv_{g}+\int_{B_{5}}bdx\nonumber\\
&  \leq C(3)\rho^{4}\mu(\kappa,\delta)\left(  \int_{B_{5}}\left\vert
\nabla_{g}b\right\vert ^{2}dv_{g}\right)  ^{1/2}\left(  \int_{B_{5}%
}Vdx\right)  ^{1/2}+\int_{B_{5}}Vdx\nonumber\\
&  \leq C(3)\rho^{4}\mu(\kappa,\delta)\int_{B_{6}}Vdx \label{b in terms of V}%
\end{align}
where we have used the Jacobi inequality in Proposition 2.1, and a similar
calculation leading to (\ref{jacobi outcome}) in the proof of \ Theorem 1.2. \ 

Step 4. We finish the proof of Theorem 1.1 by bounding $\int_{B_{6}}Vdx.$
Observe%
\[
V=\left\vert \left(  1+\sqrt{-1}\lambda_{1}\right)  \cdots\left(  1+\sqrt
{-1}\lambda_{3}\right)  \right\vert =\frac{\sigma_{2}-1}{\left\vert \cos
\Theta\right\vert }>0.
\]
We control the integral of $\sigma_{2}$ in the following.%
\begin{align*}
\int_{B_{r}}\sigma_{2}dx  &  =\int_{B_{r}}\frac{1}{2}\left[  \left(
\bigtriangleup u\right)  ^{2}-\left\vert D^{2}u\right\vert ^{2}\right]  dx\\
&  =\frac{1}{2}\int_{B_{r}}div\left[  \left(  \bigtriangleup uI-D^{2}u\right)
Du\right]  dx\\
&  =\frac{1}{2}\int_{\partial B_{r}}\left\langle \left(  \bigtriangleup
uI-D^{2}u\right)  Du,\nu\right\rangle dA,
\end{align*}
where $\nu$ is the outward normal of $B_{r}.$ Diagonalizing $D^{2}u,$ we see
easily that%
\[
\bigtriangleup uI-D^{2}u=\left[
\begin{array}
[c]{ccc}%
\lambda_{2}+\lambda_{3} &  & \\
& \lambda_{3}+\lambda_{1} & \\
&  & \lambda_{1}+\lambda_{2}%
\end{array}
\right]  >0
\]
as $\theta_{i}+\theta_{j}>0$ with $\theta_{1}+\theta_{2}+\theta_{3}\geq\pi/2.$
Then%
\[
\int_{B_{r}}\sigma_{2}dx\leq\left\Vert Du\right\Vert _{L^{\infty}\left(
\partial B_{r}\right)  }\int_{\partial B_{r}}\bigtriangleup udA.
\]
Integrating the boundary integral from $r=6$ to $r=7,$ we get%
\begin{align*}
\int_{B_{6}}\sigma_{2}dx  &  \leq\left\Vert Du\right\Vert _{L^{\infty}\left(
B_{7}\right)  }\min_{r\in\left[  6,7\right]  }\int_{\partial B_{r}%
}\bigtriangleup udA\\
&  \leq\left\Vert Du\right\Vert _{L^{\infty}\left(  B_{7}\right)  }\int
_{B_{7}}\bigtriangleup udx\\
&  \leq C\left(  3\right)  \left\Vert Du\right\Vert _{L^{\infty}\left(
B_{7}\right)  }^{2}.
\end{align*}
It follows that for $\Theta\geq\pi/2$%
\begin{align*}
\int_{B_{6}}Vdx  &  =\frac{1}{\left\vert \cos\Theta\right\vert }\int_{B_{6}%
}\left(  \sigma_{2}-1\right)  dx\leq\frac{1}{\left\vert \cos\Theta\right\vert
}\int_{B_{6}}\sigma_{2}dx\\
&  \leq\frac{C\left(  3\right)  }{\left\vert \cos\Theta\right\vert }\left\Vert
Du\right\Vert _{L^{\infty}\left(  B_{7}\right)  }^{2}%
\end{align*}
or%
\begin{equation}
\int_{B_{6}}Vdx\leq\frac{C\left(  3\right)  }{\left\vert \cos\Theta\right\vert
\left\Vert Du\right\Vert _{L^{\infty}\left(  B_{8}\right)  }}\left\Vert
Du\right\Vert _{L^{\infty}\left(  B_{8}\right)  }^{3}. \label{volume theta}%
\end{equation}

In order to get $\Theta$-independent control on the volume, we estimate the
volume in another way. By the Sobolev inequality on the minimal surface
$\mathfrak{M}$ [MS, Theorem 2.1] or [A, Theorem 7.3], we have%
\[
\int_{B_{6}}Vdx=\int_{B_{6}}dv_{g}\leq\int_{B_{7}}\phi^{6}dv_{g}\leq C\left(
3\right)  \left[  \int_{B_{7}}\left\vert \nabla_{g}\phi\right\vert ^{2}%
dv_{g}\right]  ^{3},
\]
where the nonnegative cut-off function $\phi\in C_{0}^{\infty}\left(
B_{7}\right)  $ satisfies $\phi=1$ on $B_{6}$ and $\left\vert D\phi\right\vert
\leq1.1.$

Observe the (conformality) identity again%
\begin{gather*}
\left(  \frac{1}{1+\lambda_{1}^{2}},\cdots,\frac{1}{1+\lambda_{3}^{2}}\right)
V=\\
\left(  \sin\Theta\left(  \sigma_{1}-\lambda_{1}\right)  +\cos\Theta\left(
1-\frac{\sigma_{3}}{\lambda_{1}}\right)  ,\cdots,\sin\Theta\left(  \sigma
_{1}-\lambda_{3}\right)  +\cos\Theta\left(  1-\frac{\sigma_{3}}{\lambda_{3}%
}\right)  \right)  ,
\end{gather*}
which follows from differentiating the complex identity%
\[
\ln V+\sqrt{-1}\sum_{i=1}^{3}\arctan\lambda_{i}=\ln\left[  1-\sigma_{2}%
+\sqrt{-1}\left(  \sigma_{1}-\sigma_{3}\right)  \right]  .
\]
We then have%
\begin{align*}
\int_{B_{7}}\left\vert \nabla_{g}\phi\right\vert ^{2}dv_{g} &  =\int_{B_{7}%
}\sum_{i=1}^{3}\frac{\left\vert \phi_{i}\right\vert ^{2}}{1+\lambda_{i}^{2}%
}Vdx\\
&  \leq1.21\int_{B_{7}}\left[  2\sin\Theta\sigma_{1}+\cos\Theta\left(
3-\sigma_{2}\right)  \right]  dx\\
&  \leq C\left(  3\right)  \left[  \left\vert \sin\Theta\right\vert \left\Vert
Du\right\Vert _{L^{\infty}\left(  B_{8}\right)  }+\left\vert \cos
\Theta\right\vert \left\Vert Du\right\Vert _{L^{\infty}\left(  B_{8}\right)
}^{2}\right]
\end{align*}
for $\Theta\geq\pi/2,$ where we used the argument leading to
(\ref{volume theta}). Thus we get%
\begin{equation}
\int_{B_{6}}Vdx\leq C\left(  3\right)  \left[  \left\vert \sin\Theta
\right\vert \left\Vert Du\right\Vert _{L^{\infty}\left(  B_{8}\right)
}+\left\vert \cos\Theta\right\vert \left\Vert Du\right\Vert _{L^{\infty
}\left(  B_{8}\right)  }\left\Vert Du\right\Vert _{L^{\infty}\left(
B_{8}\right)  }\right]  ^{3}\label{volume uniform}%
\end{equation}
Now either  $\left\vert \cos\Theta\right\vert \left\Vert Du\right\Vert
_{L^{\infty}\left(  B_{8}\right)  }\leq1$ or $\left\vert \cos\Theta\right\vert
\left\Vert Du\right\Vert _{L^{\infty}\left(  B_{8}\right)  }>1;$ combining
(\ref{volume theta}) and (\ref{volume uniform}), we have in either case%
\[
\int_{B_{6}}Vdx\leq C\left(  3\right)  \left\Vert Du\right\Vert _{L^{\infty
}\left(  B_{8}\right)  }^{3}.
\]

Finally from the above inequality and (\ref{b in terms of V}), we conclude
\begin{align*}
b(0)  &  \leq C(3)\rho^{4}\mu(\kappa,\delta)\left\Vert Du\right\Vert
_{L^{\infty}\left(  B_{8}\right)  }^{3}\\
&  \leq C(3)\rho^{4}\left\Vert Du\right\Vert _{L^{\infty}\left(  B_{8}\right)
}^{3}\min\left\{  1+C(3)\exp\left[  C(3)\kappa^{3}\right]  ,1+\left(
\cot\frac{\delta}{3}\right)  ^{2}\right\}  .
\end{align*}
Exponentiating, and recalling (\ref{define M}), (\ref{define H}) and
(\ref{define h}), we have the $\Theta$-independent bound
\[
\left\vert D^{2}u(0)\right\vert \leq C(3)\exp\left\{  C(3)\exp\left[
C(3)\left\Vert Du\right\Vert _{L^{\infty}\left(  B_{8}\right)  }^{3}\right]
\right\}
\]
and the $\Theta$-dependent bound
\[
\left\vert D^{2}u(0)\right\vert \leq C(3)\exp\left\{  C(3)\left[  1+\left(
\cot\frac{\delta}{3}\right)  ^{2}\right]  \left[  1+\left\Vert Du\right\Vert
_{L^{\infty}\left(  B_{8}\right)  }\sin\frac{\delta}{3}\right]  ^{4}\left\Vert
Du\right\Vert _{L^{\infty}\left(  B_{8}\right)  }^{3}\right\}  .
\]
Simplifying the above expressions, we arrive at the conclusion of Theorem 1.1.

\section{Proof of Theorem 1.3}

We assume that $R=1$ by scaling $u\left(  Rx\right)  /R^{2},$ and $\Theta
\geq\left(  n-2\right)  \pi/2$ by symmetry.

\textbf{Case }$\Theta=\left(  n-2\right)  \pi/2.$ Set $M=\hbox{{osc}}_{B_{1}%
}u.$ We may assume $M>0.$ By replacing $u$ with $u-\min_{B_{1}}u+M,$ we have
$M\leq u\leq2M$ in $B_{1}.$ Let
\[
w=\eta\left\vert Du\right\vert +Au^{2}%
\]
with $\eta=1-\left\vert x\right\vert ^{2}$ and $A=n/M.$ We assume that $w$
attains its maximum at an interior point $x^{\ast}\in B_{1},$ otherwise $w$
would take its maximum on the boundary $\partial B_{1}$ and the conclusion
would be straightforward. Choose a coordinate system so that $D^{2}u$ is
diagonalized at $x^{\ast}.$ We assume, say $u_{n}\geq\frac{\left\vert
Du\right\vert }{\sqrt{n}}\left(  >0\right)  $ at $x^{\ast}.$ For all
$i=1,\cdots,n,$ we have at $x^{\ast}$
\[
0=w_{i}=\eta\left\vert Du\right\vert _{i}+\eta_{i}\left\vert Du\right\vert
+2Auu_{i},
\]
then
\begin{equation}
\frac{u_{i}u_{ii}}{\left\vert Du\right\vert }=\left\vert Du\right\vert
_{i}=-\frac{\eta_{i}\left\vert Du\right\vert +2Auu_{i}}{\eta}.
\label{E critical gradient}%
\end{equation}
In particular, we have $u_{nn}<0$ by the choice of $A.$ \ Since the phase
$\Theta\geq\left(  n-2\right)  \pi/2,$ it follows that $\lambda_{n}%
=\lambda_{\min},$ $\left\vert \lambda_{n}\right\vert \leq\lambda_{k},$ and \
\begin{equation}
g^{nn}=\frac{1}{1+\lambda_{n}^{2}}\geq\frac{1}{1+\lambda_{k}^{2}}=g^{kk}
\label{Inequ g^nn}%
\end{equation}
for $k=1,\cdots,n-1$ at $x^{\ast}.$

Next, we show
\[
\bigtriangleup_{g}u\geq0.
\]
\ When $D^{2}u$ is diagonalized,
\[
\bigtriangleup_{g}u=\sum_{i=1}^{n}g^{ii}u_{ii}=\sum_{i=1}^{n}\frac{\lambda
_{i}}{1+\lambda_{i}^{2}}=\frac{1}{2}\sum_{i=1}^{n}\sin(2\theta_{i}).
\]
Let $S\subset{\mathbb{R}}^{n}$ be the hypersurface (with boundary) given by
\[
S=\left\{  \theta\ |\ \theta_{1}+\theta_{2}+\cdots+\theta_{n}=\frac{\pi}%
{2}(n-2),|\theta_{i}|\leq\frac{\pi}{2}\right\}  \,,
\]
where $\theta=\left(  \theta_{1},\cdots,\theta_{n}\right)  .$ Set
$\Gamma(\theta)=\frac{1}{2}\sum_{i=1}^{n}\sin(2\theta_{i}).$ Suppose that
$\Gamma$ obtains a negative minimum on the interior of $S$ at $\theta^{\ast}.$
At this point $D\Gamma$ vanishes on $T_{\theta^{\ast}}S,$ thus we have%
\[
\cos(2\theta_{i})=\cos\left(  2\theta_{j}\right)  ,\ \text{then}\ \theta
_{i}=\pm\theta_{j}.
\]
The only two possible configurations for $\theta$ are%
\begin{align*}
\theta_{1}  &  =\cdots=\theta_{n}=\frac{\left(  n-2\right)  \pi}%
{2n}\ \ \text{or}\\
\theta_{1}  &  =\cdots=\theta_{n-2}=\frac{\pi}{2},\ \theta_{n-1}=-\theta_{n}.
\end{align*}
In either case, $\Gamma$ is nonnegative. This contradiction allows us to
verify the nonnegativity of $\Gamma$ along the boundary $\partial S.$ It
follows easily that $\Gamma\geq0$ there by induction on dimension $n,$ as
\[
\partial S=%
{\displaystyle\bigcup\limits_{k=1}^{n}}
\left\{  \theta\ |\ \theta_{1}+\cdots+\widehat{\theta_{k}}+\cdots+\theta
_{n}=\frac{\pi}{2}(n-3),\ |\theta_{i}|\leq\frac{\pi}{2}\right\}  \,
\]
and \ $\Gamma\left(  \theta_{1},\theta_{2}\right)  =0$ for $\theta_{1}%
+\theta_{2}=0.$

\ Further, we show%
\[
\bigtriangleup_{g}\left\vert Du\right\vert \geq0.
\]
We calculate%
\begin{align*}
\bigtriangleup_{g}\left\vert Du\right\vert  &  =\sum_{\alpha,\beta=1}%
^{n}g^{\alpha\beta}\partial_{\alpha\beta}\left\vert Du\right\vert
=\sum_{\alpha,\beta,i=1}^{n}g^{\alpha\beta}\left(  \frac{u_{i}u_{i\beta\alpha
}}{\left\vert Du\right\vert }+\frac{u_{i\alpha}u_{i\beta}}{\left\vert
Du\right\vert }-\sum_{j=1}^{n}\frac{u_{i}u_{i\beta}u_{j}u_{j\alpha}%
}{\left\vert Du\right\vert ^{3}}\right) \\
&  =\sum_{\alpha,\beta,i=1}^{n}g^{\alpha\beta}\left(  \frac{u_{i\alpha
}u_{i\beta}}{\left\vert Du\right\vert }-\sum_{j=1}^{n}\frac{u_{i}u_{i\beta
}u_{j}u_{j\alpha}}{\left\vert Du\right\vert ^{3}}\right) \\
&  \overset{D^{2}u\ \text{{\small is\ diagonal}}}{=}\sum_{\alpha=1}%
^{n}g^{\alpha\alpha}\frac{\left(  \left\vert Du\right\vert ^{2}-u_{\alpha}%
^{2}\right)  \lambda_{\alpha}^{2}}{\left\vert Du\right\vert ^{3}}\geq0,
\end{align*}
where we used the minimality equation (\ref{Emin}).

\ Combining the subharmonicity of $u$ and $\left\vert Du\right\vert $ with
(\ref{Inequ g^nn}) and (\ref{E critical gradient}), we have at $x^{\ast}$%
\begin{align*}
0  &  \geq\bigtriangleup_{g}w=\left\vert Du\right\vert \bigtriangleup_{g}%
\eta+2\sum_{\alpha=1}^{n}g^{\alpha\alpha}\eta_{\alpha}\left\vert Du\right\vert
_{\alpha}+\underset{\geq0}{\underbrace{\eta\bigtriangleup_{g}\left\vert
Du\right\vert +2Au\bigtriangleup_{g}u}}+2A\sum_{\alpha=1}^{n}g^{\alpha\alpha
}u_{\alpha}^{2}\\
&  \geq\left\vert Du\right\vert \bigtriangleup_{g}\eta+2\sum_{\alpha=1}%
^{n}g^{\alpha\alpha}\eta_{\alpha}\underline{\left\vert Du\right\vert _{\alpha
}}+2A\sum_{\alpha=1}^{n}g^{\alpha\alpha}u_{\alpha}^{2}\\
&  \geq-2ng^{nn}\left\vert Du\right\vert -2\sum_{\alpha=1}^{n}g^{\alpha\alpha
}\eta_{\alpha}\left(  \frac{\eta_{\alpha}\left\vert Du\right\vert
+2Auu_{\alpha}}{\eta}\right)  +\frac{2A}{n}g^{nn}\left\vert Du\right\vert
^{2}\\
&  \geq-2ng^{nn}\left\vert Du\right\vert -4g^{nn}\frac{\left\vert
Du\right\vert }{\eta}-8g^{nn}Au\frac{\left\vert Du\right\vert }{\eta}%
+\frac{2A}{n}g^{nn}\left\vert Du\right\vert ^{2};
\end{align*}
It follows that
\[
0\geq-2n\eta-4-8Au+\frac{2A}{n}\eta\left\vert Du\right\vert .
\]
Then by the assumption $M\leq u\leq2M$ and $A=n/M$
\[
\eta\left\vert Du\right\vert \left(  x^{\ast}\right)  \leq\left(
n+2+8n\right)  M.
\]
So we obtain
\begin{equation}
\left\vert Du\left(  0\right)  \right\vert \leq w\left(  x^{\ast}\right)
\leq15nM. \label{gradient estimate critical phase}%
\end{equation}

\textbf{Case }$\Theta>\left(  n-2\right)  \pi/2.$ Let $\Theta=\delta+\left(
n-2\right)  \pi/2.$ From our special Lagrangian equation (\ref{EsLag}), we
know
\[
\theta_{i}+\left(  n-1\right)  \frac{\pi}{2}>\left(  n-2\right)  \frac{\pi}%
{2}+\delta\ \ \text{or \ }\theta_{i}>-\frac{\pi}{2}+\delta.
\]
We can control the gradient of the convex function $u\left(  x\right)
+\frac{1}{2}\max\left\{  \cot\delta,0\right\}  x^{2}$ by its oscillation,
thus
\begin{equation}
\left\vert Du\left(  0\right)  \right\vert \leq\operatorname*{osc}_{B_{1}%
}+\frac{1}{2}\max\left\{  \cot\delta,0\right\}  . \label{Rough bound}%
\end{equation}
In order to get rid of the $\delta$-dependence in the gradient estimate, we
need the following.

\begin{proposition}
Let smooth $u$ satisfy (\ref{EsLag}) with $\Theta-\left(  n-1\right)
\frac{\pi}{2}=\delta\in(0,\pi/4)$ on $B_{2}(0).$ Suppose that%
\begin{equation}
\operatorname*{osc}_{B_{2}}u\leq\frac{1}{2\sin\delta}. \label{osc condition}%
\end{equation}
Then
\[
\left\vert Du(0)\right\vert \leq C(n)\left(  \operatorname*{osc}_{B_{2}%
}u+1\right)  .
\]

\end{proposition}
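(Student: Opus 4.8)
The plan is to reduce the estimate to the critical phase $(n-2)\pi/2$, handled in the previous case via \eqref{gradient estimate critical phase}, by means of the Lewy rotation of Proposition~\ref{Prop rotation}, and to use \eqref{osc condition} to control the distortion the rotation introduces.

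First I would record the elementary consequences of \eqref{osc condition}. From \eqref{EsLag}, $\theta_i>\Theta-(n-1)\tfrac\pi2=\delta-\tfrac\pi2$, so $\lambda_i>-\cot\delta$ and $v:=u+\tfrac12\cot\delta\,|x|^2$ is convex on $B_2$ with $\operatorname*{osc}_{B_2}v\le\operatorname*{osc}_{B_2}u+2\cot\delta\le C/\sin\delta$; hence by the interior gradient estimate for convex functions $\sup_{B_{3/2}}|Du|\le C/\sin\delta$, and \eqref{Rough bound} together with \eqref{osc condition} also bounds $|Du(0)|$, and therefore $|\bar x(0)|=|Du(0)|\sin(\delta/n)$, by a dimensional constant.

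Now apply Proposition~\ref{Prop rotation} with $R=2$: the rotation $\bar z=e^{-\sqrt{-1}\,\delta/n}z$ turns $\mathfrak M=(x,Du(x))$, $x\in B_2$, into the gradient graph $(\bar x,D\bar u(\bar x))$ of a potential $\bar u$ solving \eqref{EsLag} with phase $(n-2)\tfrac\pi2$ on $\bar\Omega_2:=\bar x(B_2)$. Because $v$ is convex, the estimate \eqref{lbonrad} in that proof sharpens to $|\bar x(x)-\bar x(y)|\ge\frac{\sin((n-1)\delta/n)}{\sin\delta}|x-y|\ge\frac{n-1}{n}|x-y|$, so $\bar x(\cdot)$ is a bi-Lipschitz homeomorphism of each $B_r$ onto $\bar\Omega_r$, with $\bar\Omega_r\supset B_{r(n-1)/n}(\bar x(0))$. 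Evaluating the parametrization at $x=0$ gives $D\bar u(\bar x(0))=\cos(\delta/n)\,Du(0)$, so $|Du(0)|\le C(n)|D\bar u(\bar x(0))|$. Applying \eqref{gradient estimate critical phase} to $\bar u$ on a ball $B_\rho(\bar x(0))\subset\bar\Omega_2$ with $\rho\asymp1$ (after translating its center to the origin and rescaling to a unit ball) then gives
\[
|Du(0)|\le\frac{C(n)}{\rho}\operatorname*{osc}_{B_\rho(\bar x(0))}\bar u .
\]

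It remains to bound $\operatorname*{osc}_{B_\rho(\bar x(0))}\bar u$ by $C(n)(\operatorname*{osc}_{B_2}u+1)$, which is the heart of the matter. Since the rotation is unitary, hence symplectic and isometric, expressing $\sum_i\bar y_i\,d\bar x_i=d\bar u$ in terms of $x$, $u(x)$, $Du(x)$ and integrating gives the closed formula
\[
\bar u(\bar x)=u(x)+\tfrac12\tan(\delta/n)\big(|D\bar u(\bar x)|^2-|x|^2\big)+\mathrm{const},\qquad x=x(\bar x),
\]
with $D\bar u(\bar x)=\cos(\delta/n)\,Du(x)-\sin(\delta/n)\,x$. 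On the pullback of $B_\rho(\bar x(0))$, which lies in a fixed–size ball about the origin in $x$–space, the first summand contributes at most $\operatorname*{osc}_{B_2}u$; for the correction term one aims to use the convexity gradient bound $|Du|\le C/\sin\delta$ of the first step, with the small rotation angle $\delta/n$ cancelling the $1/\sin\delta$, so as to bound it by $C(n)(\operatorname*{osc}_{B_2}u+1)$. Combining the last three displays then yields the proposition.

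The main obstacle is precisely this oscillation estimate: organized carelessly the correction term leaves a contribution of order $1/(n\delta)$ in $\operatorname*{osc}_{B_\rho(\bar x(0))}\bar u$, and one must choose $\rho$ and arrange the bookkeeping so that \eqref{osc condition} is exactly the smallness used to absorb it — reflecting the geometric fact that a solution on $B_2$ whose oscillation is that small cannot be steep near the center, so that $|Du(0)|$ stays comparable to $\operatorname*{osc}_{B_2}u+1$.
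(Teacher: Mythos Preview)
Your overall strategy matches the paper: Lewy-rotate to the critical phase, apply \eqref{gradient estimate critical phase} to $\bar u$, and reduce everything to bounding $\operatorname*{osc}_{\bar B_\rho}\bar u$ by $C(n)(\operatorname*{osc}_{B_2}u+1)$.  Your closed formula
\[
\bar u(\bar x)=u(x)+\tfrac12\tan(\delta/n)\bigl(|D\bar u(\bar x)|^2-|x|^2\bigr)+\mathrm{const}
\]
is correct.  The gap is exactly where you flag it: that formula forces you to control $\tan(\delta/n)\,\operatorname*{osc}|D\bar u|^2$, and the only bound you have on the gradient is the semiconvexity estimate $|Du|\le C(\operatorname*{osc}_{B_2}u+\cot\delta)$.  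Squaring and multiplying by $\tan(\delta/n)$ leaves a term of order $1/(n\delta)$ coming from the $\cot^2\delta$ piece, and this term is \emph{not} absorbed by $C(\operatorname*{osc}_{B_2}u+1)$ when $\operatorname*{osc}_{B_2}u$ is bounded while $\delta\to0$; condition \eqref{osc condition} only caps $\operatorname*{osc}_{B_2}u$ from above, it does not make it large.  So the formula route, as written, recovers nothing better than the rough bound \eqref{Rough bound}.

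The paper sidesteps the squared gradient entirely by a different device for the oscillation of $\bar u$.  After arranging that the maximum of $|\bar u|$ on $\bar B_1$ occurs along the positive $\bar x_1$-axis, it writes $\operatorname*{osc}_{\bar B_1}\bar u\le 2\bigl|\int_0^1\bar u_{\bar x_1}\,d\bar x_1\bigr|$ and reads this integral as the signed area, in the $x_1$--$y_1$ plane, between the curve $(x_1,u_{x_1}(x_1))$ and the rotated axis $y_1=\tan(\delta/n)x_1$, between two lines $l_0,l_1$ perpendicular to the $\bar x_1$-axis.  Condition \eqref{osc condition} is used exactly once, in \eqref{kick}, to guarantee that the projections of $l_0,l_1$ and of their intersections with the curve all lie in $(-2,2)$.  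The signed area then converts to $\int u_{x_1}\,dx_1$ over an interval in $(-2,2)$ plus two boundary pieces of the same type, each bounded by $\operatorname*{osc}_{B_2}u+C(n)$ because an integral of a partial derivative along a segment is a difference of values of $u$.  This produces the oscillation bound linearly in $\operatorname*{osc}_{B_2}u$, with no $|Du|^2$ and hence no $1/\delta$ loss.  That geometric conversion is the missing idea in your sketch.
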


\begin{proof}
We take the Lewy rotation in the proof of Proposition \ref{Prop rotation}, to
obtain a \textquotedblleft critical\textquotedblright\ representation
$\mathfrak{M}=\left(  \bar{x},D\bar{u}\left(  \bar{x}\right)  \right)  $ for
the original special Lagrangian graph $\mathfrak{M}=(x,Du(x))$ with $x\in
B_{2}.$ Recentering the new coordinates, we take
\begin{equation}
\left\{
\begin{array}
[c]{l}%
\bar{x}=x\cos\frac{\delta}{n}+Du\left(  x\right)  \sin\frac{\delta}%
{n}-Du\left(  0\right)  \sin\frac{\delta}{n}\\
D\bar{u}\left(  \bar{x}\right)  =-x\sin\frac{\delta}{n}+Du\left(  x\right)
\cos\frac{\delta}{n}%
\end{array}
\right.  . \label{critical'}%
\end{equation}
By (\ref{lbonrad}) we see that the potential $\bar{u}$ is defined on a ball in
$\bar{x}$-space around the origin of radius
\[
\bar{R}=\frac{2}{2\cos(\frac{\delta}{n})}>1.
\]
From (\ref{critical'}) and the estimate
(\ref{gradient estimate critical phase}) for the critical potential, we have%
\[
\left\vert Du(0)\right\vert =\frac{\left\vert D\bar{u}(\bar{0})\right\vert
}{\cos(\delta/n)}\leq C\left(  n\right)  \operatorname*{osc}_{\bar{B}_{1}}%
\bar{u}.
\]

Next,we estimate the oscillation of $\bar{u}$ in terms of $u.$ We may assume
that $\bar{u}(\bar{0})=0.$ Without loss of generality we assume the maximum of
$|\bar{u}|$ on $\bar{B}_{1}(\bar{0})$ happens along the positive $\bar{x}_{1}%
$-axis, and even on the boundary $\partial\bar{B}_{1}.$ Thus we have
\[
\operatorname*{osc}_{\bar{B}_{1}}\bar{u}\leq2\left\vert \int_{\bar{x}_{1}%
=0}^{\bar{x}_{1}=1}\bar{u}_{\bar{x}_{1}}d\bar{x}_{1}\right\vert .
\]
In the following, we convert the integral of $\bar{u}_{\bar{x}_{1}}$ to one in
terms of $u_{x_{1}},$ then recover the oscillation of $\bar{u}$ from that of
$u.$

We work on the $x_{1}$-$y_{1}$ plane in the remaining of the proof. Under our
above assumption, the $\bar{x}_{1}$-axis is given by the line
\[
y_{1}=\tan\left(  \frac{\delta}{n}\right)  x_{1}%
\]
and the curve $\gamma:(x_{1},u_{1}(x_{1}))$ with $\left\vert x_{1}\right\vert
\,<2$ forms a graph over the $\bar{x}_{1}$-axis. \ Let $l_{0}$ be the line
perpendicular to the $\bar{x}_{1}$-axis and intersecting the curve $\gamma$ at
$\left(  0,u_{1}\left(  0\right)  \right)  $ along the $y_{1}$-axis. The
intersection of $l_{0}$ and the $\bar{x}_{1}$-axis (which is also the origin
of the recentered the $\bar{x}_{1}$-$\bar{y}_{1}$ plane) has distance to the
origin of the $x_{1}$-$y_{1}$ plane given by
\begin{equation}
\left\vert u_{1}\left(  0\right)  \right\vert \sin\left(  \frac{\delta}%
{n}\right)  \leq\left(  \operatorname*{osc}_{B_{1}}u+\frac{1}{2}\cot
\delta\right)  \sin\left(  \frac{\delta}{n}\right)  \leq1 \label{kick}%
\end{equation}
by the rough bound (\ref{Rough bound}) and the condition (\ref{osc condition}%
). Now let $l_{1}$ be the line parallel to $l_{0}$ passing through the point
$\bar{x}_{1}=1$ along the $\bar{x}_{1}$-axis.

The integral
\[
\int_{\bar{x}_{1}=0}^{\bar{x}_{1}=1}\bar{u}_{\bar{x}_{1}}d\bar{x}_{1}%
\]
is the signed area between the $\bar{x}_{1}$-axis and the curve $\gamma,$ and
lying between the lines $l_{0}$ and $l_{1}.$ We convert this to an integral
over $x_{1},$
\[
\int_{\bar{x}_{1}=0}^{\bar{x}_{1}=1}\bar{u}_{\bar{x}_{1}}d\bar{x}_{1}%
=\int_{P(l_{0}\cap\bar{x}_{1}\text{-axis})}^{P(l_{1}\cap\bar{x}_{1}%
\text{-axis})}\left[  u_{1}(x_{1})-\tan\left(  \frac{\delta}{n}\right)
x_{1}\right]  dx_{1}+K_{0}+K_{1},
\]
where $P$ denotes projection to the $x_{1}$-axis, and $K_{0}\ $as well as
$K_{1}$ denotes the signed areas to the left or right of the desired region,
forming the difference.

It is important to note the following for $j=1,2:$

(i)$\ P(l_{j}\cap\bar{x}_{1}$-axis$)\ $is in the $x_{1}$-domain of $u_{1}$ by
(\ref{kick}),%
\begin{align*}
\left\vert P(l_{0}\cap\bar{x}_{1}\text{-axis})\right\vert  &  \leq1\cdot
\cos\left(  \frac{\delta}{n}\right)  <1,\\
\left\vert P(l_{1}\cap\bar{x}_{1}\text{-axis})\right\vert  &  \leq\left(
1+1\right)  \cdot\cos\left(  \frac{\delta}{n}\right)  <2;
\end{align*}
(ii) $P(l_{j}\cap\gamma)\ $is also in the $x_{1}$-domain of $u_{1}$ as the
whole Lagrangian surface $\mathfrak{M}$ is a graph over $B_{2},$%
\[
\left\vert P(l_{j}\cap\gamma)\right\vert \leq2;
\]
(iii) the region $K_{j}$ is bounded by the line $l_{j},$ the vertical line
$x_{1}=P(l_{j}\cap\bar{x}_{1}$-axis$),$ and the curve $\gamma,$ also each
region $K_{j}$ is on one side of the $\bar{x}_{1}$-axis.

Thus from (i)
\[
\left\vert \int_{P(l_{0}\cap\bar{x}_{1}\text{-axis})}^{P(l_{1}\cap\bar{x}%
_{1}\text{-axis})}\left[  u_{1}(x_{1})-\tan\left(  \frac{\delta}{n}\right)
x_{1}\right]  dx_{1}\right\vert \leq\operatorname*{osc}_{B_{2}}u+C(n)
\]
and from (ii) (iii)%
\[
|K_{j}|\leq\left\vert \int_{P(l_{j}\cap\bar{x}_{1}\text{-axis})}^{P\left[
l_{j}\cap\gamma\right]  }\left[  u_{1}(x_{1})-\tan(\frac{\delta}{n}%
)x_{1}\right]  dx_{1}\right\vert \leq\operatorname*{osc}_{B_{2}}u+C(n).
\]
It follows that we have the conclusion of Proposition 5.1
\[
\left\vert Du(0)\right\vert \leq C\left(  n\right)  \operatorname*{osc}%
_{\bar{B}_{1}}\bar{u}\leq C\left(  n\right)  \left(  \operatorname*{osc}%
_{B_{2}}u+1\right)  .
\]

\end{proof}

We finish the proof of Theorem 1.3. \ For $\delta\geq\pi/4,$ the bound
(\ref{Rough bound}) gives
\[
\left\vert Du\left(  0\right)  \right\vert \leq\operatorname*{osc}_{B_{1}%
}u+\frac{1}{2}\leq C(n)\left[  \operatorname*{osc}_{B_{2}}u+1\right]  .
\]
For $\delta\leq\pi/4,$ if $\operatorname*{osc}_{B_{2}}u\leq1/\left(
2\sin\delta\right)  ,$ then Proposition 4.1 gives
\[
\left\vert Du(0)\right\vert \leq C(n)\left[  \operatorname*{osc}_{B_{2}%
}u+1\right]  .
\]
Otherwise, $\ \operatorname*{osc}_{B_{2}}u>1/\left(  2\sin\delta\right)  ,$
and from (\ref{Rough bound})
\[
\left\vert Du\left(  0\right)  \right\vert \leq\operatorname*{osc}_{B_{1}%
}u+\operatorname*{osc}_{B_{2}}u\leq C(n)\left[  \operatorname*{osc}_{B_{2}%
}u+1\right]  .
\]
Applying this estimate on $B_{2}(x)$ for any $x\in B_{1}(0),$ we arrive at the
conclusion of Theorem 1.3.

\bigskip

\end{document}